\theoremstyle{plain}
\newtheorem{thm}{Theorem}[section]
\newtheorem{cor}[thm]{Corollary}
\newtheorem{pro}[thm]{Proposition}
\newtheorem{lem}[thm]{Lemma}
\newtheorem{rem}[thm]{Remark}
\newtheorem*{ex*}{Example}
\newcommand{\N}{\mathbb{N}}
\def\P{\mathcal P}
\def\m{\mu}
\def\J{\mathcal J}
\def\eps{\varepsilon}
\def\a{\alpha}
\def\b{\beta}
\def\ab{\alpha,\beta}
\def\t{\theta}
\def\vp{\varphi}
\def\st{\sin \frac{\theta}{2}}
\def\svp{\sin \frac{\varphi}{2}}
\def\ct{\cos \frac{\theta}{2}}
\def\cvp{\cos \frac{\varphi}{2}}
\def\sht{\sinh \frac{t}{2}}
\def\cht{\cosh \frac{t}{2}}
\def\Puv{\Psi^{\alpha,\beta}(t,\theta,\varphi,u,v)}
\def\pia{d\Pi_{\alpha}(u)}
\def\pib{d\Pi_{\beta}(v)}
\def\piaa{d\Pi_{\alpha+1}(u)}
\def\pibb{d\Pi_{\beta+1}(v)}
\def\piu{d\Pi_{-1\slash 2}(u)}
\def\piv{d\Pi_{-1\slash 2}(v)}
\def\oa{\Pi_{\alpha}(u)\,du}
\def\ob{\Pi_{\beta}(v)\,dv}
\def\q{\mathfrak{q}}
\def\v{\varphi}
\DeclareMathOperator{\support}{supp}
\DeclareMathOperator{\spann}{span}
\DeclareMathOperator{\spec}{spec}
\DeclareMathOperator{\pv}{P.V.}
\DeclareMathOperator{\id}{Id}
\DeclareMathOperator{\sgn}{sgn}
\DeclareMathOperator*{\essup}{ess\,sup}
\def\Ptilde{\widetilde{\Psi}^{\lambda}(t,\mathfrak{q})}
\def\piK{d\Pi_{\alpha, K}(u)}
\def\piR{d\Pi_{\beta, R}(v)}
\def\abpiK{\Pi_{\alpha, K}}
\def\lam{\lambda}
\def\Leb1{L^1(d\mu_{\alpha,\beta})}
\def\Lebr1{L^1((0,\pi/4),d\mu_{\alpha,\beta})}
\title[{Riesz-Jacobi transforms as $\pv$ integrals}]
	{Riesz-Jacobi transforms as principal value integrals}
\author[A.J. Castro]{Alejandro J. Castro}
\address{Alejandro J. Castro \newline
		Departamento de An\'alisis Matem\'atico, 
		Universidad de La Laguna \newline
		Campus de Anchieta, Avda. Astrof\'{\i}sico Francisco S\'anchez, s/n, \newline
		38271 La Laguna (Sta. Cruz de Tenerife), Spain
		}
\email{ajcastro@ull.es}
\author[A. Nowak]{Adam Nowak}
\address{Adam Nowak, \newline
           Institute of Mathematics,
       Polish Academy of Sciences, \newline
      \'Sniadeckich 8,
      00--656 Warszawa, Poland
      }
\email{anowak@impan.pl}
\author[T.Z. Szarek]{Tomasz Z. Szarek}
\address{Tomasz Z. Szarek,     \newline
           Institute of Mathematics,
       Polish Academy of Sciences, \newline
      \'Sniadeckich 8,
      00--656 Warszawa, Poland
      }
\email{szarektomaszz@gmail.com}
\begin{document}

\begin{abstract}
We establish an integral representation for the Riesz transforms naturally 
associated with classical Jacobi expansions.
We prove that the Riesz-Jacobi transforms of odd orders express as principal value integrals against
kernels having non-integrable singularities on the diagonal. On the other hand, we show that the Riesz-Jacobi
transforms of even orders are not singular operators. In fact they are given as usual integrals against
integrable kernels plus or minus, depending on the order, the identity operator.
Our analysis indicates that similar results, existing in the literature and corresponding to several
other settings related to classical discrete and continuous orthogonal expansions, should be reinvestigated
so as to be refined and in some cases also corrected.
\end{abstract}

\maketitle

\footnotetext{
\emph{\noindent 2010 Mathematics Subject Classification:} 42C99 \\
\emph{Key words and phrases:} Jacobi expansion, Jacobi operator, Riesz transform, 
	integral representation, principal value integral.\\
\indent	
The first-named author was partially supported by MTM2010/17974 and also by an FPU grant 
from the Government of Spain.
Research of the second-named author was partially supported by the National Science Centre of Poland,
project no.\ 2013/09/B/ST1/02057.
The third-named author was partially supported by the National Science Centre of Poland, 
project no.\ 2012/05/N/ST1/02746.
}

\section{Introduction} \label{sec:intro}

The classical Riesz transforms in $\mathbb{R}^n$, $n \ge 1$, are formally given by
$$
R_j = \partial_j (-\Delta)^{-1/2}, \qquad j =1,\ldots,n.
$$
These identities have a strict meaning when understood in the sense of the Fourier transform and thus define
the Fourier multipliers
$$
\widehat{R_jf}(\xi) = i \frac{\xi_j}{|\xi|} \hat{f}(\xi), \qquad \textrm{where} \quad 
	\hat{f}(\xi) = \int_{\mathbb{R}^n} f(x) \, e^{-2\pi i \langle x, \xi\rangle} \, dx.
$$
It is well known that the $R_j$, $j=1,\ldots,n$, possess the singular integral representation
$$
R_jf(x) = \frac{\Gamma(\frac{n+1}2)}{\pi^{\frac{n+1}2}} 
	\pv \int_{\mathbb{R}^n} \frac{y_j-x_j}{|y-x|^{n+1}} f(y) \, dy
$$
in $L^p(\mathbb{R}^n)$, $1 \le p < \infty$. The last integral does not make the usual sense for 
$x \in \support f$, because of the non-integrable kernel singularity along the diagonal. 
But it exists in the principal value sense thanks to subtle cancellations around $y=x$.
An important special case is $n=1$ and the Hilbert transform
$$
Hf(x) = \frac{1}{\pi} \pv \int_{-\infty}^{\infty} \frac{f(y)}{y-x} \, dy.
$$
Note that
\begin{equation} \label{hid}
H^{2k} = (-1)^k \id, \qquad H^{2k-1} = (-1)^{k+1}H, \qquad k \ge 1.
\end{equation}
Another classical example of a singular integral operator is the conjugate function mapping
on the torus,
$$
\mathcal{C} \colon f \mapsto \sum_{k \in \mathbb{Z}} i \sgn(k) \hat{f}(k)\, e^{2\pi i k x},
\qquad \textrm{where} \quad \hat{f}(k) = \int_0^1 f(x) e^{-2\pi i k x}\, dx.
$$
On the subspace of $L^2(0,1)$ of functions having vanishing mean value, this can be written in a compact way
as $\mathcal{C} = \frac{d}{dx}(-\Delta)^{-1/2}$ and it is easy to check that 
identities analogous to \eqref{hid} hold. Moreover, in $L^1(0,1)$ we have the integral representation
(cf.\ \cite[Chapter VII]{Z})
$$
\mathcal{C}f(x) = \pv \int_0^1 \cot\big( \pi (y-x)\big) f(y)\, dy.
$$
All the above mentioned operators were intensively studied in the first half of the 20th century.
Then their analogues were defined and investigated in a great variety of contexts.

Riesz transforms related to classical discrete and continuous orthogonal expansions are defined according to
the following general scheme, cf.\ \cite{NoSt0}. For the sake of clarity we restrict here
to one-dimensional discrete expansions. Let $\{\phi_n\}$ be an orthogonal basis in $L^2((a,b),d\mu)$,
$- \infty \le a < b \le \infty$, consisting of eigenfunctions of a `Laplacian' $L$. Typically, 
and thus also here,
$L$ is a symmetric and non-negative in $L^2(d\mu)$ second order differential operator. We assume that
$L$ can be decomposed as $L = \delta^*\delta + c$, where $c\ge 0$ is a constant, $\delta$ is a first
order differential operator, and $\delta^*$ is its formal adjoint in $L^2(d\mu)$. Then $\delta$
is a natural derivative associated with $L$. In these circumstances, the Riesz transform of arbitrary order
$N \ge 1$ is formally defined as
$$
R_N = \delta^N L^{-N/2}.
$$
This identity can be understood strictly in the spectral sense, in some cases after restricting to a
suitable subspace of $L^2(d\mu)$. Usually, it is not hard to associate with $R_N$ an integral kernel
$R_N(x,y)$ so that
$$
R_Nf(x) = \int R_{N}(x,y) f(y)\, d\mu(y), \qquad x \notin \support f,
$$
for suitable $f$. However, the question of deriving an integral representation valid also 
on the support of $f$ is a subtle and complicated matter. Indeed, comparing to the classical case, here
$R_N(x,y)$ is in general a non-convolution kernel expressed only implicitly, often via integrals involving
transcendental special functions or oscillating series. 
Furthermore, typically the `derivative' $\delta$ is not
skew-adjoint and does not commute with $L$, so no direct analogues of \eqref{hid} can be hoped for.
Consequently, higher order Riesz transforms require a distinct analysis.

We now give a heuristic description of the approach proposed in this paper. 
We believe that it is of independent
interest since it applies to a quite general situation covering a number of settings where similar questions
were investigated earlier, as commented in more detail below. Taking into account the decomposition
$L = \delta^{*} \delta + c$ and the fact that $\delta^* = -\delta + R(0)$, 
we infer that $\delta^{2k}$, $k \ge 1$, can be written as
$$
\delta^{2k} = (-1)^k L^{k} + R(2k-1),
$$
where $R(m)$ stands for a generic differential operator of order $m$. Then, formally,
\begin{align*}
R_{2k} & = \delta^{2k} L^{-k} = (-1)^k \id + R(2k-1) L^{-k}, \\
R_{2k+1} & = \delta \delta^{2k} L^{-k} L^{-1/2} = (-1)^k R_1 + R(2k) L^{-k-1/2}.
\end{align*}
These counterparts of \eqref{hid} 
can easily be given a strict meaning on $\spann\{\phi_n\}$. Now, since the order of
$R(2k-1)$ is smaller than that of $L^k$, the operator $R(2k-1)L^{-k}$ is not singular 
in the sense that it corresponds to an integrable kernel and hence should admit
a usual integral representation, and the same for $R(2k)L^{-k-1/2}$. Moreover, the main singularity
of $R_{2k+1}$ is carried by $R_1$, so the study of singular integral representation for $R_N$ is
reduced to the analogous problem for $R_1$. In view of the above, we postulate the following representation
for sufficiently regular $f$:
\begin{align}
R_Nf(x) & = \pv \int R_N(x,y) f(y)\, d\mu(y), \qquad N \;\; \textrm{odd}, \label{Ro} \\
R_Nf(x) & = (-1)^{N/2}f(x) + \int R_N(x,y) f(y)\, d\mu(y), \qquad N \;\; \textrm{even}. \label{Re}
\end{align}
The main objective of this paper is to prove \eqref{Ro} and \eqref{Re}
in the context of classical Jacobi expansions,
see Theorems \ref{thm:main} and \ref{cor:mainmain} in Section \ref{sec:prel}. 
It is remarkable that no $\pv$ is needed to represent
the Riesz transforms of even orders. On the other hand, $\pv$ is absolutely essential in case of odd orders.

We claim that \eqref{Ro} and \eqref{Re} are true in many other particular contexts, including expansions into
Hermite and Laguerre polynomials/functions, and continuous Fourier-Bessel expansions
(contexts of modified and non-modified Hankel transforms). Unfortunately, we are not able to prove
this claim here, since this would in fact require writing a separate paper(s). 
Integral representations of Riesz
transforms in the just mentioned settings can be found in the literature, though not in an optimal form
and not always correct form. To give some concrete examples, let us focus first on the context of Hermite
polynomial expansions (in this situation $L$ is the Ornstein-Uhlenbeck operator). In \cite{U} the Riesz
operators are represented for all $x$ as $R_Nf(x) = \int R_N(x,y) f(y) d\mu(y)$, $N \ge 1$, 
with no $\pv$ involved.
An improved expression $R_Nf(x) = \pv \int R_N(x,y) f(y) d\mu(y)$, $N \ge 1$, can be found in
\cite{FS,P,PS}. But this is still inaccurate, and the same problem recurs in the study of a variant
of $R_N$ \cite{AFS}, in a more general Hermite framework \cite{FHS}, as well as in the setting related to
Laguerre polynomial expansions \cite{FSS}. The correct representation
\begin{equation} \label{Rcor}
R_Nf(x) = a_N f(x) + \pv \int R_N(x,y) f(y) d\mu(y), \qquad N \ge 1,
\end{equation}
for the Riesz-Hermite transforms appears first in \cite{GMST}, though with
unspecified coefficients $a_N$; see also \cite[Section 6]{S}.
In the context of continuous Fourier-Bessel expansions, as well as in the
contexts of Hermite and Laguerre function expansions, the representation \eqref{Rcor} was established
in \cite{BFMR} and \cite{BFRS}, respectively, but with miscalculated coefficients $a_N$. Finally,
\eqref{Rcor} with explicit and correct $a_N$ was derived recently in \cite{BFRT} in the ultraspherical
setting. The latter is a special case of the Jacobi framework investigated in this paper. Apparently,
the fact that the $\pv$ is superfluous in case of even orders was overlooked in the literature.

Our strategy of proving \eqref{Ro} and \eqref{Re} in the Jacobi setting is simpler than that elaborated
in \cite{BFMR,BFRS,BFRT}. Roughly, the main differences are that here we reduce the problem to showing
\eqref{Ro} with $N=1$ and then we verify the principal value integral representation directly,
not via comparing with some other, already known situation. 
Noteworthy, our methods involve very recent techniques and results obtained in the Jacobi setting, see
\cite{L,NR,NoSj,NoSj2,NSS}. On the other hand,
they require elaborating some new technical tools that may be useful elsewhere.
For instance, in Lemma \ref{lem:thetaJP} we obtain quite precise estimates of derivatives of the
Jacobi-Poisson kernel.

Recently, an alternative notion of higher order Riesz transform $\mathcal{R}_N$ associated with $L$ was
proposed in \cite{NoSt}. In some aspects, $\mathcal{R}_N$ seems more natural than $R_N$.
Accordingly, in this paper we study also $\mathcal{R}_n$ in the Jacobi setting and
establish its singular integral representation, which occurs to be analogous to \eqref{Ro} and \eqref{Re}; 
see Theorem \ref{thm:main2}. We take this opportunity to show that $\mathcal{R}_N$ has in general better
mapping properties than $R_N$, see Remark \ref{rem:bndRe} and Proposition \ref{pro:unbound}. 
The latter is a supplementary significant 
result of this paper, which reveals a new and interesting phenomenon.

The paper is organized as follows. In Section \ref{sec:prel} we introduce the Jacobi setting and state
the main results, that is Theorems \ref{thm:main}, \ref{cor:mainmain}, \ref{thm:main2} and Proposition
\ref{pro:unbound}. Section \ref{sec:proof} is devoted to the proofs of Theorems \ref{thm:main},
\ref{cor:mainmain} and \ref{thm:main2}. Some technical results needed in this section are proved
in the subsequent Sections \ref{sec:thetaJP}-\ref{sec:PV0}. 
Appendix contains the proof of Proposition \ref{pro:unbound}.

\subsection*{Notation}
Throughout the paper we use a standard notation consistent with that used in \cite{NoSj,NSS};
we refer there for any unexplained notation or symbols. In what follows, all the principal value
integrals over $(0,\pi)$ are understood according to the equivalence
$$
\pv \int_0^{\pi} K(\t,\v) f(\v)\, d\mu(\v) \equiv 
\lim_{\eps \to 0^+} \int_{0 < \v < \pi,\, |\v-\t|>\eps} K(\t,\v)f(\v)\, d\mu(\v) .
$$
When writing estimates, we will frequently use the notation $X \lesssim Y$ to indicate that
$X \le C Y$ with a positive constant $C$ independent of significant quantities. We shall write
$X \simeq Y$ when simultaneously $X \lesssim Y$ and $Y \lesssim X$.

\section{Preliminaries and statement of results} \label{sec:prel}

As in \cite{NoSj,NoSj2,NSS},
we consider the setting related to expansions into Jacobi trigonometric polynomials.
Let $\ab > -1$. The normalized trigonometric Jacobi polynomials are given by
$$
\mathcal{P}_n^{\ab}(\t) = c_n^{\ab} P_n^{\ab}(\cos\t), \qquad \t \in (0,\pi),
$$
where $c_n^{\ab}$ are normalizing constants, and $P_n^{\ab}$, $n\ge 0$, are the classical
Jacobi polynomials as defined in Szeg\H o's monograph \cite{Sz}. 
The system $\{\mathcal{P}_n^{\ab}: n \ge 0\}$ is an orthonormal basis in $L^2(d\mu_{\ab})$,
where $\mu_{\ab}$ is a measure on the interval $(0,\pi)$ defined by
$$
d\mu_{\ab}(\t) = \Big( \sin\frac{\t}2 \Big)^{2\alpha+1} \Big( \cos\frac{\t}2\Big)^{2\beta+1} d\t.
$$
It consists of eigenfunctions of the Jacobi differential operator
$$
\mathcal{J}^{\ab} = - \frac{d^2}{d\theta^2} - \frac{\alpha-\beta+(\alpha+\beta+1)\cos\theta}{\sin \theta}
    \frac{d}{d\theta} + \tau_{\ab}^2, \qquad \textrm{where} \qquad 
    \tau_{\ab} =  \frac{\alpha+\beta+1}{2}
$$
(notice that $\tau_{\ab}$ may be negative); more precisely,
$$
\mathcal{J}^{\ab} \mathcal{P}_n^{\ab} = \big( n + \tau_{\ab}\big)^2 \mathcal{P}_n^{\ab},
    \qquad n \ge 0.
$$
We shall denote by the same symbol $\mathcal{J}^{\ab}$ the natural self-adjoint extension
in $L^2(d\mu_{\ab})$ whose
spectral resolution is given by the $\mathcal{P}_n^{\ab}$, see \cite[Section 2]{NoSj} for details.
Further, by $H_t^{\ab}(\t,\v)$ we will denote the integral kernel of the Jacobi-Poisson semigroup
$\{\exp(-t(\mathcal{J}^{\ab})^{1/2})\}$,
\begin{equation} \label{d0}
H_t^{\ab}(\t,\v) =
    \sum_{n=0}^{\infty} \exp\big(-t | n + \tau_{\ab}| \big)\P_n^{\ab}(\t) \P_n^{\ab}(\v),
		\qquad t>0, \quad \t,\v \in (0,\pi).
\end{equation}
The last series can be repeatedly differentiated term by term in $t$, $\t$ and $\v$, and hence defines
a smooth function of $(t,\t,\v) \in (0,\infty)\times (0,\pi)\times (0,\pi)$.
There is no satisfactory explicit expression for $H_t^{\ab}(\t,\v)$. Nonetheless, sharp estimates of
this kernel were found recently in \cite[Section 6]{NSS}, see also \cite[Appendix]{NoSj2}. 
Note that for the special choice $\alpha=\beta=\lambda-1/2$ the whole situation becomes the ultraspherical
setting with parameter $\lambda$ investigated in \cite{BFRT} and many other papers.

The Riesz-Jacobi transform $R_N^{\ab}$ of order $N \ge 1$ is formally defined by (cf. \cite{NoSj,NSS})
\begin{equation} \label{d1}
R_N^{\ab} \colon f \mapsto \partial^N \big(\mathcal{J}^{\ab}\big)^{-N/2}f.
\end{equation}
Here $\partial$ is the usual derivative, and its relevance is motivated by the factorization
$$
\J^{\ab} = \delta^*\delta + \tau_{\ab}^2, 
$$
where $\delta= \partial$ and
$\delta^{*} = -\delta - (\alpha+1/2)\cot\frac{\t}2+(\beta+1/2)\tan\frac{\t}2$
is the formal adjoint of $\delta$ in $L^2(d\m_{\ab})$.
It is known that replacing $\partial = \delta$ by $\delta^*$ in \eqref{d1} 
is not appropriate since even for $N=1$ 
this would lead to operators mapping outside $L^2(d\mu_{\ab})$; see \cite[Remark 2.6]{NoSj}.

We now focus on understanding \eqref{d1} in a strict way. For $f \in L^2(d\mu_{\ab})$ the negative
power of $\mathcal{J}^{\ab}$ is naturally given by the $L^2(d\mu_{\ab})$-convergent spectral series
\begin{equation} \label{d2}
\big(\mathcal{J}^{\ab}\big)^{-N/2}f = \sum_{n=0}^{\infty} \big|n+\tau_{\ab}\big|^{-N}
    \big\langle f,\P_n^{\ab}\big\rangle_{d\mu_{\ab}} \P_n^{\ab},
\end{equation}
provided that $\tau_{\ab} \neq 0$; otherwise the bottom eigenvalue of $\mathcal{J}^{\ab}$ is $0$ and
\eqref{d2} 
does not make sense. 
For $f \in \spann\{\P_n^{\ab}:n \ge 0\}$
the series terminates and so the sum is in fact finite. In this way \eqref{d1} defines strictly
$R_N^{\ab}$ ($R_N^{\ab}f$ is defined pointwise)
on the dense subspace $\spann\{\P_n^{\ab}:n \ge 0\}$ of $L^2(d\mu_{\ab})$, if only
$\tau_{\ab} \neq 0$. Similarly, when $\tau_{\ab}=0$, \eqref{d1} defines strictly $R_N^{\ab}$
on the subspace $\spann\{\P_n^{\ab}:n \ge 1\}$ of codimension $1$, which is dense in
$\{\P_0^{\ab}\}^{\perp}\subset L^2(d\mu_{\ab})$. To treat uniformly all $\ab > -1$, it is reasonable to
make the convention that, in case $0 \in \spec\mathcal{J}^{\ab}$ (i.e.\ $\tau_{\ab}=0$), before
applying $\big(\mathcal{J}^{\ab}\big)^{-N/2}$ in \eqref{d1} $f$ is projected orthogonally onto
$\{\P_0^{\ab}\}^{\perp}$; for further reference call this projection $\Pi_{0}$.
With this convention, \eqref{d1} defines pointwise $R_N^{\ab}f$ for
$f \in \spann\{\P_n^{\ab}:n \ge 0\}$ and all $\ab >-1$.
Notice that since $\P_0^{\ab}$ is a constant function, actually for all $\ab > -1$ we have
$$
R_N^{\ab}f(\t) = \partial^N \big(\J^{\ab}\big)^{-N/2}\Pi_0 f(\t), 
	\qquad \t \in (0,\pi), \quad f \in \spann\{\P_n^{\ab} : n \ge 0\}.
$$

As was shown in \cite[Section 3]{NoSj}, $R_N^{\ab}$ extends uniquely from $\spann\{\P_n^{\ab}:n \ge 0\}$
to a bounded linear operator on $L^2(d\mu_{\ab})$ given by
\begin{equation} \label{d3}
R_N^{\ab}f = \sum_{n=1}^{\infty} \big|n+\tau_{\ab}\big|^{-N}
    \big\langle f,\P_n^{\ab}\big\rangle_{d\mu_{\ab}} \partial^N\P_n^{\ab},
\end{equation}
the series being convergent in $L^2(d\mu_{\ab})$. Moreover, according to \cite{NoSj,NSS}, $R_N^{\ab}$ is a
Calder\'on-Zygmund operator in the sense of the space of homogeneous type $((0,\pi),d\mu_{\ab},|\cdot|)$.
In particular, it extends uniquely from $L^2(d\m_{\ab})\cap L^p(wd\mu_{\ab})$ to a bounded operator on
$L^p(wd\mu_{\ab})$, $w\in A_p^{\ab}$, $1<p<\infty$, and, when $p=1$,
to a bounded operator from $L^1(wd\mu_{\ab})$ to weak $L^1(wd\mu_{\ab})$, $w \in A_1^{\ab}$.
Here $A_p^{\ab}$ stands for the Muckenhoupt class of weights related to our space
of homogeneous type (see \cite[Section~1]{NoSj} for the definition).
From the above cited papers we also know that $R_N^{\ab}$ is associated with the kernel
\begin{equation} \label{d4}
R_N^{\ab}(\t,\v) = \frac{1}{\Gamma(N)} \int_0^{\infty} \partial_{\t}^{N} H_t^{\ab}(\t,\v) t^{N-1}\, dt,
    \qquad \t,\v \in (0,\pi), \quad \t \neq \v
\end{equation}
(the integral here converges absolutely when $\t\neq \v$) in the sense that for, say, 
$f \in L^{\infty}(0,\pi)$
\begin{equation} \label{d44}
R_N^{\ab}f(\t) = \int_0^{\pi} R_N^{\ab}(\t,\v)f(\v)\, d\mu_{\ab}(\v), \qquad \textrm{a.a.}\;\;
    \t \notin \support f.
\end{equation}
Recall that this kind of association identifies a Calder\'on-Zygmund operator up to a pointwise
multiplication operator. Notice that the integral in \eqref{d44} diverges for $\t \in \support f$
when there is a non-integrable singularity of the kernel $R_N^{\ab}(\t,\v)$ along the diagonal.

Next, we focus on representing pointwise $R_N^{\ab}f$ for sufficiently regular functions,
say $f \in C_c^{\infty}(0,\pi)$
(smooth functions with $\support f \subset (0,\pi)$). Such functions belong to the domain
of $\mathcal{J}^{\ab}$ and are dense in $L^p(wd\mu_{\ab})$,
$w \in A_p^{\ab}$, $1 \le p < \infty$. We will need the following.
\begin{pro} \label{prop:FJdecay}
Let $\ab>-1$ and $f \in C_c^{\infty}(0,\pi)$ be fixed. Given any $M \in \N$, we have
$$
\big| \langle f, \P_n^{\ab}\rangle_{d\mu_{\ab}}\big| \lesssim (n+1)^{-2M}, \qquad n \ge 0.
$$
\end{pro}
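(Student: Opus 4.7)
The plan is to exploit the fact that $\P_n^{\ab}$ is an eigenfunction of the Jacobi differential operator $\J^{\ab}$ with eigenvalue $(n+\tau_{\ab})^2$, together with the formal self-adjointness of $\J^{\ab}$ in $L^2(d\m_{\ab})$. The key observation is that for $f \in C_c^{\infty}(0,\pi)$, the iterated application $(\J^{\ab})^M f$ again belongs to $C_c^{\infty}(0,\pi)$. Indeed, $\J^{\ab}$ is a second order differential operator whose coefficients are smooth on the open interval $(0,\pi)$, so it preserves $C_c^{\infty}(0,\pi)$ — the singular behaviour of the coefficients near $\t=0,\pi$ is harmless because $\support f$ is a compact subset of $(0,\pi)$.

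First I would fix $M\in\N$ and split into two cases according to whether $n+\tau_{\ab}=0$. For $n$ such that $n+\tau_{\ab}\neq 0$ (which excludes at most one index), I would use repeated integration by parts: since $f$ is compactly supported in $(0,\pi)$ all boundary terms vanish, and so
\begin{equation*}
\langle f,\P_n^{\ab}\rangle_{d\m_{\ab}}
= \frac{1}{(n+\tau_{\ab})^{2M}}\,\langle f,(\J^{\ab})^M\P_n^{\ab}\rangle_{d\m_{\ab}}
= \frac{1}{(n+\tau_{\ab})^{2M}}\,\langle (\J^{\ab})^M f,\P_n^{\ab}\rangle_{d\m_{\ab}}.
\end{equation*}
Cauchy--Schwarz combined with $\|\P_n^{\ab}\|_{L^2(d\m_{\ab})}=1$ then yields
\begin{equation*}
\big|\langle f,\P_n^{\ab}\rangle_{d\m_{\ab}}\big|
\le \frac{\|(\J^{\ab})^M f\|_{L^2(d\m_{\ab})}}{|n+\tau_{\ab}|^{2M}},
\end{equation*}
and the numerator is a finite constant depending only on $f$, $M$ and $\ab$, since $(\J^{\ab})^M f$ is continuous with compact support in $(0,\pi)$ and $d\m_{\ab}$ is finite on such sets. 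For all sufficiently large $n$ one has $|n+\tau_{\ab}|\simeq n+1$, producing the desired bound $(n+1)^{-2M}$. The exceptional case $\tau_{\ab}=0$, $n=0$ (and generally the bounded set of small $n$) contributes just finitely many coefficients, each trivially bounded by $\|f\|_{L^1(d\m_{\ab})}\,\|\P_n^{\ab}\|_{\infty}$, and so can be absorbed into the implicit constant.

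The main obstacle, which is actually quite mild, is simply to ensure the integration by parts is valid, i.e.\ that $\J^{\ab}$ can be transferred from $\P_n^{\ab}$ to $f$ without any boundary contribution from $\t=0$ and $\t=\pi$. This is immediate once one writes $\J^{\ab}$ in divergence form with respect to $d\m_{\ab}$, namely $\J^{\ab}g=-\frac{1}{w_{\ab}(\t)}\partial\big(w_{\ab}(\t)\partial g\big)+\tau_{\ab}^2 g$ where $w_{\ab}(\t)=(\sin\frac{\t}{2})^{2\alpha+1}(\cos\frac{\t}{2})^{2\beta+1}$; the compact support of $f$ inside $(0,\pi)$ kills any boundary terms, regardless of the values of $\ab>-1$. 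Iterating this $M$ times finishes the argument.
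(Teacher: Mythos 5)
Your proof is correct and follows essentially the same route as the paper's: transfer $(\J^{\ab})^M$ from the eigenfunction $\P_n^{\ab}$ to $f$ via the symmetry of $\J^{\ab}$ in $L^2(d\m_{\ab})$ and conclude with the Schwarz inequality and $\|\P_n^{\ab}\|_{L^2(d\m_{\ab})}=1$. The extra details you supply --- the divergence form $\J^{\ab}g=-\frac{1}{w_{\ab}}\partial(w_{\ab}\partial g)+\tau_{\ab}^2 g$ justifying the vanishing of boundary terms, and the separate treatment of the at most one index with $n+\tau_{\ab}=0$ together with $|n+\tau_{\ab}|\simeq n+1$ --- are exactly what the paper's terse invocation of ``symmetry'' and ``the conclusion follows'' leaves implicit.
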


\begin{proof}
Using the symmetry of $\mathcal{J}^{\ab}$ and the Schwarz inequality we can write
\begin{align*}
\big|n+\tau_{\ab}\big|^{2M} \big|\big\langle f,\P_n^{\ab}\big\rangle_{d\mu_{\ab}}\big| &
= \big| \big\langle f, (\mathcal{J}^{\ab})^M \P_n^{\ab}\big\rangle_{d\mu_{\ab}}\big|
 = \big|\big\langle (\mathcal{J}^{\ab})^{M}f,\P_n^{\ab}\big\rangle_{d\mu_{\ab}}\big| \\
& \le \big\| (\mathcal{J}^{\ab})^M f\big\|_{L^2(d\mu_{\ab})}.
\end{align*}
The conclusion follows.
\end{proof}
Further, for each $N \ge 0$ we note the estimate
\begin{equation} \label{d5}
\big|\partial_{\t}^{N}\P_n^{\ab}(\t)\big| \lesssim (n+1)^{c}, \qquad \t \in (0,\pi), \quad n \ge 0,
\end{equation}
where $c=c(\alpha,\beta,N)$ can be taken as $\alpha+\beta+2+3N$. This follows from the bound 
(see \cite[Section 2]{NoSj})
\begin{equation} \label{d55}
|\P_n^{\ab}(\t)| \lesssim (n+1)^{\alpha+\beta+2}, \qquad \t \in (0,\pi), \quad n \ge 0,
\end{equation}
and the differentiation rule
\begin{equation} \label{derP}
\partial_{\t} \P_n^{\ab}(\t)
    = -\frac{1}2 \sqrt{n(n+\alpha+\beta+1)} \, \sin\t \,\P_{n-1}^{\alpha+1,\beta+1}(\t), \qquad n \ge 1.
\end{equation}

Combining Proposition \ref{prop:FJdecay} and \eqref{d5} we see that for $f \in C_c^{\infty}(0,\pi)$
the series defining $(\mathcal{J}^{\ab})^{-N/2}f$ in \eqref{d2} converges pointwise and uniformly in
$\t \in (0,\pi)$
(with suitable modification in case $\tau_{\ab}=0$, according to our convention). 
Moreover, term by term differentiation shows that
$(\mathcal{J}^{\ab})^{-N/2}f \in C^{\infty}(0,\pi)$. Similarly, still for $f\in C_c^{\infty}(0,\pi)$,
the series defining $R_N^{\ab}f$ in \eqref{d3} converges pointwise and uniformly in $\t\in (0,\pi)$,
and $R_N^{\ab}f \in C^{\infty}(0,\pi)$. Notice that one can exchange the order of summation and
differentiation in \eqref{d3} if $f \in C_c^{\infty}(0,\pi)$. In particular,
\begin{equation} \label{d6}
R_N^{\ab}f(\t) = \partial_{\t}^N (\mathcal{J}^{\ab})^{-N/2}f(\t),
    \qquad \t \in (0,\pi), \quad f \in C_c^{\infty}(0,\pi),
\end{equation}
where $f$ on the right-hand side must be replaced by $\Pi_0 f$ in case $\tau_{\ab}=0$ (one can of course do the replacement
also in the opposite case).

In \eqref{d6} one can write $(\mathcal{J}^{\ab})^{-N/2}$
(or $(\mathcal{J}^{\ab})^{-N/2}\Pi_0$) as an integral against the
potential kernel (compensated potential kernel).
Indeed, let us first assume that $\tau_{\ab}\neq 0$. With the aid of Proposition
\ref{prop:FJdecay}, \eqref{d55} and Fubini's theorem we see that
\begin{align} \nonumber
\big(\mathcal{J}^{\ab}\big)^{-N/2}f(\t) & = \sum_{n=0}^{\infty} \big|n+\tau_{\ab}\big|^{-N}
    \langle f,\P_n^{\ab}\rangle_{d\mu_{\ab}} \P_n^{\ab}(\t) \\ \nonumber
& = \sum_{n=0}^{\infty} \bigg[ \frac{1}{\Gamma(N)} \int_0^{\infty}
 \exp\big(-t|n+\tau_{\ab}|
    \big) t^{N-1}\, dt \bigg] \, \langle f,\P_n^{\ab}\rangle_{d\mu_{\ab}} \P_n^{\ab}(\t) \\ \nonumber
& = \frac{1}{\Gamma(N)} \int_0^{\infty} \sum_{n=0}^{\infty} \exp\big(-t|n+\tau_{\ab}|
    \big) \langle f,\P_n^{\ab}\rangle_{d\mu_{\ab}} \P_n^{\ab}(\t) \, t^{N-1}\, dt \\ \label{ch1}
& = \frac{1}{\Gamma(N)} \int_0^{\infty} \int_0^{\pi} H_t^{\ab}(\t,\v) f(\v)\, d\mu_{\ab}(\v)\, t^{N-1}\, dt \\ \nonumber
& = \int_0^{\pi} \bigg[\frac{1}{\Gamma(N)} \int_0^{\infty} H_t^{\ab}(\t,\v) t^{N-1}\, dt \bigg]
    \, f(\v)\, d\mu_{\ab}(\v).
\end{align}
The application of Fubini's theorem in the last identity is justified
since $H_t^{\ab}(\t,\v) \ge 0$ and 
\begin{align}\label{iden0}
        \int_0^\pi {H}_t^{\ab}(\t,\v) \, d\mu_{\ab}(\v)
            =
\exp\big(-t(\mathcal{J}^{\ab})^{1/2} \big) \boldsymbol{1} (\t) 
=
 \exp\big(-t | \tau_{\ab} | \big), 
\qquad t>0, \quad \t \in (0,\pi);
\end{align}
here and elsewhere $\boldsymbol{1}$ is the constant function equal to $1$ on $(0,\pi)$. 
Thus we get
\begin{equation} \label{d7}
\big(\mathcal{J}^{\ab}\big)^{-N/2}f(\t) = \int_0^{\pi} K_{N/2}^{\ab}(\t,\v) f(\v)\, d\mu_{\ab}(\v),
    \qquad \t \in (0,\pi), \quad f \in C_c^{\infty}(0,\pi), \quad \tau_{\ab} \neq 0,
\end{equation}
where
$$
K^{\ab}_{\sigma}(\t,\v) = 
	\frac{1}{\Gamma(2\sigma)} \int_0^{\infty} H_t^{\ab}(\t,\v) t^{2\sigma-1}\, dt, 
	\qquad \t,\v \in (0,\pi), \quad	\sigma > 0, \quad
    \tau_{\ab} \neq 0.
$$
Note that sharp estimates for the potential kernel $K_{\sigma}^{\ab}(\t,\v)$, $\sigma>0$, were obtained
recently in \cite[Theorem 2.2]{NR}. Note also that the integral representation \eqref{d7} is valid for
$f\in L^2(d\mu_{\ab})$ (and even more general $f$), see \cite[Section 1]{NR}.

To deal with the case $\tau_{\ab} = 0$ we introduce the compensated Jacobi-Poisson kernel
\begin{equation}\label{Htilde}
\widetilde{H}_t^{\ab}(\t,\v) = H_t^{\ab}(\t,\v) - \exp\big( -t |\tau_{\ab}|\big)
    \P_0^{\ab}(\t) \P_0^{\ab}(\v), \qquad t>0, \quad \t,\v \in (0,\pi),
\end{equation}
which is essentially given by the series in \eqref{d0}, but with summation starting from $n=1$.
Observe that when $\tau_{\ab}=0$ the second term on the right-hand side here is simply a constant equal to
$1/\mu_{\ab}(0,\pi)$.
Also, $\widetilde{H}_t^{\ab}(\t,\v)$ has an exponential (and uniform in $\t$ and $\v$)
decay in $t \to \infty$, as easily seen by analyzing
the corresponding series. Repeating the previous arguments, for all $\ab>-1$ we get
$$
(\mathcal{J}^{\ab})^{-N/2} \Pi_0 f(\t) = \int_0^{\pi} \widetilde{K}_{N/2}^{\ab}(\t,\v)f(\v)\, d\mu_{\ab}(\v),
    \qquad \t \in (0,\pi), \quad f \in C_c^{\infty}(0,\pi),
$$
where
$$
\widetilde{K}_{\sigma}^{\ab}(\t,\v) =
    \frac{1}{\Gamma(2\sigma)} \int_0^{\infty} \widetilde{H}_t^{\ab}(\t,\v) t^{2\sigma-1}\, dt, 
    	\qquad \t,\v \in (0,\pi), \quad \sigma > 0,
$$
is the compensated potential kernel.

Our main results are the following (notice that the case $\tau_{\ab}=0$ 
is not distinguished in the statements).
\begin{thm} \label{thm:main}
Let $\ab > -1$ and $N \ge 1$. Then for each $f \in C_c^{\infty}(0,\pi)$ and all $\t \in (0,\pi)$
\begin{align*} 
R_{N}^{\ab}f(\t) & = \pv \int_0^{\pi} R_{N}^{\ab}(\t,\v) f(\v) \, d\mu_{\ab}(\v), \qquad 
\textrm{$N$ odd},\\ \nonumber
R_{N}^{\ab}f(\t) & = (-1)^{N/2} f(\t) + \int_0^{\pi} R_{N}^{\ab}(\t,\v) f(\v) \, d\mu_{\ab}(\v), \qquad
\textrm{$N$ even}.
\end{align*}
\end{thm}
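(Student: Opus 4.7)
Following the heuristic in the introduction, I would reduce the theorem to the single statement of a principal value representation for $R_1^{\ab}$ via an algebraic manipulation based on the factorization of $\mathcal{J}^{\ab}$, and then verify that statement directly from pointwise estimates on $\partial_{\t} H_t^{\ab}(\t,\v)$. Writing $\mathcal{J}^{\ab} = -\partial^2 + c(\t) \partial + \tau_{\ab}^2$ with $c(\t) = -(\alpha+\tfrac{1}{2})\cot\frac{\t}{2} + (\beta+\tfrac{1}{2})\tan\frac{\t}{2}$, an induction gives
\begin{align*}
\partial^{2k} &= (-1)^k (\mathcal{J}^{\ab})^k + S_{2k-1},\\
\partial^{2k+1} &= (-1)^k \partial\,(\mathcal{J}^{\ab})^k + \partial\,S_{2k-1},
\end{align*}
where $S_m$ denotes a differential operator of order at most $m$ with coefficients smooth on $(0,\pi)$. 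Applying these identities to $u = (\mathcal{J}^{\ab})^{-N/2}\Pi_0 f \in C^\infty(0,\pi)$ from \eqref{d6}, and observing that $(\mathcal{J}^{\ab})^k u = \Pi_0 f$ when $N = 2k$, I obtain
\begin{align*}
R_N^{\ab} f(\t) &= (-1)^{N/2}\Pi_0 f(\t) + S_{N-1} u(\t) \quad\text{for $N$ even},\\
R_N^{\ab} f(\t) &= (-1)^{(N-1)/2} R_1^{\ab} f(\t) + \partial\,S_{N-2}\, u(\t) \quad\text{for $N$ odd}.
\end{align*}
The discrepancy $\Pi_0 f(\t) - f(\t)$ is constant in $\t$ and can be absorbed into the regular kernel term.

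Next I would show that the remainder $T_N := S_{N-1}(\mathcal{J}^{\ab})^{-N/2}\Pi_0$ (respectively $\partial S_{N-2}(\mathcal{J}^{\ab})^{-N/2}\Pi_0$ in the odd case) admits a non-singular integral representation $T_N f(\t) = \int_0^\pi T_N(\t,\v) f(\v)\, d\mu_{\ab}(\v)$ with a kernel locally integrable in $\v$ uniformly in $\t$. After differentiating the semigroup identity \eqref{ch1} under the integral sign, this reduces to the local integrability of $\int_0^\infty |\partial_\t^j H_t^{\ab}(\t,\v)|\, t^{N-1}\,dt$ for $j \le N-1$, which follows from the sharp derivative bounds for the Jacobi-Poisson kernel provided by Lemma \ref{lem:thetaJP} together with the potential-kernel estimates of \cite{NR}. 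The quantitative point is that the differential order acting on the kernel ($N-1$) is strictly smaller than the order of the Riesz potential ($N$), yielding a margin of one order and hence integrability across the diagonal.

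The main obstacle, which is the heart of the argument, is the direct verification of
$$
R_1^{\ab} f(\t) = \pv\int_0^\pi R_1^{\ab}(\t,\v) f(\v)\, d\mu_{\ab}(\v), \qquad f\in C_c^\infty(0,\pi),\ \t\in(0,\pi).
$$
By \eqref{d44} both sides coincide when $\t\notin \support f$. For $\t\in\support f$ I would split the truncated integral by $f(\v) = [f(\v) - f(\t)] + f(\t)$: the first piece converges absolutely as $\eps\to 0^+$ via a Lipschitz-in-$\v$ estimate for $R_1^{\ab}(\t,\v)$ supplied by Lemma \ref{lem:thetaJP}, while the second piece requires identifying the cancellation induced by the odd-in-$(\v-\t)$ leading behavior of $\partial_\t H_t^{\ab}$ along the diagonal, so that $\pv\!\int_0^\pi R_1^{\ab}(\t,\v)\,d\mu_{\ab}(\v)$ exists and attains the value dictated by the normalization $R_1^{\ab}\boldsymbol{1} = 0$. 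Once the PV representation for $R_1^{\ab}$ is in hand, substituting it into the reduction identities and merging with the usual integral representation of $T_N$ yields the claimed formulas; the agreement of the combined kernel with the kernel $R_N^{\ab}(\t,\v)$ of \eqref{d4} follows from \eqref{d44}, which uniquely identifies the kernel off the diagonal.
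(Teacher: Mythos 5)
Your architecture coincides with the paper's own: reduction to $N=1$ via the decompositions $\partial^{2k}=(-1)^k(\J^{\ab})^k+S_{2k-1}$ (the paper's \eqref{dec1}), a usual integral representation for the lower-order remainder through the derivatives $\partial_\t^j K_{N/2}^{\ab}(\t,\v)$ of the potential kernel (the paper's Lemma \ref{lem:Step1}, resting on the Jacobi--Poisson derivative bounds of Lemma \ref{lem:thetaJP}), and, for $N=1$, subtraction of $f(\t)$ using $\int_0^\pi\partial_\t H_t^{\ab}(\t,\v)\,d\mu_{\ab}(\v)=0$ followed by Fubini. Your absorption of the constant $\Pi_0 f-f$ is also how the paper handles $\tau_{\ab}=0$, and your appeal to the a.e.\ kernel identification \eqref{d44} to match the remainder kernel with $R_N^{\ab}(\t,\v)$ can be made to work (both kernels are continuous off the diagonal), although the paper obtains the identification more directly from the harmonicity identity $(\J^{\ab})_\t^m K_m^{\ab}(\t,\v)=0$ proved by integration by parts in $t$ (its \eqref{HarmK}), which in the compensated case additionally produces the constant $-1/\mu_{\ab}(0,\pi)$ matching $\Pi_0 f-f$.

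The genuine gap sits exactly at the point you yourself call the heart of the argument: the existence of $\pv\int_0^\pi R_1^{\ab}(\t,\v)\,d\mu_{\ab}(\v)$ and the determination of its value. Your claim that the value is ``dictated by the normalization $R_1^{\ab}\boldsymbol{1}=0$'' is circular: $\boldsymbol{1}\notin C_c^{\infty}(0,\pi)$, and deducing the value of the principal value from $R_1^{\ab}\boldsymbol{1}=0$ presupposes the very principal value representation being proved, applied moreover to a function outside the class under consideration. Nor does ``odd-in-$(\v-\t)$ leading behavior'' suffice as stated: $R_1^{\ab}(\t,\v)$ is a non-convolution kernel given only implicitly by $\int_0^\infty\partial_\t H_t^{\ab}(\t,\v)\,dt$, and the whole difficulty is to quantify its deviation from oddness against the non-symmetric measure $d\mu_{\ab}$. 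This is the content of Lemma \ref{lem:PV=0}, to which the paper devotes all of Section \ref{sec:PV0}: one passes the truncation limit inside $\int_0^\infty dt$ by dominated convergence, which requires an integrable majorant for the reflected combination $\partial_\t H_t^{\ab}(\t,\v)\mu_{\ab}(\v)+\partial_\t H_t^{\ab}(\t,2\t-\v)\mu_{\ab}(2\t-\v)$; producing it needs the expansion $\partial_\t\q=\frac12\sin\frac{\t-\v}{2}+Q$ with $|Q|\lesssim\q$ (Lemma \ref{lem:derqcomp}), the cancellation estimate of Corollary \ref{cor:---}, and a four-case analysis over the integral representations of $\partial_\t H_t^{\ab}$ from \cite{NSS} according to whether each of $\a,\b$ lies above or below $-1/2$. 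None of this machinery is sketched in your proposal, so the central claim remains unproved. A minor further imprecision: the absolute convergence of $\int_0^\pi R_1^{\ab}(\t,\v)\bigl[f(\v)-f(\t)\bigr]\,d\mu_{\ab}(\v)$ comes from the size bound on the kernel (locally $|R_1^{\ab}(\t,\v)|\lesssim|\t-\v|^{-1}$, obtained by integrating the estimate of Lemma \ref{lem:thetaJP} in $t$) combined with the Lipschitz property of $f$ via the Mean Value Theorem, not from a ``Lipschitz-in-$\v$ estimate for $R_1^{\ab}(\t,\v)$'' as you write.
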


Equipped with Theorem \ref{thm:main}, we establish an analogous representation for the extensions
of $R_N^{\ab}$ that are bounded on $L^p(wd\mu_{\ab})$, $w \in A_p^{\ab}$, $1< p < \infty$,
or from $L^1(wd\mu_{\ab})$ to weak $L^1(wd\mu_{\ab})$, $w \in A_1^{\ab}$. 

\begin{thm}\label{cor:mainmain}
Let $\ab > -1$ and $N \ge 1$. Assume that $1 \le p < \infty$, $w \in A_p^{\ab}$ and $f \in L^p(wd\mu_{\ab})$.
\begin{itemize}
\item[(a)]
If $N$ is odd, then
$$
R_N^{\ab}f(\t) = \pv \int_0^{\pi} R_N^{\ab}(\t,\v) f(\v)\, d\mu_{\ab}(\v), 
	\qquad \textrm{a.a.}\;\; \t \in (0,\pi).
$$
\item[(b)]
If $N$ is even, then
$$
R_N^{\ab}f(\t) = (-1)^{N/2} f(\t) + \int_0^{\pi} R_N^{\ab}(\t,\v)f(\v)\, d\mu_{\ab}(\v),
	\qquad \textrm{a.a.}\;\; \t \in (0,\pi).
$$
\end{itemize}
\end{thm}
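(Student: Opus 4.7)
The plan is a standard density-and-approximation argument: extend the pointwise identities of Theorem~\ref{thm:main} from $C_c^\infty(0,\pi)$ to $L^p(wd\mu_{\ab})$ using the weighted mapping properties of $R_N^{\ab}$ recalled after \eqref{d3}, together with the boundedness of the associated maximal truncation
$$
R_*^{\ab,N}f(\t)=\sup_{\eps>0}\bigg|\int_{\{\v\in(0,\pi):\,|\v-\t|>\eps\}}R_N^{\ab}(\t,\v)f(\v)\,d\mu_{\ab}(\v)\bigg|.
$$
Since $R_N^{\ab}$ is a Calder\'on--Zygmund operator on the space of homogeneous type $((0,\pi),d\mu_{\ab},|\cdot|)$, Cotlar's inequality in this setting yields that $R_*^{\ab,N}$ is bounded on $L^p(wd\mu_{\ab})$ for $w\in A_p^{\ab}$, $1<p<\infty$, and of weak type $(1,1)$ with respect to $wd\mu_{\ab}$ for $w\in A_1^{\ab}$.

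Given $f\in L^p(wd\mu_{\ab})$, choose $f_k\in C_c^\infty(0,\pi)$ with $f_k\to f$ in $L^p(wd\mu_{\ab})$. Writing $T_\eps g(\t)$ for the $\eps$-truncated integral against $R_N^{\ab}(\t,\v)$, linearity gives $T_\eps f=T_\eps(f-f_k)+T_\eps f_k$, and Theorem~\ref{thm:main} ensures that $\lim_{\eps\to 0^+}T_\eps f_k(\t)$ exists for every $\t\in(0,\pi)$. Hence the oscillation
$$
\Omega f(\t):=\limsup_{\eps\to 0^+}T_\eps f(\t)-\liminf_{\eps\to 0^+}T_\eps f(\t)
$$
is dominated by $2R_*^{\ab,N}(f-f_k)(\t)$, which tends to zero in $L^p(wd\mu_{\ab})$ (resp.\ in weak $L^1(wd\mu_{\ab})$) as $k\to\infty$; therefore $\Omega f=0$ a.e., and the principal value $\widetilde Rf(\t):=\lim_{\eps\to 0^+}T_\eps f(\t)$ exists for a.a.\ $\t$. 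Passing to a subsequence (still denoted $f_k$) so that $R_N^{\ab}f_k\to R_N^{\ab}f$ a.e.\ (by $L^p(wd\mu_{\ab})$- or weak-$L^1(wd\mu_{\ab})$-boundedness of $R_N^{\ab}$) and $\widetilde Rf_k\to\widetilde Rf$ a.e.\ (via $|\widetilde R(f_k-f)|\le R_*^{\ab,N}(f_k-f)$ and the bounds above), and invoking Theorem~\ref{thm:main} on each $f_k$, one obtains in the limit
$$
R_N^{\ab}f=\widetilde Rf\;\;\textrm{a.e.}\quad(N\;\textrm{odd}),\qquad R_N^{\ab}f=(-1)^{N/2}f+\widetilde Rf\;\;\textrm{a.e.}\quad(N\;\textrm{even}),
$$
which is (a) and the principal-value version of (b).

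To remove the principal value in (b) I would show that for even $N$ and $f\in L^p(wd\mu_{\ab})$ the integral $\int_0^\pi|R_N^{\ab}(\t,\v)f(\v)|\,d\mu_{\ab}(\v)$ is finite for a.a.\ $\t$, forcing $\widetilde Rf$ to coincide with the ordinary integral. This will be deduced from a pointwise bound on $|R_N^{\ab}(\t,\v)|$ extracted from the proof of Theorem~\ref{thm:main}(b) in Section~\ref{sec:proof}, where the kernel is shown to have only integrable singularity on the diagonal; combined with H\"older's inequality against the Muckenhoupt weight, this yields the required a.e.\ finiteness. The main obstacle is the first step, namely the weighted $L^p$ boundedness of $R_*^{\ab,N}$, which relies on Cotlar's inequality for spaces of homogeneous type together with the CZ kernel estimates for $R_N^{\ab}(\t,\v)$ already available from \cite{NoSj,NSS}; a secondary but non-trivial point is extracting the diagonal integrability of the even-order kernel used at the very end.
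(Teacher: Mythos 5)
Your treatment of part (a) is essentially the paper's: the paper likewise deduces (a) from Theorem~\ref{thm:main} by standard Calder\'on--Zygmund theory, the crucial point being the weighted $L^p(wd\mu_{\ab})$ (resp.\ weak $L^1(wd\mu_{\ab})$) bounds for the truncated-integrals maximal operator, with details delegated to the ultraspherical reference \cite[p.\,515]{BFRT}; your oscillation argument is exactly that scheme. For part (b), however, you diverge from the paper, and your route has a genuine gap at the endpoint $p=1$. The paper never truncates for even $N$: it approximates $f$ by $f_n\in C_c^{\infty}(0,\pi)$ converging in $L^p(wd\mu_{\ab})$ and a.e., applies Theorem~\ref{thm:main} to each $f_n$, and passes to the limit in the \emph{ordinary} integral using Lemma~\ref{lem:weightRiesz}, whose content is that for fixed $\t$ the even-order kernel satisfies $|R_N^{\ab}(\t,\v)|\lesssim 1$ uniformly in $\v$ --- i.e.\ it has \emph{no} singularity on the diagonal at all, not merely an integrable one.

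The gap is in your plan to remove the principal value by "extracting" a pointwise kernel bound from the proof of Theorem~\ref{thm:main}(b) and concluding via H\"older. The estimates actually used there for $f\in C_c^{\infty}(0,\pi)$ (Lemma~\ref{lem:thetaJP} fed through Lemma~\ref{lem:Step1}) control $\partial_{\t}^{j}K_{N/2}^{\ab}(\t,\v)$ for $j\le N-1$ only up to a logarithmic diagonal singularity when $j=N-1$ (the bound $\int_0^1 t^{N}(t+|\t-\v|)^{-N-1}dt\simeq \log(1/|\t-\v|)$). A logarithmic singularity is harmless for $1<p<\infty$, since H\"older against $w^{-p'/p}\in A_{p'}^{\ab}$ plus reverse H\"older absorbs any power of the logarithm; but for $p=1$, $w\in A_1^{\ab}$, your scheme requires $\sup_{\v}|R_N^{\ab}(\t,\v)|/w(\v)<\infty$, and since $A_1$ only yields $1/w\in L^{\infty}$, this forces genuine local boundedness of the kernel itself --- which a log singularity destroys. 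Obtaining that boundedness is precisely the non-trivial content of the paper's Lemma~\ref{lem:thetaK}, proved via the refined odd-derivative estimate of Lemma~\ref{lem:oddthetaH}, which exploits the cancellation $\partial_{\t}\q=\frac{1}{2}\sin\frac{\t-\v}{2}+O(\q)$ (Lemma~\ref{lem:derqcomp}); the paper even remarks explicitly that item (ii) of Lemma~\ref{lem:weightRiesz} "is more subtle and cannot be deduced directly" from the cruder bounds. So as written your argument establishes (b) only for $1<p<\infty$; closing the $p=1$ case needs this additional cancellation analysis, which no step of the proof of Theorem~\ref{thm:main}(b) supplies. (A secondary stylistic point: once the kernel bound is available, the detour through Cotlar's inequality for even $N$ is unnecessary --- the paper's direct limit passage is both shorter and avoids invoking the maximal truncation for part (b).)
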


This extends and refines the ultraspherical result \cite[Theorem 1.1]{BFRT}, where $\a = \b > -1/2$
and $\pv$ is always involved, independently of the order. For the first order ultraspherical
Riesz transform the principal value integral representation was obtained earlier in 
\cite[Theorem 2.13]{Bur1}, for polynomial functions and under a restriction on the ultraspherical
parameter of type. It is worth mentioning that, in the Jacobi context of this paper and with a restriction
on $\a$ and $\b$, \cite{Li} provides a definition of the conjugate function mapping and its
singular integral representation.
The latter is in a sense a kind of the first order Riesz-Jacobi transform, but differs from $R_1^{\ab}$.
Actually, the two operators arise by completely different motivations. The one in \cite{Li} 
goes back to the fundamental work \cite{MuS}, it is related
to the classical Fourier analysis on the torus and refers to connections between Fourier series, analytic
functions and harmonic functions. On the other hand, $R_1^{\ab}$ emerges from the `spectral' perspective
suggested in \cite{topics}, which offers a more natural background for defining higher order Riesz transforms.

As explained in \cite{NoSt}, see also \cite{L}, in some aspects there is a more
natural than $\delta^N$ notion of higher order derivative in the Jacobi setting given by interlacing
$\delta$ and $\delta^{*}$,
$$
{D}^N =
    \underbrace{\ldots \delta \delta^* \delta \delta^* \delta}_{N\; \textrm{components}}.
$$
Accordingly, as in \cite{L} we also consider Riesz-Jacobi transforms defined formally by
\begin{equation} \label{d8}
\mathcal{R}_N^{\ab} \colon f \mapsto  D^N \big(\J^{\ab}\big)^{-N/2}f.
\end{equation}
Similarly as in case of \eqref{d1},
this can easily be understood strictly on $\spann\{\P_n^{\ab}:n \ge 0\}$, with the convention concerning the
case $\tau_{\ab}=0$ in force. Consequently, we are led to operators defined on $L^2(d\mu_{\ab})$ by
$$
\mathcal{R}_N^{\ab}f = \sum_{n=1}^{\infty} \big|n+\tau_{\ab}\big|^{-N} 
	\big\langle f, \P_n^{\ab}\big\rangle_{d\mu_{\ab}}
	D^N\P_n^{\ab}.
$$
The last series indeed converges in $L^2(d\mu_{\ab})$, and $\mathcal{R}_N^{\ab}$, $N \ge 1$,
are bounded linear operators on $L^2(d\mu_{\ab})$, see \cite[Corollary 3.2]{L} and the relevant
arguments in the proof of \cite[Proposition 2.4]{L}.
As we shall see in Proposition \ref{prop:CZRi} below, $\mathcal{R}_N^{\ab}$
extends uniquely to a bounded operator on $L^p(wd\mu_{\ab})$, $w \in A_p^{\ab}$, $1< p < \infty$,
and from $L^1(wd\mu_{\ab})$ to weak $L^1(wd\mu_{\ab})$, $w \in A_1^{\ab}$. For these extensions, we prove
a representation analogous to that from Theorem~\ref{cor:mainmain}. Denote
$$
\mathcal{R}_N^{\ab}(\t,\v) = \frac{1}{\Gamma(N)} \int_0^{\infty} D_{\t}^{N} H_t^{\ab}(\t,\v) t^{N-1}\, dt
$$
(the last integral converges absolutely for $\t \neq \v$, see e.g.\ Lemma~\ref{lem:thetaJP}
in Section~\ref{sec:proof}).
\begin{thm}\label{thm:main2}
Let $\ab > -1$ and $N \ge 1$. Assume that $1 \le p < \infty$, $w \in A_p^{\ab}$ and $f \in L^p(wd\mu_{\ab})$.
\begin{itemize}
\item[(a)]
If $N$ is odd, then
$$
\mathcal{R}_N^{\ab}f(\t) = \pv \int_0^{\pi} \mathcal{R}_N^{\ab}(\t,\v) f(\v)\, d\mu_{\ab}(\v), 
	\qquad \textrm{a.a.}\;\; \t \in (0,\pi).
$$
\item[(b)]
If $N$ is even, then
$$
\mathcal{R}_N^{\ab}f(\t) =  f(\t) + \int_0^{\pi} \mathcal{R}_N^{\ab}(\t,\v)f(\v)\, d\mu_{\ab}(\v),
	\qquad \textrm{a.a.}\;\; \t \in (0,\pi).
$$
\end{itemize}
\end{thm}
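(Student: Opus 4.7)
The approach is to reduce Theorem~\ref{thm:main2} to Theorem~\ref{cor:mainmain} through an algebraic decomposition of the interlacing derivative $D^N$. Since $\delta^{*}\delta = \J^{\ab} - \tau_{\ab}^2$, we have $D^{2k} = (\J^{\ab} - \tau_{\ab}^2)^{k}$ and $D^{2k+1} = \delta\,(\J^{\ab} - \tau_{\ab}^2)^{k}$. Combining the binomial theorem with the spectral definition of $\mathcal{R}_N^{\ab}$ on $\spann\{\P_n^{\ab} : n \ge 0\}$ then yields
\begin{align*}
\mathcal{R}_{2k}^{\ab} f & = \Pi_0 f + \sum_{j=1}^{k} \binom{k}{j}(-\tau_{\ab}^2)^{j}\,(\J^{\ab})^{-j}\Pi_0 f, \\
\mathcal{R}_{2k+1}^{\ab} f & = R_1^{\ab}\bigg(\Pi_0 f + \sum_{j=1}^{k} \binom{k}{j}(-\tau_{\ab}^2)^{j}\,(\J^{\ab})^{-j}\Pi_0 f\bigg),
\end{align*}
initially on $\spann\{\P_n^{\ab}\}$ and then on $C_c^{\infty}(0,\pi)$ by the standard termwise arguments enabled by Proposition~\ref{prop:FJdecay}. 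Notice that the coefficient $+1$ in the statement (contrast with $(-1)^{N/2}$ in Theorem~\ref{cor:mainmain}) arises because $(\delta^{*}\delta)^{k} = (\J^{\ab})^{k} + (\textrm{lower order})$ carries no alternating sign; moreover when $\tau_{\ab}=0$ the correction sums vanish identically.

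With these identities in hand, I would first prove the representation for $f \in C_c^{\infty}(0,\pi)$. Using $\Pi_0 f(\t) = f(\t) - \P_0^{\ab}(\t)\int f \P_0^{\ab}\,d\mu_{\ab}$ and the integral representation of each $(\J^{\ab})^{-j}\Pi_0 f$ via the compensated potential kernel $\widetilde{K}_{j}^{\ab}(\t,\v)$, the even case becomes
$\mathcal{R}_{2k}^{\ab} f(\t) = f(\t) + \int_0^{\pi} M(\t,\v) f(\v)\,d\mu_{\ab}(\v)$,
with $M(\t,\v) = -\P_0^{\ab}(\t)\P_0^{\ab}(\v) + \sum_{j=1}^{k} \binom{k}{j}(-\tau_{\ab}^2)^{j}\widetilde{K}_{j}^{\ab}(\t,\v)$. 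Identifying $M(\t,\v)$ with $\mathcal{R}_{2k}^{\ab}(\t,\v)$ is then accomplished by a Fubini calculation analogous to \eqref{ch1}, applied termwise to the spectral series for $D_{\t}^{2k} H_t^{\ab}(\t,\v)$. For the odd case, write $S_k f = \Pi_0 f + \sum_{j=1}^{k}\binom{k}{j}(-\tau_{\ab}^2)^{j}(\J^{\ab})^{-j}\Pi_0 f$; since $S_k f \in L^p(wd\mu_{\ab})$ for every admissible $(p,w)$, Theorem~\ref{cor:mainmain}(a) applied with $N=1$ yields $\mathcal{R}_{2k+1}^{\ab} f(\t) = R_1^{\ab}(S_k f)(\t) = \pv\!\int_0^{\pi} R_1^{\ab}(\t,\v)\, S_k f(\v)\, d\mu_{\ab}(\v)$. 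Substituting the integral form of $S_k f$ and interchanging the $\pv$ integration in $\v$ with the inner absolutely convergent integrations against the compensated potential kernels produces the stated representation with kernel $\mathcal{R}_{2k+1}^{\ab}(\t,\v)$.

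The passage from $C_c^{\infty}(0,\pi)$ to arbitrary $f \in L^p(wd\mu_{\ab})$ is then carried out by a standard density argument based on Proposition~\ref{prop:CZRi}, which asserts the Calder\'on--Zygmund character of $\mathcal{R}_N^{\ab}$ on the space of homogeneous type $((0,\pi),d\mu_{\ab},|\cdot|)$. Boundedness of $\mathcal{R}_N^{\ab}$ and, for $N$ odd, of its maximal truncation on the relevant weighted spaces transfers the $C_c^\infty$ identity to an a.e.\ pointwise identity, exactly as in the derivation of Theorem~\ref{cor:mainmain} from Theorem~\ref{thm:main}. The principal obstacle is the Fubini identification of $\mathcal{R}_N^{\ab}(\t,\v)$ with the algebraic kernel $M(\t,\v)$ (and its odd-case analogue): this demands sharp, uniform-in-$t$ pointwise bounds on $D_{\t}^N H_t^{\ab}(\t,\v)$ near the diagonal and at both endpoints $t\to 0^{+}$, $t\to\infty$, which are precisely the content of Lemma~\ref{lem:thetaJP}. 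The same estimates also underpin the dominated-convergence step legitimizing the pv-Fubini interchange in the odd case.
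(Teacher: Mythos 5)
Your skeleton is the paper's: you use $D^2=\delta^*\delta=\J^{\ab}-\tau_{\ab}^2$ and the binomial expansion, which gives exactly the decompositions \eqref{decRec}--\eqref{decRoc} (your $\Pi_0$-formulation agrees with them when $\tau_{\ab}\neq 0$ because $\sum_{j=1}^{k}\binom{k}{j}(-1)^j=-1$, and it covers $\tau_{\ab}=0$, where the paper notes $\mathcal{R}_{2m}^{\ab}f=\Pi_0 f$ and $\mathcal{R}_{2m+1}^{\ab}f=R_1^{\ab}f$); the reduction to $N=1$ and the final density passage via Proposition~\ref{prop:CZRi} together with an analogue of Lemma~\ref{lem:weightRiesz} (which the paper derives from \eqref{d9}) are likewise the paper's route. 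However, two of your justifications, as written, have genuine gaps.

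First, in the even case you identify $M(\t,\v)$ with $\mathcal{R}_{2k}^{\ab}(\t,\v)$ by a Fubini computation ``applied termwise to the spectral series for $D_{\t}^{2k}H_t^{\ab}(\t,\v)$''. At the kernel level this is not licensed: the only available size bound \eqref{d55} gives $|\P_n^{\ab}(\t)\P_n^{\ab}(\v)|\lesssim (n+1)^{2(\a+\b+2)}$, so the series $\sum_n|n+\tau_{\ab}|^{-2j}\P_n^{\ab}(\t)\P_n^{\ab}(\v)$ need not converge absolutely and termwise rearrangement fails. The working mechanism --- implicit in the paper's instruction to repeat Step~1 of the proof of Theorem~\ref{thm:main} --- is integration by parts in the subordination variable: one writes $D_\t^{2k}H_t^{\ab}=\sum_{j}\binom{k}{j}(-\tau_{\ab}^2)^j\partial_t^{2(k-j)}H_t^{\ab}$ and, as in \eqref{HermKe}, shows that $\frac{1}{\Gamma(2k)}\int_0^\infty \partial_t^{2(k-j)}H_t^{\ab}(\t,\v)\,t^{2k-1}\,dt$ equals $K_j^{\ab}(\t,\v)$ for $1\le j\le k$ and $0$ for $j=0$ (with the compensated modification when $\tau_{\ab}=0$), the boundary terms being controlled by the estimates from \cite{NSS} cited there. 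Second, in the odd case the identity $\mathcal{R}_{2k+1}^{\ab}f=R_1^{\ab}(S_kf)$ is fine, but the step ``interchanging the $\pv$ integration in $\v$ with the inner absolutely convergent integrations'' is precisely the delicate point and is asserted, not proved: commuting $\lim_{\eps\to 0^+}\int_{|\v-\t|>\eps}$ with the integral against $\widetilde{K}_j^{\ab}(\v,\cdot)$ amounts to a composition identity for a singular kernel with a potential kernel, and dominated convergence does not apply directly near $\v=\t$. The paper avoids this entirely: by Lemma~\ref{lem:Step1} (first-order differentiation, applied at order $2j+1$) each correction $\partial_\t(\J^{\ab})^{-j-1/2}f$ is already an absolutely convergent integral against $\partial_\t K_{j+1/2}^{\ab}(\t,\v)$, so the $\pv$ sits only on the $R_1^{\ab}f$ summand, which is handled by Theorem~\ref{thm:main} (ultimately via Lemma~\ref{lem:PV=0}). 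With these two repairs your argument coincides with the paper's proof.
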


It is remarkable that $\mathcal{R}_N^{\ab}$ possesses in general better mapping properties than
$R_N^{\ab}$, which apparently has not been noticed earlier. This phenomenon is probably best seen
from the $L^1$ behavior. In the next section we will show that $\mathcal{R}_{2k}^{\ab}$, $k \ge 1$,
are bounded on $L^1(d\mu_{\ab})$, see Remark~\ref{rem:bndRe}. 
On the other hand, in Appendix we prove the following.
\begin{pro}\label{pro:unbound}
Let $\ab > -1$, $(\a,\b)\neq (-1/2,-1/2)$. Then $R_2^{\ab}$ is not bounded on $L^1(d\mu_{\ab})$.
\end{pro}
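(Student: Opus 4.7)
The starting point is Theorem \ref{cor:mainmain}(b) with $N=2$, which represents, for $f\in L^1(d\mu_{\ab})$,
$$
R_2^{\ab}f(\t) \;=\; -f(\t) \;+\; Tf(\t), \qquad Tf(\t) \;:=\; \int_0^\pi R_2^{\ab}(\t,\v)\, f(\v)\, d\mu_{\ab}(\v).
$$
Since the identity is trivially bounded on $L^1(d\mu_{\ab})$, $R_2^{\ab}$ acts boundedly on $L^1(d\mu_{\ab})$ if and only if the integral operator $T$ does. The task is thereby reduced to refuting $L^1$-boundedness of $T$.

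The plan is to probe $T$ with an approximate identity concentrated at one of the endpoints of $(0,\pi)$. Exploiting the symmetry $(\a,\b,\t)\mapsto(\b,\a,\pi-\t)$ of the whole setting, we may assume $\a\neq -1/2$ and localize at $\t=0$. Set $f_{\eps}(\v)=\mu_{\ab}(0,\eps)^{-1}\mathbf{1}_{(0,\eps)}(\v)$, so that $\|f_{\eps}\|_{L^1(d\mu_{\ab})}=1$. On any region of $\t$ bounded away from $0$, $Tf_{\eps}(\t)$ converges pointwise to $R_2^{\ab}(\t,0^+)$ as $\eps\to 0^+$. By Fatou, any uniform bound $\|Tf_{\eps}\|_{L^1}\le C$ would force $R_2^{\ab}(\cdot,0^+)\in L^1(d\mu_{\ab})$; the goal is to show that this fails.

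The necessary asymptotics are extracted from the formula $R_2^{\ab}(\t,\v)=\int_0^\infty \partial_\t^2 H_t^{\ab}(\t,\v)\,t\,dt$ together with the sharp Jacobi-Poisson estimates of \cite{NSS} and the refined derivative bounds of Lemma \ref{lem:thetaJP}. The key move is to peel off the leading-order term of $R_2^{\ab}(\t,0^+)$, exhibiting a boundary factor of the form $(\sin\frac{\t}{2})^{-\kappa}$, with $\kappa=\kappa(\a)$, multiplied by a coefficient depending on $\a$ that vanishes exactly when $\a=-1/2$. Pairing this factor with the density $(\sin\frac{\t}{2})^{2\a+1}$ of $d\mu_{\ab}$ near $0$, one then checks that $\mu_{\ab}$-integrability fails for every admissible $\a\neq -1/2$, contradicting $L^1$-boundedness.

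The main obstacle is precisely the last step: the Calder\'on-Zygmund machinery available from \cite{NoSj,NSS} furnishes only \emph{upper} bounds on the kernel, so one has to unpack the Poisson kernel enough to isolate the leading-order term of $R_2^{\ab}(\t,0^+)$ and track the coefficient in front of the critical boundary power through the Jacobi specifics, making sure it remains nonzero for all $\a\neq -1/2$. Once this two-sided leading behaviour is in hand, the conclusion follows from a direct integrability check against $d\mu_{\ab}$.
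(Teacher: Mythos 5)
Your reduction via Theorem~\ref{cor:mainmain}(b) is correct ($R_2^{\ab}$ is $L^1$-bounded iff its integral part $T$ is), and the endpoint-testing scheme with $f_\eps$ and Fatou is sound in principle; it is essentially the converse Schur criterion (Lemma~\ref{lem:Schur} in the paper) in disguise, since $L^1$-boundedness of $T$ forces $\essup_{\v}\int_0^\pi |R_2^{\ab}(\t,\v)|\,d\mu_{\ab}(\t)<\infty$, which one then refutes as $\v\to 0^+$. The genuine gap is exactly the step you yourself flag as ``the main obstacle'': you never establish the two-sided asymptotics of $R_2^{\ab}(\t,\v)$ as $\v\to 0^+$, nor even the existence of the limit kernel $R_2^{\ab}(\t,0^+)$ that your Fatou argument presupposes. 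This is not a routine unpacking --- it is the entire content of the paper's appendix. Worse, the divergence is exactly borderline: the singular part behaves like a nonzero multiple of $(\a+\tfrac12)\,\t^{-2\a-2}$ on $2\v<\t<\pi/4$, so tested against $d\mu_{\ab}(\t)\simeq \t^{2\a+1}\,d\t$ the column integral diverges only logarithmically, $\int_{2\v}^{\pi/4}\t^{-1}\,d\t=\log\frac{\pi}{8\v}$ (cf.\ \eqref{iden2}, \eqref{iden3}, \eqref{iden5}, \eqref{iden6}). Any loss of an epsilon in the exponent of the ``leading term'', or an unaccounted sign cancellation in the oscillating kernel, destroys the argument; so it is not enough to observe that upper bounds won't do --- one needs sharp, single-signed lower bounds, and the proposal contains no mechanism for producing them.

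For comparison, the paper makes your coefficient claim provable by a structural step you omit: using the explicit form of $\J^{\ab}$ it writes, for $f \in C_c^\infty(0,\pi)$,
\begin{equation*}
R_2^{\ab}f(\t) = -f(\t) - \Big(\a+\tfrac12\Big)\cot\tfrac{\t}2\,\partial_{\t}\big(\J^{\ab}\big)^{-1}f(\t)
+\Big(\b+\tfrac12\Big)\tan\tfrac{\t}2\,\partial_{\t}\big(\J^{\ab}\big)^{-1}f(\t)
+\tau_{\ab}^2\big(\J^{\ab}\big)^{-1}f(\t),
\end{equation*}
and, since $(\J^{\ab})^{-1}$ is $L^1(d\mu_{\ab})$-bounded by \cite[Theorem 2.3]{NR}, reduces everything to the first-order operators $T_1^{\ab}$, $T_2^{\ab}$. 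This is precisely where the factor vanishing at $\a=-1/2$ comes from: it is the explicit coefficient $-(\a+\tfrac12)$ in front of $T_1^{\ab}$, while $T_1^{\ab}$ itself is shown unbounded near $0$ for \emph{every} $\ab>-1$ (Lemma~\ref{lem:T1}), so no asymptotic coefficient-tracking through the Poisson kernel is needed for that dichotomy. The hard analysis then consists of a four-case decomposition of the kernel of $T_1^{\ab}$, discarding most pieces as bounded via Schur upper bounds (Lemmas~\ref{lem:K=0pi4}, \ref{lem:**}, \ref{lem:-1/2}) and --- crucially --- neutralizing cancellations in the $u$- and $v$-integrations (Lemma~\ref{lem:uintcanc}, together with the splittings \eqref{iden1} and \eqref{iden4}) before a single-signed piece $J_1(\t,\v)$ with logarithmically divergent Schur integral can be isolated. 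Your proposal asserts the conclusion of all of this (a clean power $(\sin\frac{\t}2)^{-\kappa}$ with coefficient vanishing only at $\a=-1/2$) without any argument, so as written it is a plausible strategy, not a proof.
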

In fact, a similar negative result holds also for any $R_{2k}^{\ab}$, $k\ge 2$,
but the proof is much more involved and hence beyond the scope of this paper.

\section{Proofs of the main results} \label{sec:proof}

We begin with some preparatory results.
\begin{lem} \label{lem:diffK}
Let $\ab > -1$, $N \ge 1$ and $j \ge 0$. For $\t,\v \in (0,\pi)$, $\t\neq \v$, we have
$$
\partial_{\t}^{j} K^{\ab}_{N/2}(\t,\v) =
\frac{1}{\Gamma(N)} \int_0^{\infty} \partial_{\t}^{j} H_t^{\ab}(\t,\v) t^{N-1}\, dt,
$$
where one should replace $K^{\ab}_{N/2}(\t,\v)$ by $\widetilde{K}_{N/2}^{\ab}(\t,\v)$
and $H_t^{\ab}(\t,\v)$ by $\widetilde{H}_t^{\ab}(\t,\v)$
in case $\tau_{\ab} = 0$
(actually, after this replacement the formula is valid for all $\ab > -1$).
\end{lem}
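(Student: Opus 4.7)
The plan is to prove this by induction on $j$, with the $j=0$ case being the definition of $K^{\ab}_{N/2}(\t,\v)$ (respectively $\widetilde{K}^{\ab}_{N/2}(\t,\v)$). To pass from $j-1$ to $j$ it suffices, by the standard differentiation-under-the-integral criterion, to produce for each fixed $(\t_0,\v) \in (0,\pi)^2$ with $\t_0 \neq \v$ an open neighbourhood $U \ni \t_0$ bounded away from $\v$ and a function $g \in L^1((0,\infty),dt)$ such that
\[
\big|\partial_{\t}^{j} H_t^{\ab}(\t,\v)\big|\, t^{N-1} \le g(t), \qquad \t \in U,\; t>0,
\]
and analogously with $\widetilde{H}_t^{\ab}$ in place of $H_t^{\ab}$.

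To produce such a dominating function I would invoke Lemma \ref{lem:thetaJP}, announced in the introduction as supplying precise pointwise bounds for $\partial_{\t}^{k} H_t^{\ab}(\t,\v)$. The needed qualitative features are: (i) near $t=0$, a control of the form $|\partial_\t^k H_t^{\ab}(\t,\v)| \lesssim \Phi_k(t,\t,\v)$ where $\Phi_k$ behaves polynomially in $1/t$ but is tempered by a factor that is uniformly bounded for $\t \in U$ (since $|\t - \v|$ is bounded below, there is no on-diagonal blow-up to worry about), so that $\Phi_k(t,\t,\v)\, t^{N-1}$ is integrable near $t=0$ uniformly in $\t \in U$; (ii) for large $t$, uniform exponential decay obtained from the series representation \eqref{d0}. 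In case $\tau_{\ab}\neq 0$, one reads off $|H_t^{\ab}(\t,\v)| \lesssim e^{-t|\tau_{\ab}|}$ (and similarly for derivatives) directly from the spectral series for $t$ away from $0$; in case $\tau_{\ab}=0$ the bottom eigenvalue is zero, which is precisely why one must replace $H_t^{\ab}$ by $\widetilde{H}_t^{\ab}$: subtracting the constant contribution of the $n=0$ eigenfunction gives uniform exponential decay governed by $e^{-t\min\{|n+\tau_{\ab}|:n\ge 1\}}$, and the derivatives inherit this decay.

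Once these bounds are secured, the dominated convergence theorem lets me exchange $\partial_\t$ with the $t$-integral and obtain
\[
\partial_{\t}^{j} \Big[\partial_{\t}^{j-1} K^{\ab}_{N/2}(\t,\v)\Big]
= \frac{1}{\Gamma(N)} \int_0^{\infty} \partial_{\t}^{j} H_t^{\ab}(\t,\v)\, t^{N-1}\, dt,
\]
which closes the induction; the identical scheme works for $\widetilde{K}^{\ab}_{N/2}$ with $\widetilde{H}_t^{\ab}$. Note in particular that the absolute convergence of the integral defining $\partial_\t^j K^{\ab}_{N/2}(\t,\v)$ for $\t \neq \v$, asserted implicitly in the statement, is part of the same estimate.

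The main obstacle is of course establishing the pointwise bounds on the $\t$-derivatives of $H_t^{\ab}$ used in (i)--(ii) above; but this is exactly the content of Lemma \ref{lem:thetaJP}, whose proof is deferred to Section \ref{sec:thetaJP}. Granted that lemma, the present result is a routine Leibniz-type argument, and no further subtlety arises from the distinction between the $\tau_{\ab}\neq 0$ and $\tau_{\ab}=0$ cases beyond the already-built-in use of the compensated kernel in the latter.
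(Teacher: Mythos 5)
Your proposal is correct and takes essentially the same route as the paper: the paper's own proof likewise fixes $\v$, restricts $\t$ to a compact interval away from $\v$, derives a uniform dominating bound $|\partial_{\t}^{j}H_t^{\ab}(\t,\v)| \lesssim e^{-ct}$ for all $t>0$ (using the estimates from \cite{NSS} that also underlie Lemma~\ref{lem:thetaJP}, together with $\q \gtrsim (\t-\v)^2$), and then differentiates under the integral sign via the dominated convergence theorem, exactly as in your induction on $j$ --- and your invocation of Lemma~\ref{lem:thetaJP} is non-circular, since its proof rests only on the \cite{NSS} machinery. One typo: in your closing display the left-hand side should be $\partial_{\t}\big[\partial_{\t}^{j-1}K^{\ab}_{N/2}(\t,\v)\big]$, not $\partial_{\t}^{j}\big[\partial_{\t}^{j-1}K^{\ab}_{N/2}(\t,\v)\big]$.
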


\begin{proof}
We focus on the case $\tau_{\ab} \neq 0$; the opposite one is analogous.
Assume that $\v$ is fixed and $K$ is a closed interval contained in $(0,\pi)$ such that $\v \notin K$. Using \cite[Lemma 3.8]{NSS} and \cite[Corollary 3.5]{NSS} together with the bound $\q \gtrsim (\t-\vp)^2$
for $\q$ appearing there, see \eqref{q_for} below, we arrive at the estimate
\begin{align*}
| \partial_{\t}^j H_t^{\ab}(\t,\v) |
    \lesssim e^{-ct}, \qquad t>0, \quad \t \in K,
\end{align*}
with $c=c_{\ab}>0$.
This allows us to apply the dominated convergence theorem and the conclusion follows.
\end{proof}

Taking $j=N \ge 1$ in Lemma \ref{lem:diffK} we see that the order of integration and differentiation
in the definition of the Riesz-Jacobi kernel in \eqref{d4} can be exchanged.
\begin{cor}\label{cor:Rker}
Let $\ab > -1$ and $N \ge 1$. For $\t,\v \in (0,\pi)$, $\t\neq \v$, we have
$$
R_N^{\ab}(\t,\v) = \partial_{\t}^{N} \Bigg[\frac{1}{\Gamma(N)} \int_0^{\infty} {H}_t^{\ab}(\t,\v)
    t^{N-1}\, dt\Bigg],
$$
where one should replace $H_t^{\ab}(\t,\v)$ by $\widetilde{H}_t^{\ab}(\t,\v)$ in case $\tau_{\ab} = 0$
(actually, after this replacement the formula is valid for all $\ab > -1$).
\end{cor}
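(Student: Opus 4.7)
The plan is almost entirely to quote Lemma~\ref{lem:diffK} with the parameter choice $j = N$. Doing so yields
\[
\partial_{\t}^{N} K^{\ab}_{N/2}(\t,\v) = \frac{1}{\Gamma(N)} \int_0^{\infty} \partial_{\t}^{N} H_t^{\ab}(\t,\v) t^{N-1}\, dt
\]
for $\t \neq \v$. By the very definition of $K^{\ab}_{N/2}$, the left-hand side equals $\partial_{\t}^{N}\bigl[\frac{1}{\Gamma(N)}\int_0^\infty H_t^{\ab}(\t,\v) t^{N-1}\, dt\bigr]$, while the right-hand side is, by \eqref{d4}, precisely the kernel $R_N^{\ab}(\t,\v)$. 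Chaining these two identities gives the claim for $\tau_{\ab}\neq 0$.

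For the case $\tau_{\ab} = 0$, the same argument applies verbatim with $\widetilde H_t^{\ab}$ and $\widetilde K_{N/2}^{\ab}$ in place of $H_t^{\ab}$ and $K_{N/2}^{\ab}$, since Lemma~\ref{lem:diffK} is stated in that form for all $\ab > -1$. One may further observe that $\P_0^{\ab}$ is a constant, so $\partial_{\t}^N H_t^{\ab} = \partial_{\t}^N \widetilde H_t^{\ab}$ for every $N\ge 1$, which is why the definition of $R_N^{\ab}(\t,\v)$ itself does not need to be adjusted. There is really no obstacle at this stage: the one substantive point, namely the justification of interchanging $\partial_{\t}^N$ with the $t$-integration, has already been absorbed into the proof of Lemma~\ref{lem:diffK} through a dominated convergence argument based on the uniform exponential bound $|\partial_{\t}^N H_t^{\ab}(\t,\v)| \lesssim e^{-ct}$ valid on compacts avoiding the diagonal.
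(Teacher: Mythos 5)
Your proposal is correct and coincides with the paper's own proof, which derives the corollary precisely by taking $j=N$ in Lemma~\ref{lem:diffK}, with the case $\tau_{\ab}=0$ handled by the same substitution of $\widetilde{H}_t^{\ab}$ and $\widetilde{K}_{N/2}^{\ab}$ for $H_t^{\ab}$ and $K_{N/2}^{\ab}$. Your added remark that $\partial_{\t}^{N} H_t^{\ab}(\t,\v)=\partial_{\t}^{N}\widetilde{H}_t^{\ab}(\t,\v)$ for $N\ge 1$, because $\P_0^{\ab}$ is constant, is a correct clarification (left implicit in the paper) of why the kernel in \eqref{d4} itself requires no modification when $\tau_{\ab}=0$.
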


The next lemma can be regarded as an extension of \cite[Step 1 on p.\,516]{BFRT}.
It will be crucial in reducing the proof of Theorem~\ref{thm:main} to the case $N=1$.
\begin{lem}\label{lem:Step1}
Let $\ab > -1$, $N \ge 1$ and $0 \le j \le N-1$. Then for $f \in C_c^\infty(0,\pi)$ and $\t \in (0,\pi)$
we have
\[
\partial_{\t}^j \big(\J^{\ab}\big)^{-N/2} f (\t)
= \int_0^{\pi} \partial_{\t}^j K^{\ab}_{N/2} (\t,\v) f(\v)
\, d\mu_{\ab}(\v), 
\]
where one should replace $K^{\ab}_{N/2} (\t,\v)$ by
$\widetilde{K}^{\ab}_{N/2} (\t,\v)$ and $f$ on the left-hand side by $\Pi_0 f$ in case $\tau_{\ab} = 0$
(actually, after this replacement the formula is valid for all $\ab > -1$).
\end{lem}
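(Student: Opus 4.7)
The base case $j=0$ is precisely the representation \eqref{d7} (and its compensated analog for $\tau_{\ab}=0$, obtained by the same subordination computation applied to $\widetilde{H}_t^{\ab}$ in place of $H_t^{\ab}$; the uniform control needed for Fubini follows from the exponential decay of $\widetilde{H}_t^{\ab}$ in $t$). For $1 \le j \le N-1$ the plan is to differentiate $j$ times under the integral sign in \eqref{d7} and then invoke Lemma~\ref{lem:diffK} to identify the resulting kernel with $\partial_\theta^j K^{\ab}_{N/2}(\theta,\varphi)$.

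I would proceed by induction on $j$, pushing one $\partial_\theta$ inside the integral at a time. For the passage from $j$ to $j+1$, the key ingredient is a locally-in-$\theta$ uniform majorant of $|\partial_\theta^{j+1} K^{\ab}_{N/2}(\theta,\varphi)|$ that is integrable in $\varphi$ against $d\mu_{\ab}$: that is, for every closed subinterval $U \subset (0,\pi)$,
\[
\int_0^{\pi} \sup_{\theta \in U} \bigl| \partial_\theta^{j+1} K^{\ab}_{N/2}(\theta,\varphi) \bigr| \, d\mu_{\ab}(\varphi) < \infty.
\]
Once this bound is in place, since $f \in C_c^{\infty}(0,\pi)$ is bounded, a standard mean value theorem plus dominated convergence argument legitimizes differentiating once more under the integral. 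Using Lemma~\ref{lem:diffK} and Fubini, the task reduces further to controlling
\[
\int_0^{\pi} \int_0^{\infty} \sup_{\theta \in U} \bigl| \partial_\theta^{j+1} H_t^{\ab}(\theta,\varphi) \bigr| \, t^{N-1}\, dt \, d\mu_{\ab}(\varphi).
\]

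The main obstacle, and the only substantive analytic input, is supplying that finite bound. This I would extract from Lemma~\ref{lem:thetaJP} (proved later in Section~\ref{sec:thetaJP}), which gives sharp estimates for the $\theta$-derivatives of the Jacobi-Poisson kernel. Heuristically, each $\partial_\theta$ costs at most one factor of $t^{-1}$ near $t=0$, so the integrand behaves essentially like $t^{N-2-j}$ times an $L^1(d\mu_{\ab})$-in-$\varphi$ quantity; the hypothesis $j+1 \le N$ is precisely what is needed for the $t$-integral near zero to converge, while the standard exponential decay of (a suitable piece of) $H_t^{\ab}$ for large $t$ handles the tail. The $\varphi$-integrability of the resulting potential-type kernel, uniformly for $\theta \in U$, follows from the fact that $j+1 < N$ leaves the diagonal singularity of $\partial_\theta^{j+1} K^{\ab}_{N/2}(\theta,\cdot)$ locally integrable with respect to $d\mu_{\ab}$; this is the same mechanism by which $K^{\ab}_{N/2}$ itself is a Calder\'on--Zygmund potential kernel of fractional order $N$ in our space of homogeneous type.

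The case $\tau_{\ab}=0$ is handled in complete analogy, replacing $H_t^{\ab}$ by $\widetilde{H}_t^{\ab}$ and $f$ by $\Pi_0 f$ (the difference is a tensor of constants, whose $\theta$-derivatives of positive order vanish and whose effect at order zero is absorbed by the projection $\Pi_0$). Since $\widetilde{H}_t^{\ab}$ has exponential decay as $t\to\infty$ uniformly in $(\theta,\varphi)$, the large-$t$ part of the argument is in fact easier, and the small-$t$ analysis is unchanged because $\widetilde{H}_t^{\ab}$ and $H_t^{\ab}$ differ by a smooth term of no consequence near $t=0$.
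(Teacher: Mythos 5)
Your base case and your choice of ingredients (Lemma~\ref{lem:diffK} together with the bounds of Lemma~\ref{lem:thetaJP}) are the right ones, but your inductive mechanism --- differentiating under the $\v$-integral --- has a genuine gap at the top order. The displayed sufficient condition $\int_0^{\pi}\sup_{\t\in U}|\partial_\t^{j+1}K_{N/2}^{\ab}(\t,\v)|\,d\mu_{\ab}(\v)<\infty$ cannot be extracted from Lemma~\ref{lem:thetaJP} in the final step $j+1=N-1$ of your induction (needed for every $N\ge 2$): that lemma bounds $|\partial_\t^{N-1}H_t^{\ab}(\t,\v)|$ for $t\le 1$ by bounded weight factors times $t\big[t^2+(\t-\v)^2\big]^{-(N+1)/2}$, and $\int_0^1 t^{N}\big[t^2+(\t-\v)^2\big]^{-(N+1)/2}\,dt \simeq \log\frac{1}{|\t-\v|}$, whose supremum over $\t\in U$ is identically $+\infty$ for $\v$ in the interior of $U$. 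For odd $N$ this is not an artifact of the estimate: $\partial_\t^{N-1}K_{N/2}^{\ab}$ genuinely has a logarithmic diagonal singularity (flat model on the line: the kernel of the order-$3$ potential behaves like $c(x-y)^2\log|x-y|$, so its second derivative is logarithmic), hence your majorant condition is simply false. For even $N$ the kernel is merely of jump type near the diagonal (model: $K_1 \sim -\frac{1}{2}|x-y|$ locally, so $\partial_\t K_1$ has a bounded jump), so the condition holds, but only because of a sign cancellation in $t\mapsto\partial_\t^{N-1}H_t^{\ab}(\t,\v)$ that absolute-value bounds like Lemma~\ref{lem:thetaJP} cannot detect. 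Relatedly, $\partial_\t^{N-2}K_{N/2}^{\ab}(\cdot,\v)$ is not differentiable at $\t=\v$ (corner, or log-corner), so the hypotheses of the standard mean-value-plus-dominated-convergence theorem fail for every $\v$ lying between $\t$ and $\t+h$; these $\v$ form an interval of measure $|h|$ on which the difference quotients are not controlled by your majorant, and repairing this requires a separate near-diagonal estimate which your writeup does not supply.

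The paper's proof avoids the diagonal entirely, and I recommend you restructure along its lines: by Proposition~\ref{prop:FJdecay} and \eqref{d5} one differentiates the spectral series \eqref{d2} term by term $j$ times, inserts $|n+\tau_{\ab}|^{-N}=\frac{1}{\Gamma(N)}\int_0^\infty e^{-t|n+\tau_{\ab}|}t^{N-1}\,dt$ and resums, obtaining $\partial_\t^j(\J^{\ab})^{-N/2}f(\t)=\frac{1}{\Gamma(N)}\int_0^\infty\int_0^\pi\partial_\t^j H_t^{\ab}(\t,\v)f(\v)\,d\mu_{\ab}(\v)\,t^{N-1}\,dt$, and then performs a single Fubini swap of the $t$- and $\v$-integrations at \emph{fixed} $\t$. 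The hypothesis $j\le N-1$ enters exactly in the absolute-convergence check, via $t^{N}\big[t^2+(\t-\v)^2\big]^{-1-j/2}\le t\big[t^2+(\t-\v)^2\big]^{-1}$ for $t\le 1$, which after integration in $t$ leaves only the fixed-$\t$ integrable singularity $\log\big(1+(\t-\v)^{-2}\big)$ --- no supremum over $\t$, no differentiation across the diagonal ever occurs --- and Lemma~\ref{lem:diffK} then identifies the inner $t$-integral with $\partial_\t^j K_{N/2}^{\ab}(\t,\v)$. Your remarks on the case $\tau_{\ab}=0$ (uniform exponential decay of $\widetilde{H}_t^{\ab}$ in $t$, the subtracted term being a constant) are correct and carry over to this scheme unchanged.
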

The proof of Lemma \ref{lem:Step1} will be given in Section~\ref{sec:thetaJP}.
The reasoning involves Lemma \ref{lem:diffK}, as well as suitable estimates of 
$\partial_{\t}^j H_t^{\ab} (\t,\v)$, $j \ge 0$. The latter will also be used directly in the proof of 
Theorem~\ref{thm:main} and are provided by the next lemma. This result is of independent interest,
and its proof is located also in Section~\ref{sec:thetaJP}.
\begin{lem}\label{lem:thetaJP}
Let $\ab > -1$ and $j \ge 0$. Then
\begin{align*}
|\partial_{\t}^j H_t^{\ab} (\t,\v)|
\lesssim
\Big( t^2+ \t^2+\v^2 \Big)^{-\a-1/2}
    \Big( t^2 + (\pi-\t)^2 + (\pi-\v)^2 \Big)^{-\b-1/2}
\frac{t}{\big[ t^2+(\t-\v)^2 \big]^{1+j/2}},
\end{align*}
uniformly in $ t \le 1$ and $\t,\v \in (0,\pi)$.
Further, excluding the case when $j = \tau_{\ab} = 0$, there exists $c = c_{\ab}>0$ such that 
\[
|\partial_{\t}^j H_t^{\ab} (\t,\v)|
\lesssim
e^{-ct},
\qquad t \ge 1, \quad \t,\v \in (0,\pi).
\]
This estimate holds with no restrictions on $\ab$ and $j$ if $H_t^{\ab}(\t,\v)$ is replaced by
$\widetilde{H}_t^{\ab}(\t,\v)$.
\end{lem}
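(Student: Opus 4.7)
The plan is to treat the ranges $t\le 1$ and $t\ge 1$ by entirely different methods.

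For $t\ge 1$ I would argue directly from the spectral series \eqref{d0}. Iterating \eqref{derP} $j$ times expresses $\partial_\theta^j \mathcal{P}_n^{\alpha,\beta}$ as a polynomial expression in $n$ multiplied by a Jacobi polynomial with shifted parameters, and combined with \eqref{d55} this yields the polynomial bound \eqref{d5}: $|\partial_\theta^j \mathcal{P}_n^{\alpha,\beta}(\theta)|\lesssim (n+1)^{c_1}$ for some $c_1 = c_1(\alpha,\beta,j)$. Set $\sigma_{\alpha,\beta}=\min\{|n+\tau_{\alpha,\beta}|:n\ge 0,\ |n+\tau_{\alpha,\beta}|>0\}>0$. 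Outside the excluded case $j=\tau_{\alpha,\beta}=0$, the $n=0$ term of the differentiated series either vanishes identically (since $\partial_\theta^j \mathcal{P}_0^{\alpha,\beta}\equiv 0$ for $j\ge 1$) or decays like $e^{-|\tau_{\alpha,\beta}|t}$ (when $\tau_{\alpha,\beta}\ne 0$). Splitting $e^{-t|n+\tau_{\alpha,\beta}|}=e^{-(t-1)|n+\tau_{\alpha,\beta}|}e^{-|n+\tau_{\alpha,\beta}|}$ for $t\ge 1$ then gives
$$|\partial_\theta^j H_t^{\alpha,\beta}(\theta,\varphi)|\lesssim e^{-(t-1)\sigma_{\alpha,\beta}}\sum_{n\ge 0}e^{-|n+\tau_{\alpha,\beta}|}(n+1)^{2c_1}\lesssim e^{-ct},$$
uniformly in $\theta,\varphi\in(0,\pi)$, with $c=c_{\alpha,\beta}>0$. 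For $\widetilde{H}_t^{\alpha,\beta}$ the $n=0$ term is removed by definition \eqref{Htilde}, and the same argument then works without any restriction on $\alpha,\beta,j$.

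For $t\le 1$ I would start from the standard product-type integral representation
$$H_t^{\alpha,\beta}(\theta,\varphi)=c_{\alpha,\beta}\int\int \Psi^{\alpha,\beta}(t,\theta,\varphi,u,v)\,d\Pi_\alpha(u)\,d\Pi_\beta(v),$$
where $\Psi^{\alpha,\beta}$ is the Jacobi-Poisson building block of \cite{NoSj2,NSS}, controlled by a negative power of $t^2+\mathfrak{q}(\theta,\varphi,u,v)$ times $\sinh(t/2)$, and $\mathfrak{q}$ denotes the fundamental Jacobi quantity, which in particular satisfies $\mathfrak{q}\gtrsim(\theta-\varphi)^2$. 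The strategy is to differentiate $j$ times in $\theta$ under the integral sign, observing that each application of $\partial_\theta$ acts on $\Psi^{\alpha,\beta}$ by producing, up to trigonometric factors bounded by $1$, an additional factor of order $(t^2+\mathfrak{q})^{-1/2}$. Integrating against $d\Pi_\alpha\,d\Pi_\beta$ by means of the double-integral estimates already developed in \cite[Section~6]{NSS}, and extracting the factor $(t^2+(\theta-\varphi)^2)^{-j/2}$ from one of the $(t^2+\mathfrak{q})^{-1/2}$ copies via $\mathfrak{q}\gtrsim(\theta-\varphi)^2$, produces exactly the three ingredients in the target inequality: the left-endpoint scaling $(t^2+\theta^2+\varphi^2)^{-\alpha-1/2}$, the right-endpoint scaling $(t^2+(\pi-\theta)^2+(\pi-\varphi)^2)^{-\beta-1/2}$, and the Poisson factor $t\,[t^2+(\theta-\varphi)^2]^{-1-j/2}$.

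The main obstacle is the uniform-in-$(u,v)$ derivative estimate on $\Psi^{\alpha,\beta}$ producing exactly one factor $(t^2+\mathfrak{q})^{-1/2}$ per application of $\partial_\theta$. This requires an inductive calculation based on the explicit dependence of $\Psi^{\alpha,\beta}$ on $\cosh(t/2)$, $\sin(\theta/2)$, $\cos(\theta/2)$ and $\mathfrak{q}$, which is where the technical bulk of Section~\ref{sec:thetaJP} will be spent; the subsequent integration in $(u,v)$ is then a routine application of the bounds from \cite[Section~6]{NSS}.
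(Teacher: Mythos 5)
Your large-$t$ argument is correct and is essentially the argument behind the bound the paper simply cites at this point ([NSS, Lemma 3.8] is proved by the same kind of manipulation of the series \eqref{d0}, using \eqref{d5} and the spectral gap), and your treatment of the excluded case $j=\tau_{\ab}=0$ and of $\widetilde{H}_t^{\ab}$ is right. Your small-$t$ skeleton is also valid, and even slightly more economical than the paper's, \emph{but only in the range $\a,\b\ge -1/2$}: there the shortcut of pulling $(t^2+\q)^{-j/2}\lesssim\big[t^2+(\t-\v)^2\big]^{-j/2}$ out of the double integral (legitimate since $\q\gtrsim(\t-\v)^2$ by \eqref{elem}) and absorbing what remains into the sharp two-sided bound for $H_t^{\ab}$ itself from \cite[Theorem 6.1]{NSS} does reproduce the three factors of the claimed estimate, whereas the paper keeps the full exponent $\a+\b+2+j/2$ and evaluates the double integral by two applications of Lemma~\ref{lem:intest}.

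The genuine gap is that the product representation $H_t^{\ab}=c_{\ab}\iint\Psi^{\ab}\,\pia\,\pib$, on which your entire small-$t$ argument rests, does not exist when $-1<\a<-1/2$ or $-1<\b<-1/2$: the density $(1-u^2)^{\a-1/2}$ is then non-integrable, and the correct representations (\cite[Proposition 2.3 (ii)--(iv)]{NSS}) arise by integration by parts in $u$ and/or $v$, producing terms involving the odd antiderivative $\Pi_{\a}(u)\,du$ (controlled via \eqref{smallab}) together with $\partial_u$- and $\partial_v$-derivatives falling on the building block. Consequently one must bound $\partial_u^K\partial_v^R\partial_\t^j\Psi^{\lambda}$ for $K,R\in\{0,1\}$, and here your heuristic of ``one factor $(t^2+\q)^{-1/2}$ per derivative'' is no longer the right bookkeeping: each $u$-derivative costs $\big(\st+\svp\big)^{k}(t^2+\q)^{-k/2}$ with $k\in\{1,2\}$ (similarly in $v$, with cosines), and the shifted measures $d\Pi_{\a+1}$ or $d\Pi_{-1/2}$ replace the formal $d\Pi_{\a}$. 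It is essential that $k,r$ can be restricted to $\{1,2\}$; this is exactly the refinement of \cite[(11)]{NSS} that Lemma~\ref{lem:tNSS} establishes, and the unrefined \cite[Corollary 3.5]{NSS} would not suffice, because in the final step the ratio factors $\big(\tfrac{t^2+(\t-\v)^2}{t^2+\t^2+\v^2}\big)^{-(\a+1/2)(1-K)+K(1-k/2)}$ must carry non-negative exponents, which fails for $k\ge 3$ (and the sign $-(\a+1/2)>0$ for $K=0$ is precisely what the sub-half-range supplies). Nor is the subsequent $(u,v)$-integration the ``routine application'' you describe: it is a two-fold application of Lemma~\ref{lem:intest} with carefully matched parameters $\nu,\kappa,\gamma$, followed by the sign check just indicated. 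These four parameter cases, the refinement of \cite[(11)]{NSS}, and the exponent bookkeeping constitute the actual content of the paper's proof, and your proposal does not engage with any of them.
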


The result below says that Theorem \ref{thm:main} holds in the special case when $N=1$ and 
$f = \boldsymbol{1}$. 
Its proof is rather long and technical, hence is postponed to Section~\ref{sec:PV0}.
\begin{lem}\label{lem:PV=0}
    Let $\ab >-1$. Then
    $$
\pv \int_{0}^\pi R_1^{\ab}(\t,\v) \, d\mu_{\ab}(\v) = 0, \qquad \t \in (0,\pi).
$$
\end{lem}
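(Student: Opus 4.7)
The plan is to prove $I_\eps(\t) := \int_{A_\eps(\t)} R_1^{\ab}(\t,\v)\, d\mu_{\ab}(\v) \to 0$ as $\eps \to 0^+$, where $A_\eps(\t) = \{\v \in (0,\pi) : |\v - \t| > \eps\}$. The starting observation is that the semigroup identity \eqref{iden0} makes $\t \mapsto \int_0^\pi H_t^{\ab}(\t,\v)\, d\mu_{\ab}(\v) = e^{-t|\tau_{\ab}|}$ constant in $\t$, while Lemma \ref{lem:thetaJP} (at each fixed $t > 0$) ensures that differentiation under the integral sign is legal, yielding
\[
\int_0^\pi \partial_\t H_t^{\ab}(\t,\v)\, d\mu_{\ab}(\v) = 0, \qquad t > 0,\ \t \in (0,\pi),
\]
with the analogous identity for $\widetilde H_t^{\ab}$ when $\tau_{\ab} = 0$.

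Next, Lemma \ref{lem:diffK} with $N = j = 1$ gives $R_1^{\ab}(\t,\v) = \int_0^\infty \partial_\t H_t^{\ab}(\t,\v)\, dt$ for $\t \neq \v$. The estimates of Lemma \ref{lem:thetaJP} yield absolute integrability on $A_\eps(\t) \times (0,\infty)$: on $\{t \ge 1\}$ via exponential decay, on $\{t \le 1\}$ from the lower bound $|\v-\t| > \eps$ in the singular factor combined with finiteness of the integral of the endpoint-weight factors against $d\mu_{\ab}$, uniformly in $t \in (0,1]$. Fubini then permits exchanging the order of integration, and combining with the above display gives
\[
I_\eps(\t) = -\int_0^\infty \tilde J_\eps(t)\, dt, \qquad \tilde J_\eps(t) := \int_{|\v-\t|<\eps,\, 0 < \v < \pi} \partial_\t H_t^{\ab}(\t,\v)\, d\mu_{\ab}(\v).
\]

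It remains to show $\int_0^\infty \tilde J_\eps(t)\, dt \to 0$. For each fixed $t > 0$, $\tilde J_\eps(t) \to 0$ by bounded convergence, and for $t \ge 1$ the exponential bound of Lemma \ref{lem:thetaJP} furnishes an $\eps$-uniform integrable majorant. The main obstacle is the small-$t$ regime: the crude absolute-value bound yields $|\tilde J_\eps(t)| \lesssim \eps/(t\sqrt{t^2+\eps^2})$, not integrable near $t = 0$, so dominated convergence cannot be applied naively. Cancellation must be extracted. After substituting $\v = \t + u$ and splitting the density $w(\t+u)$ of $d\mu_{\ab}$ as $w(\t) + [w(\t+u) - w(\t)]$, the remainder piece gains a factor $|u| \le \eps$ which, after the $u$-integration, yields a bound uniform in $\eps$ and integrable on $(0,1)$ in $t$; this contribution vanishes in the limit by dominated convergence. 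The principal piece $w(\t)\int_{-\eps}^\eps \partial_1 H_t^{\ab}(\t,\t+u)\, du$ requires exploiting the near-reflection symmetry $H_t^{\ab}(\t,\t+u) \approx H_t^{\ab}(\t,\t-u)$ around the diagonal, readable from the product-formula representation of $H_t^{\ab}$ used in \cite{NoSj2, NSS}: the leading $u$-odd singular part of $\partial_1 H_t^{\ab}(\t,\t+u)$ (of size $\sim tu/(t^2+u^2)^2$) integrates to $0$ on $(-\eps,\eps)$ by antisymmetry, while the smoother remainder satisfies an $\eps$-uniform, integrable-in-$t$ bound. This last symmetry-exploiting step is the technical heart of the proof and is the reason the full argument is postponed to Section \ref{sec:PV0}.
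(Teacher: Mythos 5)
Your proposal is correct and follows essentially the same route as the paper's own proof: reduction via \eqref{zeroint} and Fubini over the truncated region, then a dominated-convergence passage to the limit in which the small-$t$ regime is handled by reflecting about the diagonal ($\v\mapsto 2\t-\v$, i.e.\ $u\mapsto -u$), splitting off the variation of the density $\mu_{\ab}(\v)$ (the paper's Lemma~\ref{lem:---}\,(a)) and cancelling the odd leading part of the kernel, which is precisely the paper's Lemma~\ref{lem:derqcomp} writing $\partial_\t\q=\frac12\sin\frac{\t-\v}{2}+Q$ with $|Q|\lesssim\q$. The one step you assert rather than prove --- that the even remainder admits an $\eps$-uniform integrable majorant --- is exactly where the paper spends the bulk of Section~\ref{sec:PV0}, via a four-case analysis in $\ab$ using the $d\Pi_{\a}$, $d\Pi_{\b}$ integral representations of $H_t^{\ab}$; your decomposition is the one that works there.
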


We are now in a position to prove Theorem~\ref{thm:main}.

\begin{proof}[Proof of Theorem~\ref{thm:main}.]
Our reasoning has two steps: (1.) reduction to the case $N=1$ and (2.) the proof for $N=1$.
Let $f \in C_c^{\infty}(0,\pi)$ be fixed.

\noindent \textbf{Step 1.} 
For the sake of clarity let us assume that $\tau_{\ab} \neq 0$; the parallel case $\tau_{\ab}=0$ is
commented at the end of Step 1.
Observe that, in view of the explicit formula for $\J^{\ab}$, we have the decompositions
\begin{align} \label{dec1}
\partial_{\t}^{2m} & = (-1)^m  \big(\J^{\ab}\big)^m + \sum_{0 \le j < 2m} f_{m,j}(\t) \partial_{\t}^j,
	\qquad m \ge 1,\\ \nonumber
\partial_{\t}^{2m+1} & = (-1)^m \partial_{\t} \big(\J^{\ab}\big)^m + \sum_{0 \le j \le 2m} g_{m,j}(\t) \partial_{\t}^j, \qquad m \ge 0, 
\end{align}
where $f_{m,j}$ and $g_{m,j}$ are some smooth, possibly unbounded, functions of $\t \in (0,\pi)$
(of course, $f_{m,j}$ and $g_{m,j}$ can be determined explicitly, but we shall not need this).
Thus, in view of \eqref{d6}, for all $\t \in (0,\pi)$
\begin{align}
R_{2m}^{\ab}f(\t) & = (-1)^m f(\t) + \sum_{0 \le j < 2m} f_{m,j}(\t) \partial_{\t}^j \big(\J^{\ab}\big)^{-m}f(\t),
\qquad m \ge 1, \quad \tau_{\ab} \neq 0,\label{decReven}\\
R_{2m+1}^{\ab}f(\t) & = (-1)^m R_1^{\ab} f(\t) + \sum_{0 \le j \le 2m} g_{m,j}(\t)\partial_{\t}^j
	\big(\J^{\ab}\big)^{-m-1/2}f(\t), \qquad m \ge 0,\quad \tau_{\ab} \neq 0,\label{decRodd}
\end{align}
where we used the identities
\begin{align} \label{idenJ1}
(\J^{\ab})^{m}(\J^{\ab})^{-m}f(\t) & = f(\t), \qquad m \ge 1, \quad \tau_{\ab} \neq 0,\\ 
(\J^{\ab})^{m}(\J^{\ab})^{-m-1/2}f(\t) & = (\J^{\ab})^{-1/2}f(\t), \qquad m \ge 0, \quad \tau_{\ab} \neq 0.  \label{idenJ2}
\end{align}
The latter are valid for all $\t \in (0,\pi)$ and can be
verified by applying $(\J^{\ab})^m$ under the series defining $(\J^{\ab})^{-m}$ or
$(\J^{\ab})^{-m-1/2}$, respectively, see \eqref{d2}, Proposition~\ref{prop:FJdecay} and \eqref{d5}.

Taking into account Lemma \ref{lem:Step1} we infer that the sum over $0 \le j < 2m$ appearing in \eqref{decReven}
has a usual integral representation and the corresponding kernel is
\begin{equation} \label{comp7}
\sum_{0 \le j < 2m} f_{m,j}(\t) \partial_{\t}^{j} K_m^{\ab}(\t,\v) = \partial_{\t}^{2m} K_m^{\ab}(\t,\v)
	-(-1)^m \big(\J^{\ab}\big)_{\t}^{m} K_m^{\ab}(\t,\v).
\end{equation}
Therefore, in view of Corollary \ref{cor:Rker}, 
to finish proving Theorem \ref{thm:main} for $N$ even it suffices to check that 
\begin{equation} \label{HarmK}
\big(\J^{\ab}\big)_{\t}^m K_m^{\ab} (\t,\v) = 0,
\qquad  \t,\v \in (0,\pi), \quad \t \ne \v, \quad m \ge 1, \quad \tau_{\ab} \neq 0.
\end{equation}
Since $H_t^{\ab}(\t,\v)$ satisfies the Poisson equation based on $\J^{\ab}$, we have
$$
\big(\J^{\ab}\big)^m_{\t} H_t^{\ab} (\t,\v) = \partial_t^{2m} H_t^{\ab} (\t,\v);
$$
this identity can also be seen directly, by differentiating the series in \eqref{d0}.
Thus an application of Lemma~\ref{lem:diffK} and then integration by parts give us
\begin{equation} \label{HermKe}
\big(\J^{\ab}\big)_{\t}^m K_m^{\ab} (\t,\v)
    =  \frac{1}{\Gamma(2m)} \int_0^{\infty} \partial_t^{2m} H_t^{\ab}(\t,\v) t^{2m-1}\, dt = 0, 
    \qquad \t \ne \v, \quad m \ge 1, \quad \tau_{\ab} \neq 0;
\end{equation}
here to ensure that
$\partial_t^j H_t^{\ab}(\t,\v) t^j$, $0 \le j < 2m$, vanish as $t\to 0^+$ and as
$t \to \infty$,
we use 
\cite[Corollary~3.5, Theorem 6.1 and Lemma 3.8]{NSS}. 

Similarly, using again Lemma \ref{lem:Step1} we see that the sum over $0 \le j \le 2m$ appearing in \eqref{decRodd} has a usual integral representation, the kernel being given by
\begin{equation} \label{ff}
\sum_{0 \le j \le 2m} g_{m,j}(\t) \partial_{\t}^j K_{m+1/2}^{\ab}(\t,\v) =
\partial_{\t}^{2m+1} K_{m+1/2}^{\ab}(\t,\v) - (-1)^m \partial_{\t} 
\big(\J^{\ab}\big)_{\t}^m K^{\ab}_{m+1/2}(\t,\v).
\end{equation}
But here $(\J^{\ab})_{\t}^m K^{\ab}_{m+1/2}(\t,\v) = K_{1/2}^{\ab}(\t,\v)$, $\t \neq \v$, 
which can be checked
similarly as \eqref{HarmK}. 
This, by means of Corollary~\ref{cor:Rker}, proves the theorem for $N$ odd, assuming that it holds for $N=1$.

When $\tau_{\ab}=0$, one has to replace $f$ by $\Pi_0 f$ on the right-hand side of
\eqref{decReven} and also in the sum over $0 \le j \le 2m$ on the right-hand side of \eqref{decRodd}. The same modification is needed on both sides of \eqref{idenJ1} and \eqref{idenJ2}, and $K_m^{\ab}(\t,\v)$ should be replaced by $\widetilde{K}_m^{\ab}(\t,\v)$ in \eqref{comp7}.
Further, instead of \eqref{HarmK} we have 
$$
\big(\J^{\ab}\big)_{\t}^m \widetilde{K}_m^{\ab}(\t,\v) = -1/\mu_{\ab}(0,\pi), 
\qquad \t,\v \in (0,\pi), \quad \t \neq \v, \quad m \ge 1, \quad \tau_{\ab} = 0,
$$
since the last integration by parts in an analogue of \eqref{HermKe} with $K_{m}^{\ab}(\t,\v)$ replaced
by $\widetilde{K}_m^{\ab}(\t,\v)$ gives
$-H_t^{\ab}(\t,\v)\big|_{t=\infty} = - 1/\mu_{\ab}(0,\pi)$, $\t \ne \v$. Taking into account that
$$
\Pi_0f(\t) = f(\t) - \frac{1}{\mu_{\ab}(0,\pi)} \int_0^{\pi} f(\v)\, d\mu_{\ab}(\v), \qquad \t \in (0,\pi),
$$
we conclude the theorem for $N$ even also in case $\tau_{\ab}=0$.
Finally, replacing in \eqref{ff} $K^{\ab}_{m+1/2}(\t,\v)$ and $K_{1/2}^{\ab}(\t,\v)$ by $\widetilde{K}^{\ab}_{m+1/2}(\t,\v)$ and $\widetilde{K}_{1/2}^{\ab}(\t,\v)$, respectively, makes
the reasoning of Step 1 for $N$ odd go through in case $\tau_{\ab}=0$.

\noindent \textbf{Step 2.} 
Let $N=1$ and $\t \in (0,\pi)$ be fixed.
Proceeding as in the chain of identities \eqref{ch1}, i.e.\ using Proposition~\ref{prop:FJdecay}, \eqref{d5} and Fubini's theorem, we get
\begin{equation}\label{intRiesz}
R_1^{\ab} f (\theta)
        = \int_0^\infty \int_0^{\pi} \partial_{\t} {H}_t^{\ab}(\t,\v) f (\v) \, d\mu_{\ab}(\v) \, dt;
\end{equation} 
here the case $\tau_{\ab} = 0$ is also included. 
The above double integral is not absolutely convergent, so one cannot use Fubini's 
theorem to change the order of integration. However, taking into account \eqref{iden0},
for each $t>0$, the dominated convergence theorem gives us
    \begin{equation} \label{zeroint}
        \int_0^{\pi} \partial_{\t} {H}_t^{\ab}(\t,\v) \, d\mu_{\ab}(\v) = 0, \qquad t>0.
    \end{equation}
This identity combined with \eqref{intRiesz} leads to
\[
R_1^{\ab} f (\theta)
        = \int_0^\infty \int_0^{\pi} \partial_{\t} {H}_t^{\ab}(\t,\v) 
\big[f (\v) - f(\t)\big] \, d\mu_{\ab}(\v) \, dt.
\]
We claim that this double integral is absolutely convergent. Indeed, using Lemma~\ref{lem:thetaJP}
and the Mean Value Theorem we get
    \begin{align*}
\int_0^{\pi} \bigg( \int_0^1 + \int_1^\infty \bigg) \big|\partial_{\t} {H}_t^{\ab}(\t,\v)\big| |f (\v) - f(\t)| \, dt \, d\mu_{\ab}(\v) 
\lesssim 1 + \int_0^{\pi} \int_0^1 
\frac{|\t-\v| t \, dt}{(t+|\theta-\v|)^3}  \, d\mu_{\ab}(\v)
            < \infty,
    \end{align*}
where the last inequality follows by the change of variable $s=|\t-\v| t$.
    Hence an application of Fubini's theorem produces
    \begin{align*}
        R_1^{\ab} f (\theta)
             = \int_0^{\pi}  R_1^{\ab}(\t,\v) \big[f (\v)-f(\t)\big] \, d\mu_{\ab}(\v).
    \end{align*}
This together with Lemma \ref{lem:PV=0} concludes Step 2. 

The proof of Theorem \ref{thm:main} is finished.
\end{proof}

Next, we give the proof of Theorem \ref{cor:mainmain}. In order to show item (b), we will need
the following technical result whose proof is located in Section~\ref{sec:weightRiesz}.

\begin{lem}\label{lem:weightRiesz}
Assume that $\ab > -1$.
Let $N \ge 2$ be even and let $\t \in (0,\pi)$ be fixed.
\begin{itemize}
\item[(i)]
If $1 \le p < \infty$, then $(0,\pi) \ni \v \mapsto R_N^{\ab}(\t,\v)$ belongs to $L^p(wd\mu_{\ab})$, $w \in A_p^{\ab}$.
\item[(ii)]
If $p=1$ and $w \in A_1^{\ab}$, then $(0,\pi) \ni \v \mapsto R_N^{\ab}(\t,\v)/w(\v)$ belongs to $L^\infty(0,\pi)$.
\end{itemize}
\end{lem}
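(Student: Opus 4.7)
The plan is to start from Corollary~\ref{cor:Rker}, which gives $R_N^{\ab}(\t,\v) = \partial_\t^N K_{N/2}^{\ab}(\t,\v)$ off the diagonal (with $\widetilde{K}_{N/2}^{\ab}$ replacing $K_{N/2}^{\ab}$ when $\tau_{\ab}=0$), combined with the harmonicity identity $(\J^{\ab})^{N/2}_\t K_{N/2}^{\ab}(\t,\v)=0$ for $\t \ne \v$ established in Step~1 of the proof of Theorem~\ref{thm:main}. Writing $N=2m$ and using the decomposition \eqref{dec1} of $\partial_\t^{2m}$ as $(-1)^m(\J^{\ab})^m$ plus lower-order terms $f_{m,j}(\t)\partial_\t^j$ ($0 \le j < 2m$), one obtains off the diagonal the identity
\[
R_N^{\ab}(\t,\v) = \sum_{0 \le j < N} f_{m,j}(\t)\, \partial_\t^j K_{N/2}^{\ab}(\t,\v),
\]
modified by a constant additive term when $\tau_{\ab}=0$. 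This reduces the problem to obtaining uniform-in-$\v$ estimates on $\partial_\t^j K_{N/2}^{\ab}(\t,\v)$ for $j<N$.

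Next, for each such $j$ I would invoke Lemma~\ref{lem:diffK} to write $\partial_\t^j K_{N/2}^{\ab}(\t,\v) = \Gamma(N)^{-1} \int_0^\infty \partial_\t^j H_t^{\ab}(\t,\v)\, t^{N-1}\, dt$ and then apply Lemma~\ref{lem:thetaJP} to control the integrand pointwise. With $\t$ fixed in $(0,\pi)$, the endpoint factor $\Psi(\t,\v,t)$ from Lemma~\ref{lem:thetaJP} is uniformly bounded in $(\v,t)$, and the tail $t \ge 1$ contributes only an exponentially small remainder (using $\widetilde{H}_t^{\ab}$ when $\tau_{\ab}=0$). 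The short-time part $t \le 1$ is majorized by $\int_0^1 t^N [t^2+(\t-\v)^2]^{-1-j/2}\,dt$; the change of variables $t = |\t-\v|u$ shows that this is uniformly bounded in $\v$ for $j \le N-2$, whereas the borderline case $j = N-1$ produces only a logarithmic growth $\log(1/|\t-\v|)$ from this crude majorization.

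The proof is then completed by two observations. On the finite-measure space $((0,\pi),d\mu_{\ab})$, the $A_1^{\ab}$ condition applied with the whole interval as the enclosing ball forces $w \ge c > 0$ almost everywhere, and $A_p^{\ab}$ weights are automatically globally $\mu_{\ab}$-integrable. For part~(i), the locally logarithmic bound on $|R_N^{\ab}(\t,\v)|$ is already amply integrable to any power against $w\,d\mu_{\ab}$ (via reverse H\"older for $A_p$), yielding the $L^p$ membership. For part~(ii), the uniform lower bound on $w$ reduces the question to a genuine $L^\infty$ bound $|R_N^{\ab}(\t,\v)| \le C_\t$ in $\v$, and the hard part is to absorb the logarithm coming from the $j=N-1$ term---this requires exploiting cancellations inside $\partial_\t^{N-1} H_t^{\ab}$ (reflecting the essentially odd behaviour of $\partial_\t H_t^{\ab}$ in $\t-\v$ near the diagonal) that are invisible to the absolute-value bound of Lemma~\ref{lem:thetaJP}. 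That refinement is where I expect the main technical effort to be located; once it is carried out, (i) and (ii) both follow at once.
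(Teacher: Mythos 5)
Your skeleton coincides with the paper's proof: Corollary~\ref{cor:Rker} combined with the decomposition \eqref{dec1} and the harmonicity identity \eqref{HarmK} (with the constant correction $-1/\mu_{\ab}(0,\pi)$ when $\tau_{\ab}=0$) reduces everything, exactly as in the paper, to uniform bounds on $\partial_{\t}^{j}K_{m}^{\ab}(\t,\v)$ for $0\le j<2m=N$; Lemma~\ref{lem:diffK} and the crude bound of Lemma~\ref{lem:thetaJP} then dispose of all $j\le N-2$, and your weight-theoretic reductions ($1/w\in L^{\infty}$ for $w\in A_1^{\ab}$, $w\in L^1(d\mu_{\ab})$ for $w\in A_p^{\ab}$) are the ones the paper uses. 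Your observation that part (i) alone survives the logarithmic loss at $j=N-1$ (via $A_p^{\ab}\subset A_{\infty}^{\ab}$ and reverse H\"older) even matches the remark closing Section~\ref{sec:weightRiesz}, which notes precisely that (i), but not (ii), can be obtained from the unrefined estimates.

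There is, however, a genuine gap: the borderline term $j=N-1$, which you correctly identify as the crux of part (ii), is only flagged (``that refinement is where I expect the main technical effort to be located'') and never carried out --- and that refinement \emph{is} the proof, occupying most of Section~\ref{sec:weightRiesz}. Concretely, the paper removes the logarithm not by a reflection or odd-symmetry cancellation under the $t$-integral, but by a sharpened size estimate: Lemma~\ref{lem:oddthetaH} re-derives the bounds for $\partial_{\t}^{2m-1}H_t^{\ab}$ keeping the factor $|\partial_{\t}\q|$ intact instead of majorizing it by $\sqrt{\q}$ as in \eqref{est4.5} (this requires reworking the estimates of \cite{NSS} in four separate parameter regimes), and Lemma~\ref{lem:derqcomp} shows that on compact subsets of $(0,\pi)$ one has $\partial_{\t}\q=\frac12\sin\frac{\t-\v}{2}+Q$ with $|Q|\lesssim\q$, hence $|\partial_{\t}\q|\lesssim|\t-\v|+\q$. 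Fed into the integral lemmas (Lemma~\ref{lem:intest2}, Lemma~\ref{lem:intqest}), the extra factor $|\t-\v|$ exactly compensates the $\log(1/|\t-\v|)$ produced by the crude majorization, giving $|\partial_{\t}^{2m-1}K_m^{\ab}(\t,\v)|\lesssim 1$ uniformly in $\v$. Your heuristic about the near-diagonal odd behaviour of $\partial_{\t}H_t^{\ab}$ is the correct moral explanation --- Lemma~\ref{lem:derqcomp} is its quantitative form, whereas the literal reflection $\v\mapsto 2\t-\v$ is deployed in the paper only for Lemma~\ref{lem:PV=0} --- but without executing this step the key estimate, and with it part (ii), remains unproved.
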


\begin{proof}[Proof of Theorem \ref{cor:mainmain}.]
Since $R_N^{\ab}$ is a Calder\'on-Zygmund operator, see \cite[Theorem 5.1]{NSS}, 
item (a) is a consequence of Theorem~\ref{thm:main} and standard arguments based on the 
Calder\'on-Zygmund theory. A crucial point are weighted $L^p$-boundedness properties of the truncated
integrals maximal operator associated with $R_N^{\ab}$. See
\cite[p.\,515]{BFRT} for the details given in the ultraspherical situation. 

To prove item (b) we fix $f \in L^p(wd\mu_{\ab})$, $w \in A_p^{\ab}$, $1 \le p < \infty$, 
and take a sequence $\{ f_n \} \subset C_c^{\infty}(0,\pi)$ approximating $f$ in $L^p(wd\mu_{\ab})$.
We may assume that $f_n$ converges to $f$ also pointwise a.e. 
Noting that $w \in A_p^{\ab}$ implies $w^{-p'/p} \in A_{p'}^{\ab}$, $1<p<\infty$, $1/p+1/p'=1$, and using
Lemma~\ref{lem:weightRiesz} (item (i) for $1<p<\infty$, and item (ii) in case $p=1$) we see that
\[
\int_0^{\pi} R_N^{\ab}(\t,\v) f_{n}(\v)\, d\mu_{\ab}(\v)
\xrightarrow{n \to \infty}
\int_0^{\pi} R_N^{\ab}(\t,\v) f(\v)\, d\mu_{\ab}(\v), 
\qquad \t \in (0,\pi).
\]
This, together with Theorem~\ref{thm:main} and the boundedness properties of $R_N^{\ab}$, 
implies the desired conclusion.
\end{proof}
Finally, we justify Theorem \ref{thm:main2}. For this purpose, we need the following result which for the
restricted range $\ab \ge -1/2$ was obtained in \cite{L}.

\begin{pro} \label{prop:CZRi}
Let $\ab > -1$ and let $N \ge 1$. Then $\mathcal{R}_N^{\ab}$ is a Calder\'on-Zygmund operator in the sense
of the space of homogeneous type $((0,\pi),d\mu_{\ab},|\cdot|)$, associated with the kernel 
$\mathcal{R}_N^{\ab}(\t,\v)$. In particular, $\mathcal{R}_N^{\ab}$ 
extends uniquely from $L^2(d\m_{\ab})\cap L^p(wd\mu_{\ab})$ to a bounded operator on
$L^p(wd\mu_{\ab})$, $w\in A_p^{\ab}$, $1<p<\infty$, and, when $p=1$,
to a bounded operator from $L^1(wd\mu_{\ab})$ to weak $L^1(wd\mu_{\ab})$, $w \in A_1^{\ab}$.
\end{pro}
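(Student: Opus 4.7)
The plan is to verify, in the space of homogeneous type $((0,\pi),d\mu_{\ab},|\cdot|)$, the three standard ingredients of a Calder\'on-Zygmund operator: $L^2(d\mu_{\ab})$-boundedness, which is already provided by \cite[Corollary 3.2]{L} for all $\ab>-1$; the standard size and smoothness bounds for the kernel $\mathcal{R}_N^{\ab}(\t,\v)$; and the association property stating that $\mathcal{R}_N^{\ab}f(\t)=\int_0^\pi \mathcal{R}_N^{\ab}(\t,\v)f(\v)\,d\mu_{\ab}(\v)$ for suitable $f$ and $\t\notin\support f$. With these at hand, the general Calder\'on-Zygmund theory on spaces of homogeneous type delivers the weighted $L^p$ and weak $(1,1)$ conclusions.

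First I would expand the composed operator $D^N$ by successively applying $\delta=\partial$ and $\delta^*=-\partial-(\a+1/2)\cot\frac{\t}{2}+(\b+1/2)\tan\frac{\t}{2}$, obtaining a representation
\[
D^N=\sum_{j=0}^{N}\psi_{N,j}(\t)\,\partial_{\t}^{\,j},
\]
where each $\psi_{N,j}$ is a smooth function on $(0,\pi)$ that blows up at the endpoints at worst as a finite product of $\cot\frac{\t}{2}$ and $\tan\frac{\t}{2}$. Substituting this into the integral defining $\mathcal{R}_N^{\ab}(\t,\v)$ and invoking Lemma~\ref{lem:thetaJP} to control each $\partial_{\t}^{\,j}H_t^{\ab}(\t,\v)$, then integrating in $t$, would yield the pointwise size estimate $|\mathcal{R}_N^{\ab}(\t,\v)|\lesssim 1/\mu_{\ab}(B(\t,|\t-\v|))$ expected from Calder\'on-Zygmund theory. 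The smoothness bound is obtained by the same procedure with one additional derivative acting either in $\t$ or in $\v$; the analogue of Lemma~\ref{lem:thetaJP} for mixed derivatives $\partial_{\v}^{k}\partial_{\t}^{\,j}H_t^{\ab}$ is established by the same technique used to prove Lemma~\ref{lem:thetaJP} itself and is essentially already present in \cite[Section~5]{NSS}.

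The association property will be derived by arguments parallel to the chain of identities \eqref{ch1}. Writing $\mathcal{R}_N^{\ab}f(\t)$ via the spectral series, interchanging summation with integration in $t$ by means of Proposition~\ref{prop:FJdecay} and \eqref{d5}, and then applying Fubini's theorem (legitimate away from the diagonal thanks to the exponential decay of $H_t^{\ab}$ in $t\ge 1$ and to Lemma~\ref{lem:thetaJP} for $t\le 1$), we reduce the matter to an integral against $D^N_{\t}H_t^{\ab}(\t,\v)t^{N-1}\,dt$, i.e.\ against the claimed kernel.

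The main obstacle is handling the extended range $\ab>-1$, not merely $\ab\ge -1/2$ as covered in \cite{L}. When $\a\in(-1,-1/2)$ or $\b\in(-1,-1/2)$, the coefficients $\cot\frac{\t}{2}$ and $\tan\frac{\t}{2}$ in $\delta^*$ produce factors $\psi_{N,j}(\t)$ that are unbounded at the endpoints, and these must be shown to be absorbed by the factors $(t^2+\t^2+\v^2)^{-\a-1/2}(t^2+(\pi-\t)^2+(\pi-\v)^2)^{-\b-1/2}$ emerging from Lemma~\ref{lem:thetaJP}, together with the vanishing of the measure $d\mu_{\ab}$ at $\t=0,\pi$. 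This requires carefully tracking the endpoint behavior in the size and smoothness estimates, in the spirit of the analogous verification carried out for $R_N^{\ab}$ in \cite[Section~5]{NSS}; the adaptation to $D^N$ is essentially bookkeeping, as the qualitative structure of the bounds in Lemma~\ref{lem:thetaJP} is insensitive to replacing some $\partial_{\t}$'s by $\delta^*$'s provided the singular coefficients are accounted for.
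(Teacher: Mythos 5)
Your scaffolding (the $L^2$ bound from \cite[Corollary 3.2]{L}, standard kernel estimates, kernel association) is the right shape, but the core of your plan --- expanding $D^N=\sum_{0\le j\le N}\psi_{N,j}(\t)\,\partial_{\t}^j$ and estimating each term via Lemma~\ref{lem:thetaJP} --- has a genuine gap: it destroys cancellations between the terms that are provably indispensable, so the endpoint ``bookkeeping'' you defer cannot be carried out. Concretely, take $N=4$ and $\a\neq\pm 1/2$. Writing $w(\t)=-(\a+1/2)\cot\frac{\t}2+(\b+1/2)\tan\frac{\t}2$, one computes $D^4=\partial_\t^4-2w\,\partial_\t^3+(w^2-2w')\,\partial_\t^2+(ww'-w'')\,\partial_\t$, and $\psi_{4,1}=ww'-w''=(2\a+1)(1-2\a)\,\t^{-3}+O(\t^{-1})$ as $\t\to 0^+$. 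On the other hand, by \eqref{derP} the spectral series for $K_2^{\ab}$ shows $\partial_\t K_2^{\ab}(\t,\v)=O(\sin\t)$ for fixed $\v$, so the single term $\psi_{4,1}(\t)\,\partial_\t K_2^{\ab}(\t,\v)$ blows up like $\t^{-2}$ (and $\psi_{4,2}\,\partial_\t^2K_2^{\ab}$ likewise), whereas the true kernel stays bounded there: off the diagonal $D^4_\t K_2^{\ab}(\t,\v)=-2\tau_{\ab}^2K_1^{\ab}(\t,\v)+\tau_{\ab}^4K_2^{\ab}(\t,\v)$ by \eqref{HarmK} and its analogues, while the Calder\'on--Zygmund size bound $|\mathcal{R}_4^{\ab}(\t,\v)|\lesssim 1/\mu_{\ab}(B(\t,|\t-\v|))$ demands $O(1)$ in that regime. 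So no sharpening of the bounds on $\partial_\t^jH_t^{\ab}$ can close a termwise argument --- note also that the $t\ge 1$ part of Lemma~\ref{lem:thetaJP} carries no endpoint decay whatsoever --- and your expansion is structurally blind to the fact, central to the paper, that the even-order $\mathcal{R}_N^{\ab}$ are not singular operators at all (Theorem~\ref{thm:main2}(b), Remark~\ref{rem:bndRe}).

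The missing idea is the exact algebraic identity that makes the interlaced derivative special: $D^2=\delta^*\delta=\J^{\ab}-\tau_{\ab}^2$. The paper's proof uses the resulting binomial decompositions \eqref{d9}, whence \eqref{decRec}--\eqref{decRoc}: $\mathcal{R}_{2m}^{\ab}$ collapses to $\id$ plus a finite combination of the potential operators $(\J^{\ab})^{-j}$, and $\mathcal{R}_{2m+1}^{\ab}$ to $R_1^{\ab}$ plus a combination of $\partial_\t(\J^{\ab})^{-j-1/2}$; all singular coefficients disappear. Calder\'on--Zygmund-ness then follows from \cite[Theorem 5.1]{NSS} for $R_1^{\ab}$, from the Laplace--Stieltjes multiplier result of \cite{NSS} for $(\J^{\ab})^{-j}$, and from a direct verification for $\partial_\t(\J^{\ab})^{-j-1/2}$ via \eqref{ker} and \cite[Corollary~3.5, Lemma 3.7, Corollary 3.9]{NSS}; this covers the full range $\ab>-1$. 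To salvage your route you would have to recombine your expansion back into powers of $\J^{\ab}$ before estimating --- which is precisely the paper's decomposition, not a termwise bound.
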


Since $\mathcal{R}_N^{\ab}$ coincides, up to a multiplicative constant, with $\big(R_N^{\ab}\big)^+$ investigated in \cite{L}, for $\ab \ge -1/2$ Proposition~\ref{prop:CZRi} 
is covered by \cite[Theorem 2.3]{L}. To deal with all $\ab>-1$ we present a unified treatment based on the technique developed recently in \cite{NSS} and on the already known case of the first order Riesz-Jacobi transform $\mathcal{R}_1^{\ab} = R_1^{\ab}$, see \cite[Theorem 5.1]{NSS}. From the present perspective this method seems to be more natural than the one in \cite{L}. Nevertheless, 
proceeding in the spirit of \cite{L} allows to achieve Proposition~\ref{prop:CZRi} as well.

\begin{proof}[Proof of Proposition~\ref{prop:CZRi}.]
Observe that $D^2 = \delta^*\delta= \J^{\ab}-\tau_{\ab}^2$ and so we have the decompositions
\begin{align} \label{d9}
    D^{2m} & =  
    \sum_{j=0}^{m} \binom{m}{j} (-\tau_{\ab}^2)^{j} \big(\J^{\ab}\big)^{m-j}, \qquad m \ge 1, \\ \nonumber
	 D^{2m + 1} & = 
	 \sum_{j=0}^{m} \binom{m}{j} (-\tau_{\ab}^2)^{j} \partial_{\t} \big(\J^{\ab}\big)^{m-j}, \qquad m \ge 0.
\end{align}
Thus, for $f \in C_c^{\infty} (0,\pi)$ and $\t \in (0,\pi)$, we get
\begin{align}
\mathcal{R}_{2m}^{\ab} f(\t) & =  
f(\t) + \sum_{j=1}^m \binom{m}{j}(-\tau_{\ab}^2)^j 
\big( \J^{\ab}\big)^{-j} f(\t), 
\qquad m \ge 1, \quad \tau_{\ab} \ne 0, \label{decRec}\\
\mathcal{R}_{2m+1}^{\ab} f(\t) & = 
R_1^{\ab} f(\t) +
    \sum_{j=1}^m \binom{m}{j} (-\tau_{\ab}^2)^j \partial_{\t} \big(\J^{\ab}\big)^{-j-1/2} f(\t),
\qquad m \ge 0, \quad \tau_{\ab} \ne 0; \label{decRoc}
\end{align}
here we used the identities \eqref{idenJ1} and \eqref{idenJ2}. 
The case $\tau_{\ab} = 0$ is even easier and we simply get
$$
\mathcal{R}_{2m}^{\ab} f = \Pi_0 f, \quad m \ge 1, \qquad \textrm{and} \qquad 
\mathcal{R}_{2m+1}^{\ab} f = R_1^{\ab} f, \quad m \ge 0. 
$$

By \cite[Theorem 5.1]{NSS} we know that $R_1^{\ab}$ is a Calder\'on-Zygmund operator, hence our task reduces to showing that 
$$
\big( \J^{\ab}\big)^{-j} \quad \textrm{and} \quad \partial_{\t} \big( \J^{\ab}\big)^{-j - 1/2}, 
\qquad j \ge 1, \quad \tau_{\ab} \ne 0,
$$
are Calder\'on-Zygmund operators associated with the space $((0,\pi),d\mu_{\ab},|\cdot|)$,
with the corresponding kernels $K_j^{\ab}(\t,\v)$ and $\partial_{\t}K_{j+1/2}^{\ab}(\t,\v)$, respectively.
In the first case the conclusion  
is a consequence of a more general result. Namely, the potential operators 
$\big( \J^{\ab}\big)^{-\sigma}$, $\sigma > 0$, $\tau_{\ab} \ne 0$, are special instances of multipliers
of Laplace-Stieltjes transform type investigated in \cite{NSS}, the related measure being 
$d\nu(t) = \frac{1}{\Gamma(\sigma)} t^{\sigma - 1} \, dt$. Then the assumption \cite[(18)]{NSS}
is trivially satisfied and so 
\cite[Theorem 5.1]{NSS} implies the desired property.

In the second case we first observe that the operators in question are bounded on $L^2(d\mu_{\ab})$. This
can be seen by means of \eqref{d2}, Proposition~\ref{prop:FJdecay}, \eqref{derP} and the fact that
$\{\frac{1}2\sin\t \P_n^{\a+1,\b+1}:n \ge 0\}$ is an orthonormal basis in $L^2(d\mu_{\ab})$.
Further, applying Lemmas~\ref{lem:Step1} and \ref{lem:diffK} we see that they 
are integral operators in the usual sense with the kernels
\begin{align} \label{ker}
\partial_{\t} K_{j+ 1/2}^{\ab} (\t,\v)
=
\frac{1}{\Gamma(2j + 1)} \int_0^\infty \partial_{\t} H_t^{\ab} (\t,\v) t^{2j}
\, dt, \qquad \t,\v \in (0,\pi), \quad \t \ne \v, \quad j \ge 1.
\end{align}
This gives, in particular, kernel associations in the Calder\'on-Zygmund theory sense. 
It remains to show the standard estimates, i.e.\ \cite[(15), (19)]{NSS} with $\mathbb{B} = \mathbb{C}$,
for these kernels.
This, however, can be done in a straightforward manner by employing the method established in \cite{NSS}, 
see the proof of \cite[Theorem~4.1]{NSS}. 
For reader's convenience we now indicate the main steps.

We split the region of integration in \eqref{ker} onto $(0,1)$ and $(1,\infty)$, and treat 
the resulting parts separately. The latter part can be analyzed by means of \cite[Corollary 3.9]{NSS}. 
On the other hand, the remaining part can be handled by using \cite[Corollary~3.5 and Lemma 3.7]{NSS} 
and the boundedness of $\q$ there, see \eqref{q_for}. 

This finishes showing that $\mathcal{R}_{N}^{\ab}$, $N \ge 1$, are Calder\'on-Zygmund operators. 
The fact that $\mathcal{R}_{N}^{\ab} (\t,\v)$ is
the Calder\'on-Zygmund kernel of $\mathcal{R}_{N}^{\ab}$ 
becomes clear after tracing in detail
the proof of Theorem~\ref{thm:main2} sketched below.
\end{proof}

\begin{proof}[Proof of Theorem \ref{thm:main2}.]
We first verify the result for $f \in C_c^{\infty}(0,\pi)$ and thus proceed as in the proof of 
Theorem~\ref{thm:main}. Then it is enough to reduce the problem to the case $N=1$ since the integral
representation for $\mathcal{R}_1^{\ab} = R_1^{\ab}$ is already provided by Theorem~\ref{thm:main}.
To obtain the desired reduction one repeats the arguments from Step~1 of the proof of Theorem~\ref{thm:main},
with the aid of Lemmas~\ref{lem:diffK} and \ref{lem:Step1}, and the decompositions 
\eqref{decRec} and \eqref{decRoc}.

Having the representations of the theorem for all $f \in C_c^{\infty}(0,\pi)$, we pass to a general
$f$ as in the proof of Theorem~\ref{cor:mainmain}, by means of Proposition~\ref{prop:CZRi} and
an analogue of Lemma~\ref{lem:weightRiesz}. The latter easily follows by \eqref{d9} and the arguments
from the proof of Lemma~\ref{lem:weightRiesz}.
\end{proof}

\begin{rem} \label{rem:bndRe}
The decomposition \eqref{decRec} combined with \cite[Theorem 2.3]{NR} reveals that $\mathcal{R}_N^{\ab}$,
for $N$ even, are bounded on $L^1(d\mu_{\ab})$ and on $L^{\infty}(0,\pi)$.
\end{rem}

\section{Proofs of Lemmas \ref{lem:thetaJP} and \ref{lem:Step1}} \label{sec:thetaJP}

To prove Lemma~\ref{lem:thetaJP} we will need some preparatory results that were obtained in the previous papers \cite{NoSj2,NSS}.
To state them we shall use the same notation as in \cite{NSS}.
For $\a >-1/2$ we denote by $d\Pi_\a$ the probability measure on the interval $[-1,1]$ given by the density
$$
\pia = \frac{\Gamma(\a+1)}{\sqrt{\pi}\Gamma(\a+1/2)} \, \big(1-u^2\big)^{\a-1/2}\, du,
$$
and in the limit case $d\Pi_{-1/2}$ is the sum of point masses at $-1$ and $1$ divided by $2$.
Further, let
$$
d\abpiK
= d\Pi_{(\a+1)K - (1-K)/2}
= \begin{cases}
        d\Pi_{-1\slash 2}, & K=0 \\
        d\Pi_{\a + 1}, &  K=1
    \end{cases}, \qquad \a >-1,
$$
and put
\begin{equation} \label{q_for}
\q = q(\t,\v,u,v) = 1 - u \st \svp - v \ct \cvp,
\qquad \t,\v \in (0,\pi), \quad u,v \in [-1,1].
\end{equation}
For further reference, we also introduce the function
\[
\Pi_{\a} (u) = 
\frac{\Gamma(\a+1)}{\sqrt{\pi}\Gamma(\a+1/2)} 
\int_0^u \big(1-w^2\big)^{\a-1/2}\, dw,
\qquad -1 < u < 1, \quad -1 < \a < -1/2,
\]
which is odd in $(-1,1)$ and negative for $u \in (0,1)$.
Moreover, for each fixed $-1 < \a < -1/2$, see \cite[Lemma~2.2]{NSS}, it satisfies
\begin{equation}\label{smallab}
 |\Pi_{\a}  (u) | \, du \simeq |u| d\Pi_{\a + 1} (u), \qquad u \in (-1,1).
\end{equation}
In the sequel we will frequently use, sometimes without mentioning, the following elementary relations, see \cite[p.\,738]{NoSj},
\begin{equation}
\begin{split} \label{elem}
1 - \ct \cvp & = \sin^2\frac{\t - \v}{4} + \sin^2\frac{\t + \v}{4} \simeq \t^2 + \v^2, 
\qquad \t,\v \in (0,\pi),\\
1 - \st \svp & = \sin^2\frac{\t - \v}{4} + \cos^2\frac{\t + \v}{4} \simeq (\pi - \t)^2 + (\pi - \v)^2, 
\qquad \t,\v \in (0,\pi),\\
1 - \st \svp - \ct \cvp & = 2 \sin^2\frac{\t - \v}{4} \simeq (\t - \v)^2, 
\qquad \t,\v \in (0,\pi).
\end{split}
\end{equation}

\begin{lem}[{\cite[Lemma A.2]{NoSj2}}, {\cite[Lemma 3.2 (a)]{NSS}}]
\label{lem:intest}
Let $\kappa \ge 0$ and $\gamma$, $\nu$ be such that $\gamma > \nu +1/2 \ge 0$. Then
$$
\int \frac{d\Pi_{\nu}(s)}{(D-Bs)^{\kappa}(A-Bs)^{\gamma}} \simeq
    \frac{1}{(D-B)^{\kappa} A^{\nu+1/2} (A-B)^{\gamma-\nu-1/2}}, \qquad 0 \le B < A \le D.
$$
\end{lem}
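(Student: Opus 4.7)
My plan is to prove the estimate by a case split based on whether $A-B$ is comparable to $A$ or much smaller, corresponding to whether the integrand is essentially flat across $[-1,1]$ or sharply peaked near $s=1$. First, in the boundary case $\nu=-1/2$ the measure is a sum of point masses at $\pm 1$, so $I = \frac{1}{2}\bigl[(D-B)^{-\kappa}(A-B)^{-\gamma} + (D+B)^{-\kappa}(A+B)^{-\gamma}\bigr]$; the first summand dominates and, since $\nu+1/2=0$, matches the right-hand side. For $\nu>-1/2$ the measure has density $c_{\nu}(1-s^{2})^{\nu-1/2}$ and I proceed as follows.

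\emph{Case A:} $B \le A/2$. Then $A-B \simeq A$, and since $D \ge A \ge 2B$ one also has $D-B \ge D/2$, hence $A-Bs \simeq A$ and $D-Bs \simeq D-B$ uniformly in $s\in[-1,1]$. Factoring these out and using that $\Pi_{\nu}$ is a probability measure gives $I \simeq (D-B)^{-\kappa} A^{-\gamma}$, which agrees with the right-hand side since $A-B \simeq A$.

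\emph{Case B:} $B > A/2$, so $A \simeq B$ and the mass of the integrand concentrates near $s=1$. I split $[-1,1]=[-1,0]\cup[0,1]$. On $[-1,0]$ both $A-Bs \simeq A$ and $D-Bs \simeq D$, so the contribution is $\lesssim D^{-\kappa}A^{-\gamma}$, and this is controlled by the right-hand side since $D \ge D-B$ and $A^{\gamma}=A^{\nu+1/2}A^{\gamma-\nu-1/2} \ge A^{\nu+1/2}(A-B)^{\gamma-\nu-1/2}$. On $[0,1]$ I substitute $s=1-r$, turning the integrand (up to constants and using $B\simeq A$, $1+s \simeq 1$) into
\[
\frac{r^{\nu-1/2}}{\bigl[(D-B)+Ar\bigr]^{\kappa}\bigl[(A-B)+Ar\bigr]^{\gamma}},\qquad r \in (0,1),
\]
and then sub-split $r$ at the thresholds $(A-B)/A$ and $(D-B)/A$ (with obvious modifications when a threshold exceeds $1$). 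On $(0,(A-B)/A)$ both bracketed factors are $\simeq A-B$ and $\simeq D-B$, respectively, and the integral evaluates to a constant multiple of the target $(D-B)^{-\kappa}A^{-(\nu+1/2)}(A-B)^{-(\gamma-\nu-1/2)}$, producing both the matching upper and lower bounds. On the remaining sub-regions one bracket (or both) becomes $\simeq Ar$, and a direct calculation, using $\gamma>\nu+1/2$ and $A-B \le D-B$ (from $A \le D$), shows that those contributions are also bounded by the target.

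The main obstacle is the bookkeeping in Case B, and especially ensuring that on each sub-region of $(0,1)$ the exponent of $r$ in the integrand is strictly less than $-1$ so that the lower endpoint dominates and no logarithmic terms appear; this is precisely what the strict inequality $\gamma>\nu+1/2$ guarantees. The degenerate configurations (e.g.\ $D=A$, in which the middle sub-region disappears) are absorbed by the same estimates.
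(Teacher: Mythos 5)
Your proof is correct, and it is worth noting that the paper itself contains no internal proof to compare against: Lemma~\ref{lem:intest} is imported verbatim from \cite[Lemma A.2]{NoSj2} and \cite[Lemma 3.2 (a)]{NSS}. Your blind argument is a complete, self-contained substitute and proceeds along the same standard lines as the Jacobi-setting literature: isolate the atomic case $\nu=-1/2$; observe that for $\nu>-1/2$ the factors can only degenerate at $s=1$; substitute $s=1-r$; and split at the thresholds $(A-B)/A$ and $(D-B)/A$ where each bracket switches from constant to power behavior. I checked the individual comparisons and they all hold: in Case A, $B\le A/2\le D/2$ indeed gives $A-Bs\simeq A\simeq A-B$ and $D-Bs\simeq D-B$ uniformly on $[-1,1]$; in Case B, the piece over $\big(0,(A-B)/A\big)$ evaluates (using $\nu+1/2>0$, which is exactly why the atomic case must be treated separately) to a constant times the right-hand side, and since the integrand is nonnegative this one piece also furnishes the lower bound; on the remaining pieces the exponents $\nu-1/2-\gamma$ and $\nu-1/2-\gamma-\kappa$ are strictly below $-1$ by $\gamma>\nu+1/2$ and $\kappa\ge 0$, so the lower endpoints dominate with no logarithms, and the resulting quantities are absorbed into the target via $A-B\le A$, $A-B\le D-B$ and $D-B\le D$. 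The implicit constants depend only on $\kappa$, $\gamma$, $\nu$, as the uniformity in $0\le B<A\le D$ requires, and the degenerate configurations ($D=A$, thresholds exceeding $1$) are covered as you indicate. No gaps.
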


The next result is a refined specification of \cite[Corollary 3.5]{NSS}. 
The absence of
differentiations with respect to $t$ and $\v$ allows for 
more precise estimates than those established in \cite{NSS}.

\begin{lem}\label{lem:tNSS}
Let $j \ge 1$ be fixed.
Then the following estimates hold uniformly in $t \le 1$ and
$\t,\v \in (0,\pi)$.
\begin{itemize}
\item[(i)]
If $\a,\b \ge -1\slash 2$, then
\begin{align*}
\big| \partial_\t^j
H_{t}^{\ab}(\t,\v)
\big|
    \lesssim
t
\iint
\frac{\pia \, \pib}{(t^2 + \q)^{   \a + \b + 2 + j\slash 2  }}.
\end{align*}
\item[(ii)]
If $-1 < \a < -1\slash 2 \le \b$, then
\begin{align*}
\big|
     \partial_\t^j
H_{t}^{\ab}(\t,\vp)
\big|
    \lesssim
t
\sum_{K=0,1}
\sum_{k=1,2}
\bigg( \st+\svp \bigg)^{Kk}
\iint
\frac{\piK \, \pib}{(t^2 + \q)^{   \a + \b +  2 + (j + Kk)\slash 2  }}.
\end{align*}
\item[(iii)]
If $-1 < \b < -1\slash 2 \le \a$, then
\begin{align*}
\big|
    \partial_\t^j
H_{t}^{\ab}(\t,\v)
\big|
    \lesssim
t
\sum_{R=0,1}
\sum_{r=1,2}
\bigg( \ct + \cvp \bigg)^{Rr}
\iint
\frac{\pia \, \piR}{(t^2 + \q)^{   \a + \b + 2 + (j + Rr)\slash 2  }}.
\end{align*}
\item[(iv)]
If $-1 < \a,\b < -1\slash 2$, then
\begin{align*}
\big|
    \partial_\t^j
H_{t}^{\ab}(\t,\v)
\big|
    &\lesssim
t
\sum_{K,R=0,1}
\sum_{k,r=1,2}
\bigg( \st+\svp \bigg)^{Kk} \bigg( \ct + \cvp \bigg)^{Rr}
\\
& \qquad \qquad \times
\iint
\frac{\piK \, \piR}{(t^2 + \q)^{   \a + \b + 2 + (j + Kk +Rr)\slash 2  }}.
\end{align*}
\end{itemize}
\end{lem}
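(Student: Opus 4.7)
The plan is to start from the integral representation underlying \cite[Proposition 3.1]{NSS} (and its analogue in \cite{NoSj2}), which in the regime $t\le 1$ yields an upper bound of essentially the form
\[
H_t^{\ab}(\t,\v) \lesssim t \iint \frac{\pia\,\pib}{(t^2+\q)^{\a+\b+2}}
\]
in the basic case $\a,\b \ge -1/2$, and then to differentiate it $j$ times in $\t$ under the integral sign. This is the quantitative skeleton already behind \cite[Corollary 3.5]{NSS}, so the only new ingredient required for case (i) is a careful counting of how much each $\t$-derivative degrades the exponent.

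The key refinement, which produces the improved exponent $j/2$ instead of the naive $j$, is the pointwise bound $|\partial_\t\q|\lesssim \sqrt{\q}$. Writing $\partial_\t\q = \tfrac{1}{2}(v\st\cvp - u\ct\svp)$, one checks this inequality using $|u|,|v|\le 1$, the trigonometric identity $(\st\cvp-\ct\svp)^2 = 1-(\st\svp+\ct\cvp)^2$, and the elementary relations \eqref{elem}; essentially the same computation is implicit in \cite[Lemma 3.8]{NSS}. Consequently, each differentiation $\partial_\t (t^2+\q)^{-\gamma} = -\gamma\,\partial_\t\q\cdot(t^2+\q)^{-\gamma-1}$ satisfies $|\partial_\t (t^2+\q)^{-\gamma}|\lesssim (t^2+\q)^{-\gamma-1/2}$, and an induction on $j$ via repeated Leibniz and chain rule applications -- also noting that $\partial_\t\sht = \tfrac{1}{2}\cht$ is bounded for $t\le 1$ -- yields case (i).

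Cases (ii), (iii), (iv) follow by integration by parts in $u$ and/or $v$ on $[-1,1]$ whenever the corresponding index lies in $(-1,-1/2)$, in the spirit of \cite{NSS}. A single integration by parts in $u$ replaces $d\Pi_\a$ by $d\Pi_{\a,1} = d\Pi_{\a+1}$ and forces one $\partial_u$ onto the denominator; since $\partial_u\q = -\st\svp$, this produces a factor $\st\svp$ and raises the exponent by one (i.e.\ $Kk=1$). A double integration by parts, needed when the first-order boundary term remains non-integrable, produces $(\st\svp)^2$ and $Kk=2$; the bound $\st\svp \le (\st+\svp)^2$ gives the correction factors $(\st+\svp)^{Kk}$ appearing in (ii). Cases (iii) and (iv) are analogous, with the integration by parts performed in $v$ or simultaneously in $u,v$; the estimate \eqref{smallab} controls the integrability near the endpoints.

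The main obstacle is the combinatorial bookkeeping in case (iv), where $j$ $\t$-derivatives and up to four integrations by parts have to be combined simultaneously. This is manageable because $\partial_\t$ interacts with $\q$ only through $\partial_\t\q$ while $\partial_u$ and $\partial_v$ interact through $\partial_u\q=-\st\svp$ and $\partial_v\q = -\ct\cvp$, so the three families of operations decouple and their individual bounds simply multiply to produce the stated estimate.
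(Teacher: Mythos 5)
Your case (i) follows the paper's route: the paper also differentiates the integral representation of $H_t^{\ab}$ from \cite[Proposition 2.3]{NSS} under the integral sign, identifies the integrand as a constant times $\sinh\frac{t}2\,\widetilde{\Psi}^{\a+\b+2}(t,\q)$, and obtains the gain of $j/2$ in the exponent from $|\partial_\t\q|\lesssim\sqrt{\q}\le\sqrt{t^2+\q}$, exactly your mechanism (formally via an improvement of \cite[(11)]{NSS} for $L=0$). Two slips there are harmless but worth fixing: one differentiates the representation (an identity), not the upper bound; and $\partial_\t\sht=0$, since $\sht$ does not depend on $\t$ (you meant $\partial_t$, which never acts here).

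In cases (ii)--(iv), however, your bookkeeping contains a genuine gap. Only a \emph{single} continuation/integration-by-parts step per small parameter occurs ($d\Pi_{\a+1}$ is already a finite measure, as $\a+1>-1/2$, and \eqref{smallab} converts $|\Pi_\a(u)|\,du$ into $|u|\,d\Pi_{\a+1}(u)$); there is no ``double integration by parts'', and the two values $k=1,2$ do \emph{not} count integrations by parts. They record how the single $\partial_u$ distributes under Leibniz once $\partial_\t^j$ is applied: if $\partial_u$ hits $\q$ in the denominator, one gets $\partial_u\q=-\st\svp\lesssim(\st+\svp)^2$ with the exponent raised by $1$, which is the $Kk=2$ term (your labeling of this as $Kk=1$, and of $(\st\svp)^2$ with exponent raised by $2$ as $Kk=2$, is internally inconsistent --- the latter would be $Kk=4$ and is not of the stated form); if instead $\partial_u$ hits one of the factors $\partial_\t\q$ produced by earlier $\t$-differentiations, one gets the mixed derivative $\partial_\t\partial_u\q=-\frac12\ct\svp$, of size $\lesssim\st+\svp$, which replaces a factor $|\partial_\t\q|\lesssim(t^2+\q)^{1/2}$ and yields precisely the $k=1$ term: factor $(\st+\svp)$ with exponent raised by $1/2$. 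These mixed terms are exactly the interaction your final paragraph asserts is absent (``the three families of operations decouple''), and they cannot be discarded: e.g.\ for $u$ near $-1$, so that $t^2+\q\simeq 1$, and $\t,\v$ near $0$, the mixed contribution $\simeq\svp$ dominates $\st\svp$, so neither of the two $k$-terms absorbs the other. This is the very point the paper handles by re-examining the proof of \cite[(11)]{NSS} with $L=0$ (the constraint analysis $k_1+k_3+k_4=K$, $r_1+r_3+r_4=R$, giving $2k_1+2k_3+k_4\in\{K,2K\}$ and its analogue in $v$); without that step, your argument produces bounds (missing all $k=1$ terms, and containing spurious $Kk=4$ terms) that do not match the estimates asserted in (ii)--(iv).
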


\begin{proof}
The reasoning is almost a repetition of the proof of \cite[Corollary 3.5]{NSS}. The only difference is that one should use an improvement of a special case of \cite[(11)]{NSS} instead of
\cite[Lemma 3.3]{NSS}. For reader's convenience we give some details, however, for any unexplained symbols we refer to \cite{NSS}.

First, observe that for each $j \ge 1$ the quantity $\partial_\t^j H_{t}^{\ab}(\t,\v)$ coincides with $\partial_\t^j \mathbb{H}_{t}^{\ab}(\t,\v)$. Further, $\Puv$ appearing in \cite[Proposition 2.3]{NSS} is equal to a constant times
$\sinh(\frac{t}2) \widetilde{\Psi}^{\a + \b +2}(t,\mathfrak{q})$.
Furthermore, assume for a moment that for $L=0$ the estimate \cite[(11)]{NSS} can be improved by restricting the summation on the right-hand side there to $k,r=1,2$.
Then combining
these facts with \cite[Proposition 2.3 and Lemma 2.2]{NSS} 
and the estimate $\sinh\frac{t}2 \simeq t$, $t \le 1$, we get the required bounds.
Hence, in order to finish the proof, it suffices to justify the above mentioned improvement of \cite[(11)]{NSS} for $L=0$. The details are as follows.

We proceed as in the proof of \cite[(11)]{NSS} and observe that the condition $L=0$ forces $k_2=r_2=k_5=r_5=0$ there. This leads to the last estimate in the proof of \cite[Lemma~3.3]{NSS} but with the summation restricted to $r_1+r_3+r_4=R$, $k_1+k_3+k_4=K$.
These constraints imply $2r_1 + 2r_3 + r_4 \in \{R,2R\}$ and $2k_1 + 2k_3 + k_4 \in \{K,2K\}$.
Since $\q$ is bounded, the conclusion follows.
\end{proof}

\begin{proof}[Proof of Lemma \ref{lem:thetaJP}.]
Since the estimate for large $t$ is a special case of \cite[Lemma 3.8]{NSS}, we focus on proving the bound for $t \le 1$.
The reasoning is based on the technique used in the proofs of \cite[Theorem~A.1]{NoSj2} and \cite[Theorem 6.1]{NSS}. Since the case $j=0$ is contained in the latter result, we assume that $j \ge 1$.
Further, we will show the desired estimate in the most involved case $-1 < \ab < -1/2$; the proofs of the remaining cases are similar and hence are omitted.

Notice that Lemma~\ref{lem:tNSS} reduces our task to showing that for $K,R=0,1$ and $k,r=1,2$ we have
\begin{align} \nonumber
& \bigg( \st+\svp \bigg)^{Kk} \bigg( \ct + \cvp \bigg)^{Rr}
\iint
\frac{\piK \, \piR}{(t^2 + 1 - u \st \svp - v \ct \cvp )^{\a + \b + 2 + (j + Kk +Rr)\slash 2  }} \\ \label{reduc}
    & \quad \lesssim
\Big( t^2+ \t^2+\v^2 \Big)^{-\a-1/2}
    \Big( t^2 + (\pi-\t)^2 + (\pi-\v)^2 \Big)^{-\b-1/2}
\frac{1}{\big[ t^2+(\t-\v)^2 \big]^{1+j/2}},
\end{align}
uniformly in $t \le 1$ and $\t,\v \in (0,\pi)$.
Applying Lemma~\ref{lem:intest} twice, first to the integral against
$\piR$ with the parameters
$\nu = (\b +1)R - (1-R)/2$, $\kappa = 0$, $\gamma = \a + \b + 2 + (j + Kk + Rr)/2$,
$A=t^2 + 1 - u \st \svp$, $B = \ct \cvp$, and then to the resulting integral against $\piK$ with
$\nu = (\a+1)K - (1-K)/2$, $\kappa = (\b+1)R + R/2$,
$\gamma = (\b+1)(1-R) + \a+1 - R/2 + (j + Kk + Rr)/2$,
$D = t^2 + 1$, $A = t^2 + 1 - \ct \cvp$, $B = \st \svp$,
we arrive at the relation
\begin{align*}
    & \iint
\frac{\piK \, \piR}{(t^2 + 1 - u \st \svp - v \ct \cvp )^{\a + \b + 2 + (j + Kk +Rr)\slash 2  }}\\
     & \simeq
\Big( t^2+ 1 - \ct \cvp \Big)^{-(\a+1)K -K/2}
    \Big( t^2 + 1 - \st \svp \Big)^{-(\b+1)R -R/2} \\
     & \qquad \times
\frac{1}{\big( t^2 + 1 - \st \svp - \ct \cvp \big)^
{ (\b + 1)(1-R) + (\a + 1)(1-K) - K/2 - R/2 + (j + Kk +Rr)/2 }}.
\end{align*}
Using \eqref{elem} we obtain that the left-hand side of \eqref{reduc} is comparable with
\begin{align*}
    & \bigg( \frac{\t^2 + \v^2}{t^2 + \t^2 + \v^2} \bigg)^{Kk/2}
\bigg( \frac{(\pi - \t)^2 + (\pi - \v)^2}{t^2 + (\pi - \t)^2 + (\pi - \v)^2} \bigg)^{Rr/2}\\
 & \times  \bigg( \frac{t^2 + (\t - \v)^2 }{t^2 + \t^2 + \v^2} \bigg)^
 {-(\a + 1/2)(1-K) + K(1-k/2)}
\bigg( \frac{t^2 + (\t - \v)^2 }{t^2 + (\pi - \t)^2 + (\pi - \v)^2} \bigg)^
{-(\b + 1/2)(1-R) + R(1-r/2)} \\
 & \times 
    \Big( t^2+ \t^2+\v^2 \Big)^{-\a-1/2} \Big( t^2 + (\pi-\t)^2 + (\pi-\v)^2 \Big)^{-\b-1/2}
\frac{1}{\big[ t^2+(\t-\v)^2 \big]^{1+j/2}}.
\end{align*}
This leads directly to the desired bound.
\end{proof}

\begin{proof}[Proof of Lemma \ref{lem:Step1}.]
We consider the case $\tau_{\ab} \ne 0$, leaving the opposite one to the reader.
Proceeding in a similar way as in the chain of identities \eqref{ch1}, i.e.\ using Proposition~\ref{prop:FJdecay}, \eqref{d5} and Fubini's theorem, we get
\begin{equation*}
\partial_{\t}^j \big(\J^{\ab}\big)^{-N/2} f (\t)
=
\frac{1}{\Gamma(N)} \int_0^\infty \int_0^{\pi} \partial_{\t}^j {H}_t^{\ab}(\t,\v) f (\v) \, d\mu_{\ab}(\v) \,
    t^{N-1}\, dt,
\qquad \t \in (0,\pi).
\end{equation*}
By Lemma~\ref{lem:diffK} it suffices now to check that the order of integration on the right-hand side above can be exchanged.
This will follow from Fubini's theorem once we ensure that
$$
\int_0^\infty \int_0^{\pi} | \partial_{\t}^j H_t^{\ab}(\t,\v)  f (\v) | \, d\mu_{\ab}(\v) \,
    t^{N-1}\, dt < \infty.
$$

Notice that since $f \in C_c^\infty(0,\pi)$, $d\mu_{\ab}(\v)$ is comparable with $d\v$ on $\support f$. Taking into account Lemma~\ref{lem:thetaJP} and applying
the Fubini-Tonelli theorem we get
\begin{align*}
\int_0^\infty \int_0^{\pi} | \partial_{\t}^j H_t^{\ab}(\t,\v)  f (\v) | \, d\mu_{\ab}(\v) \,
    t^{N-1}\, dt
    & \lesssim
\int_0^{\pi} \bigg( \int_0^1 \frac{t^N \, dt}{\big[ t^2+(\t-\v)^2 \big]^{1+j/2}}
+
\int_1^\infty e^{-ct} t^{N-1} \, dt
\bigg) \, d\v \\
    & \lesssim
\int_0^{\pi} \bigg( \int_0^1 \frac{t \, dt}{ t^2+(\t-\v)^2 }
+ 1 \bigg) \, d\v \\
    & \simeq
\int_0^{\pi} \log \bigg( 1 + \frac{1}{(\t -\v)^2 } \bigg) \, d\v
    < \infty.
\end{align*}
The conclusion follows. 
\end{proof}

\section{Proof of Lemma~\ref{lem:weightRiesz}}\label{sec:weightRiesz}

In this section we gather various technical results needed to 
conclude finally Lemma~\ref{lem:weightRiesz}. We start with a local refinement of Lemma~\ref{lem:tNSS} in case when $j$ is odd.

\begin{lem}\label{lem:oddthetaH}
Let $m \ge 1$ be fixed.
Then the following estimates hold uniformly in $t \le 1$ and
$\t,\v \in (0,\pi)$.
\begin{itemize}
\item[(i)]
If $\a,\b \ge -1\slash 2$, then
\begin{align*}
\big| \partial_\t^{2m-1}
H_{t}^{\ab}(\t,\v)
\big|
    \lesssim
t
\iint
\frac{|\partial_\t \q | \, \pia \, \pib}{(t^2 + \q)^{   \a + \b + 2 + m  }}.
\end{align*}
\item[(ii)]
If $-1 < \a < -1\slash 2 \le \b$, then
\begin{align*}
\big|
     \partial_\t^{2m-1}
H_{t}^{\ab}(\t,\vp)
\big|
    \lesssim
t
\sum_{K=0,1}
\iint
\frac{|\partial_\t \q  | \, \piK \, \pib}{(t^2 + \q)^{   \a + \b +  2 + m + K} }
+
t\iint
\frac{d\Pi_{\alpha + 1}(u) \, \pib}{(t^2 + \q)^{   \a + \b +  2 + m} }.
\end{align*}
\item[(iii)]
If $-1 < \b < -1\slash 2 \le \a$, then
\begin{align*}
\big|
    \partial_\t^{2m-1}
H_{t}^{\ab}(\t,\v)
\big|
    \lesssim
t
\sum_{R=0,1}
\iint
\frac{|\partial_\t \q  | \, \pia \, \piR}{(t^2 + \q)^{   \a + \b +  2 + m + R} }
+
t\iint
\frac{\pia \, d\Pi_{\beta + 1}(v)}{(t^2 + \q)^{   \a + \b +  2 + m} }.
\end{align*}
\item[(iv)]
If $-1 < \a,\b < -1\slash 2$, then
\begin{align*}
\big|
    \partial_\t^{2m-1}
H_{t}^{\ab}(\t,\v)
\big|
    &\lesssim
t
\sum_{K,R=0,1}
\iint
\bigg(
\frac{|\partial_\t \q  |}{(t^2 + \q)^{   \a + \b + 2 + m + K + R  }} \\
    & \quad  +
\chi_{ \{ K+R > 0 \} } \frac{1}{ (t^2 + \q)^{   \a + \b + 2 + m + K + R -1  } }
\bigg)\piK \, \piR.
\end{align*}
\end{itemize}
\end{lem}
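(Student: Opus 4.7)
The strategy is to refine the proof of Lemma~\ref{lem:tNSS} by exploiting the odd parity of the order $2m-1$. Recall that in the regime $\a,\b \ge -1/2$, $H_t^{\ab}(\t,\v)$ is comparable with $\sht$ times a double integral of a Poisson-type kernel $\widetilde{\Psi}^{\a+\b+2}(t,\q)$ against $\pia\,\pib$, where $\widetilde{\Psi}^\lambda(t,\q) \simeq (\sht^2+\q)^{-\lambda-1}$; see \cite[Proposition 2.3 and Section 6]{NSS}. Iterating the identity $\partial_\t \widetilde{\Psi}^\lambda(t,\q) = -(\lambda+1)(\partial_\t\q)\widetilde{\Psi}^{\lambda+1}(t,\q)$ together with the chain rule, one expresses $\partial_\t^{2m-1}\widetilde{\Psi}^{\a+\b+2}(t,\q)$ as a finite linear combination of monomials of the form
$$
\prod_{k\ge 1}(\partial_\t^k \q)^{a_k}\,\widetilde{\Psi}^{\a+\b+2+|a|}(t,\q),
$$
with $\sum_k k\, a_k = 2m-1$ and $|a|:=\sum_k a_k$. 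Since $\sum_k k a_k$ is odd, every admissible partition must have $a_1 \ge 1$, so each summand carries at least one $\partial_\t\q$ factor; moreover the minimal value of $|a|$ is exactly $m$, attained for instance by $(\partial_\t\q)(\partial_\t^2\q)^{m-1}$.

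To prove (i), I would isolate a single $\partial_\t\q$ factor in each monomial and use the elementary bounds $|\partial_\t^k \q| \lesssim 1$ for all $k\ge 1$ together with $|\partial_\t\q|^2 \lesssim \q \le t^2+\q$, the latter being a direct consequence of the identities \eqref{elem}. Every additional $\partial_\t\q$ factor is then absorbed into a half-power $(t^2+\q)^{1/2}$ in the denominator, and each $\partial_\t^k\q$ with $k\ge 2$ is simply replaced by $1$. Combined with $\sht \simeq t$ for $t\le 1$, this yields the bound in (i) with the denominator exponent $\a+\b+2+m$ coming from $\lambda+1+|a|$ at the worst monomial.

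Cases (ii)--(iv) are reduced to (i) through the integration by parts procedure developed in the proof of \cite[Corollary 3.5]{NSS}, which shifts $d\Pi_\a$ (respectively $d\Pi_\b$) toward $d\Pi_{-1/2}$ and $d\Pi_{\a+1}$ (respectively $d\Pi_{\b+1}$), exactly as in the proof of Lemma~\ref{lem:tNSS}. Each $\partial_u$ transferred by integration by parts onto the integrand $(\partial_\t\q)\widetilde{\Psi}^{\a+\b+2+m}(t,\q)$ has two possible fates: either it lands on the Poisson-type factor, producing a harmless $|\partial_u\q|\lesssim 1$ and raising the exponent by one — this contributes to the sum with $|\partial_\t\q|$ present (the ``$K=1$'' term); or it lands on the isolated $\partial_\t\q$, in which case $|\partial_u\partial_\t\q|\lesssim 1$ consumes the $|\partial_\t\q|$ factor without raising the exponent — this produces the auxiliary term carrying $d\Pi_{\a+1}$ and no $|\partial_\t\q|$. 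The analogous dichotomy in the $v$-variable explains the indicator $\chi_{\{K+R>0\}}$ in (iv): the auxiliary term can arise only when at least one integration by parts has been performed.

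The main obstacle is the combinatorial bookkeeping in case (iv), where integrations by parts in $u$ and $v$ are carried out in parallel and one must verify that, in each of the four combinations of ``derivative absorbed by $\widetilde{\Psi}$'' versus ``derivative consuming $\partial_\t\q$'' across the two variables, the resulting contribution is dominated by one of the explicit summands on the right-hand side; the boundary terms at $u,v=\pm 1$ are handled exactly as in \cite[Lemma 3.3]{NSS}. Once this accounting is completed, the lemma is established.
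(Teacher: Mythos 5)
Your strategy is essentially the paper's: it refines the Fa\`a di Bruno analysis behind Lemma~\ref{lem:tNSS} (i.e.\ the proof of \cite[(11)]{NSS}) by exploiting the odd order, retaining one factor $\partial_\t \q$, absorbing further odd-order factors via $|\partial_\t \q|\lesssim\sqrt{t^2+\q}$, and tracking whether each $u$- or $v$-derivative raises the exponent of the $\Psi$-type factor or consumes a $\partial_\t \q$; the dichotomy you describe for (ii)--(iv), including the origin of $\chi_{\{K+R>0\}}$, is exactly the paper's bookkeeping with $k_1+k_2\le K$, $r_1+r_2\le R$.

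There is, however, a genuine gap at your central parity step. From $\sum_k k\,a_k=2m-1$ odd one can only conclude that $\sum_{\text{odd}\,k}a_k$ is odd, not that $a_1\ge 1$: for instance, when $2m-1=3$ the partition $a_3=1$ is admissible. Your prescription ``each $\partial_\t^k\q$ with $k\ge 2$ is simply replaced by $1$'' then fails precisely on such partitions: the monomial $(\partial_\t^3\q)\,\widetilde{\Psi}^{\lambda+1}(t,\q)$ would be bounded pointwise by $(t^2+\q)^{-\lambda-1}$ with no $|\partial_\t\q|$ in the numerator, and such a term is not dominated, pointwise in $(u,v)$, by the right-hand side of (i) at points where $\partial_\t\q$ vanishes (e.g.\ $u=v$, $\t=\v$). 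The missing observation is that $\partial_\t^2\q=(1-\q)/4$, hence $\partial_\t^{2k+1}\q=(-1/4)^k\,\partial_\t\q$: \emph{every} odd-order factor is a constant multiple of $\partial_\t\q$, and it is this fact --- encoded in the paper's master estimate, which carries the power $|\partial_\t\q|^{\sum_{\text{odd}\,i}j_i-k_2-r_2}$ --- that guarantees the surviving $|\partial_\t\q|$. Two further points. First, your extremality claim (``the minimal value of $|a|$ is exactly $m$'') is garbled (the minimum of $|a|$ is $1$, attained at $a_{2m-1}=1$); what is actually needed, for \emph{every} partition, is the inequality $\sum_i j_i-\frac12\sum_{\text{odd}\,i}j_i\le m-\frac12$ (quoted in the paper from \cite[(12)]{NSS}), together with $\sum_i j_i\le m$ in the residual case where the unique odd factor (parity forces $\sum_{\text{odd}\,i}j_i=k_2+r_2=1$, whence $K+R>0$) has been consumed by a $u$- or $v$-derivative --- this short case analysis is exactly the ``bookkeeping'' you deferred, and it must be carried out, also covering the intermediate situation where a $u$-derivative consumes one of several odd factors, which then still lands in the main term. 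Second, $|\partial_\t\q|\lesssim\sqrt{\q}$ is not a consequence of \eqref{elem} (those identities concern $u=v=1$); it is \eqref{est4.5}, i.e.\ \cite[Lemma 4.5]{NoSj}.
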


\begin{proof}
Proceeding in a similar way as in the proof of Lemma~\ref{lem:tNSS} we reduce our task to showing that
\[
\big| \partial_u^K \partial_v^R \partial_\t^{2m-1} \Ptilde \big|
\lesssim
\frac{|\partial_\t \q  |}{(t^2 + \q)^{   \lam + m + K + R  }}
+
\chi_{ \{ K+R > 0 \} } \frac{1}{ (t^2 + \q)^{   \lam + m + K + R - 1 } },
\]
uniformly in $t \le 1$, $\t,\v \in (0,\pi)$ and $u,v \in [-1,1]$, where
$\lam \in \mathbb{R}$, $m \ge 1$, $K,R \in \{ 0, 1 \}$ are fixed.
This bound in turn follows
by a careful analysis of the proof of \cite[(11)]{NSS}.

Proceeding as in \cite{NSS} and using in addition the fact that
$\q, \partial_u \q, \partial_v \q, \partial_u \partial_\t \q, \partial_v \partial_\t \q$ are bounded we obtain
\[
\big| \partial_u^K \partial_v^R \partial_\t^{2m-1} \Ptilde \big|
\lesssim
\sum \frac{1}{(t^2 + \q)^{ \lam +  \sum_i j_i + k_1 + r_1  }}
| \partial_\t \q |^{ \sum_{\text{odd} \, i} j_i - k_2 - r_2 },
\]
where the main summation runs over $j_i \ge 0$, $j_1 + \ldots + (2m-1)j_{2m-1} = 2m-1$,
$k_1 + k_2 \le K$, $r_1 + r_2 \le R$ such that the exponent of
$| \partial_\t \q |$ is non-negative.
We distinguish two cases depending on whether this exponent is strictly positive or $0$.

\noindent \textbf{Case 1:} $\sum_{\text{odd} \, i} j_i \ge k_2 + r_2 + 1$.
Using the estimate $| \partial_\t \q | \lesssim \sqrt{\q} \le \sqrt{t^2 + \q}$, see \eqref{est4.5} below, 
and then the bound
$\sum_i j_i - \frac{1}2 \sum_{\text{odd} \, i} j_i \le m - 1/2$,
cf.\ \cite[(12)]{NSS}, we infer that
\begin{align*}
\frac{1}{(t^2 + \q)^{  \lam + \sum_i j_i + k_1 + r_1  }}
| \partial_\t \q |^{ \sum_{\text{odd} \, i} j_i - k_2 - r_2 }
\lesssim
\frac{1}{(t^2 + \q)^{ \lam + m  + k_1 + r_1 + (k_2 + r_2)/2 }}
| \partial_\t \q |.
\end{align*}
Since $r_1 + r_2 / 2 \le R$ and $k_1 + k_2 / 2 \le K$, we get the claimed bound.

\noindent \textbf{Case 2:} $\sum_{\text{odd} \, i} j_i = k_2 + r_2 $.
Notice that $\sum_{\text{odd} \, i} j_i$ is odd and consequently $\sum_{\text{odd} \, i} j_i = k_2 + r_2 = 1 $. Clearly, this forces
\begin{align*}
\frac{1}{(t^2 + \q)^{  \lam + \sum_i j_i + k_1 + r_1  }}
&\le
\chi_{ \{ k_2 = 1, r_2 = 0 \} }
\frac{1}{(t^2 + \q)^{  \lam + \sum_i j_i + r_1  }}
+
\chi_{ \{ k_2 = 0, r_2 = 1 \} }
\frac{1}{(t^2 + \q)^{  \lam + \sum_i j_i + k_1  }} \\
&\lesssim
\chi_{ \{ K=1 \} } \frac{1}{ (t^2 + \q)^{   \lam + m + R  } }
+
\chi_{ \{ R=1 \} } \frac{1}{ (t^2 + \q)^{   \lam + m + K  } },
\end{align*}
where the last estimate follows from the inequality $\sum_i j_i \le m$.
The relevant bound again follows.

The proof of Lemma~\ref{lem:oddthetaH} is finished.
\end{proof}

The following result is a local improvement of \cite[Lemma 4.5]{NoSj}, which says that
\begin{equation}\label{est4.5}
| \partial_\t \q | \lesssim \sqrt{\q}, \qquad \t,\v \in (0,\pi), \quad u,v \in [-1,1].
\end{equation}

\begin{lem}\label{lem:derqcomp}
Let $K$ be a fixed compact subset of $(0,\pi)$. Then
\[
\partial_\t \q
=
\frac{1}{2} \sin \frac{\t-\v}{2} + Q, \qquad \t \in K, \quad \v \in (0,\pi), \quad u,v \in [-1,1],
\]
with $Q=Q(\t,\v,u,v)$ satisfying $|Q| \lesssim \q$ uniformly in $\t,\v,u,v$ as above.
\end{lem}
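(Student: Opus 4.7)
The plan is to compute $\partial_\t \q$ explicitly, subtract off $\frac{1}{2}\sin\frac{\t-\v}{2}$, and check that the remainder splits into two terms that are each controlled by $\q$ when $\t$ ranges over a compact set in $(0,\pi)$.

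First I would differentiate $\q = 1 - u \st \svp - v \ct \cvp$ in $\t$ to get
\[
\partial_\t \q = -\tfrac{u}{2} \ct \svp + \tfrac{v}{2} \st \cvp.
\]
Then, using $\sin\frac{\t-\v}{2} = \st \cvp - \ct \svp$, I would subtract $\frac{1}{2}\sin\frac{\t-\v}{2}$ from the above expression to obtain
\[
Q = \tfrac{1}{2}(1-u)\ct\svp - \tfrac{1}{2}(1-v)\st\cvp.
\]

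Next I would exploit the sign structure of $\q$ to dominate $Q$. Writing
\[
\q = \Big(1 - \st\svp - \ct\cvp\Big) + (1-u)\st\svp + (1-v)\ct\cvp
= 2\sin^2\tfrac{\t-\v}{4} + (1-u)\st\svp + (1-v)\ct\cvp,
\]
we see that $\q$ is a sum of three non-negative terms (since $u,v \in [-1,1]$ and $\t,\v \in (0,\pi)$), so in particular
\[
(1-u)\st\svp \le \q \quad \text{and} \quad (1-v)\ct\cvp \le \q.
\]

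The final step uses compactness of $K$. For $\t \in K$, the functions $\st$ and $\ct$ are bounded both above and away from zero by positive constants depending only on $K$. Hence
\[
(1-u)\ct\svp = (1-u)\st\svp \cdot \frac{\ct}{\st} \lesssim (1-u)\st\svp \le \q,
\]
and analogously $(1-v)\st\cvp \lesssim (1-v)\ct\cvp \le \q$. Combining these two bounds with the explicit formula for $Q$ yields $|Q| \lesssim \q$, uniformly in $\t \in K$, $\v \in (0,\pi)$, and $u,v \in [-1,1]$. There is no real obstacle here; the only subtle point is that the restriction to compact $K \subset (0,\pi)$ is genuinely used to trade $\ct$ for $\st$ (and vice versa), which would fail near the endpoints where either $\st$ or $\ct$ vanishes.
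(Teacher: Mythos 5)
Your proof is correct and follows essentially the same route as the paper: both arrive at the exact identity $\partial_\t \q = \frac{1}{2}\sin\frac{\t-\v}{2} + \frac{1}{2}\big[(1-u)\ct\svp - (1-v)\st\cvp\big]$ and then dominate the remainder by $\q$, using the compactness of $K$ precisely to trade $\ct$ for $\st$ and vice versa. The only difference is cosmetic: where the paper cites the comparability $\q \simeq (\t-\v)^2 + (1-u)\t\v + (1-v)(\pi-\t)(\pi-\v)$ from earlier work, you obtain the needed bounds self-containedly from the exact decomposition $\q = 2\sin^2\frac{\t-\v}{4} + (1-u)\st\svp + (1-v)\ct\cvp$ into non-negative terms.
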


\begin{proof}
It can easily be seen that (cf.\ \cite[(22)]{NoSj})
\[
\q \simeq (\t - \v)^2 + (1-u) \t \v + (1-v) (\pi - \t) (\pi - \v),
\qquad \t,\v \in (0,\pi), \quad u,v \in [-1,1],
\]
which gives
\begin{equation}\label{11.1}
\q \simeq (\t - \v)^2 + (1-u) \v + (1-v) (\pi - \v),
\qquad \t \in K, \quad \v \in (0,\pi), \quad u,v \in [-1,1].
\end{equation}
Since (see \cite[(22)]{NoSj})
\begin{align}\label{iden1}
\partial_\t \q
= \frac{1}{2} \sin \frac{\t-\v}{2}
    + \frac{1}{2}\Big[(1-u) \ct \svp - (1-v)\st \cvp\Big],
\qquad \t,\v \in (0,\pi), \quad u,v \in [-1,1],
\end{align}
we arrive at the desired result.
\end{proof}

\begin{lem}\label{lem:intest2}
Let $\nu > -1/2$ and $\gamma \in \mathbb{R}$ be fixed. Then, uniformly in $a>0$ and $B \ge 0$,
\begin{equation*}
\int_0^1 \frac{s^{\nu-1/2} \, ds}{ (a+Bs)^{\gamma} }
\simeq
\begin{cases} (a + B)^{-\nu - 1/2} {a^{-\gamma + \nu +1/2}},
&  \gamma > \nu +1/2 \\
    { (a + B)^{-\nu - 1/2}} \big( 1 + \log^+(B/a) \big),
&  \gamma =  \nu +1/2 \\
    { (a + B)^{-\gamma}},
& \gamma < \nu + 1/2
    \end{cases}.
\end{equation*}
\end{lem}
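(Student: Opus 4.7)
My plan is to reduce the estimate to a dimensionless integral by rescaling, and then split into cases according to the sign of $\nu+1/2-\gamma$, which determines the large-argument behavior of the model integrand $r^{\nu-1/2}(1+r)^{-\gamma}$.

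First I would dispose of the easy regime $B \le a$. In that range $a+Bs \simeq a \simeq a+B$ uniformly in $s \in (0,1)$, so pulling the denominator out gives $\int_0^1 s^{\nu-1/2}(a+Bs)^{-\gamma}\,ds \simeq a^{-\gamma}$, since the remaining $s$-integral is a finite positive constant because $\nu > -1/2$. Substituting $a+B \simeq a$ and $\log^+(B/a)=0$ into the right-hand side, one sees that in all three cases the claimed bound reduces exactly to $a^{-\gamma}$, so this regime is settled uniformly.

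For the main regime $B > a$, the substitution $r = Bs/a$ transforms the integral into
\[
\int_0^1 \frac{s^{\nu-1/2}}{(a+Bs)^{\gamma}}\,ds \;=\; \frac{a^{\nu+1/2-\gamma}}{B^{\nu+1/2}} \int_0^{B/a} \frac{r^{\nu-1/2}}{(1+r)^{\gamma}}\,dr.
\]
With $M:=B/a>1$, I would split the $r$-integral at $r=1$. On $(0,1)$ the integrand is comparable to $r^{\nu-1/2}$ and contributes a bounded constant (again thanks to $\nu>-1/2$). On $(1,M)$ the integrand behaves like $r^{\nu-1/2-\gamma}$, so an elementary calculation gives three subcases: the integral is comparable to $1$ when $\gamma>\nu+1/2$, to $\log M = \log(B/a)$ when $\gamma=\nu+1/2$, and to $M^{\nu+1/2-\gamma}=(B/a)^{\nu+1/2-\gamma}$ when $\gamma<\nu+1/2$. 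In each subcase, using $a+B \simeq B$ in this regime and combining with the prefactor $a^{\nu+1/2-\gamma}B^{-\nu-1/2}$, one recovers precisely the three claimed bounds.

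No step is a real obstacle: the whole argument is standard bookkeeping once the scaling $r=Bs/a$ is performed, and the only mild care needed is to make sure the constants in the $\simeq$ relations are truly uniform in $(a,B)$, which is automatic because after rescaling the only remaining parameter is $M=B/a$, and the three asymptotic regimes in $M$ are handled by elementary estimates.
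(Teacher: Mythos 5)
Your proof is correct and is essentially the same as the paper's: your substitution $r=Bs/a$ with a split at $r=1$ is exactly the paper's split of the $s$-integral at $s=a/B$ (after disposing of the trivial regime $a\gtrsim B$), with the paper invoking a pre-packaged elementary relation for $\int_x^y t^{\xi}\,dt$ where you compute the model integral $\int_1^M r^{\nu-1/2-\gamma}\,dr$ directly. Your bookkeeping in all three cases, including uniformity via the single parameter $M=B/a$, checks out.
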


In the proof of this lemma we will use the following relation, which is an immediate consequence of \cite[Lemma 3.2]{NR}.
Given $\xi \in \mathbb{R}$, we have
\begin{align}\label{rel}
\int_x^y t^{\xi} \, dt \simeq
\begin{cases}
|y-x| x^{\xi + 1}/y, & \xi < -1  \\
\log(y/x), & \xi = -1 \\
|y-x| y^\xi, & \xi > -1
\end{cases}, \qquad y \ge x > 0.
\end{align}
For further reference we state also the following estimates, which correspond to $\xi<-1$ above. For a fixed $\sigma > 0$ we have
\begin{align}\label{3.2}
\Big| \frac{1}{x^\sigma} - \frac{1}{y^\sigma} \Big|
\simeq
\frac{|x - y|}{(x \vee y ) (x \wedge y)^{\sigma}}
\le
|x-y| \Big( \frac{1}{x^{\sigma+1}} + \frac{1}{y^{\sigma+1}} \Big), \qquad x,y >0.
\end{align}
\begin{proof}[Proof of Lemma \ref{lem:intest2}.]
We may assume that $a < B/2$, since otherwise $a + Bs \simeq a$, $s \in (0,1]$, and the conclusion is straightforward.
Further, we split the region of integration onto $(0, a/B)$ and $(a/B, 1)$ denoting the corresponding integrals by $I_0$ and $I_1$, respectively.
We consider $I_0$ and $I_1$ separately.
The treatment of $I_0$ is trivial because $a + Bs \simeq a$, $s \in (0, a/B)$,
which forces $I_0 \simeq {a^{-\gamma + \nu +1/2} B^{-\nu - 1/2}}$.
Since $a + Bs \simeq Bs$ for $s \in (a/B, 1)$, an application of \eqref{rel} with $\xi = \nu - 1/2 - \gamma$ gives
\begin{equation*}
I_1
\simeq
\frac{1}{B^\gamma}
\int_{a/B}^1 s^{\nu - 1/2 - \gamma} \, ds
\simeq
\frac{1}{B^\gamma}
\begin{cases} ({B}/a)^{\gamma -\nu - 1/2},
&  \gamma > \nu +1/2  \\
\log(B/a),
&  \gamma = \nu +1/2  \\
1,
& \gamma < \nu +1/2
    \end{cases}.
\end{equation*}
Comparing $I_0$ with $I_1$ we get the required relation.
\end{proof}

We state an easy consequence of Lemma~\ref{lem:intest2}, which is a generalization of the case $\kappa = 0$ in Lemma~\ref{lem:intest}.

\begin{cor}\label{cor:intest3}
Let $\nu \ge -1/2$ and $\gamma > 0$ be fixed. Then, uniformly in $A > B \ge 0$,
\begin{equation*}
\int \frac{d\Pi_{\nu}(s)}{ (A - Bs)^{\gamma} }
\simeq
\begin{cases} A^{-\nu - 1/2} {(A - B)^{-\gamma + \nu +1/2}},
&  \gamma > \nu + 1/2  \\
{ A^{-\nu - 1/2}} \big[ 1 + \log^+\big(B/(A-B)\big) \big],
&  \gamma = \nu + 1/2  \\
{ A^{-\gamma}},
& \gamma < \nu + 1/2
    \end{cases}.
\end{equation*}
\end{cor}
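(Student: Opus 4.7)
My plan is to reduce the claim to Lemma~\ref{lem:intest2} via an explicit change of variable, handling the cases $\nu > -1/2$ and $\nu = -1/2$ separately. For $\nu > -1/2$ the measure has density proportional to $(1-s^2)^{\nu-1/2}$ on $[-1,1]$, and I will split the integration $\int_{-1}^1$ into the pieces over $[-1,0]$ and $[0,1]$, since only the right endpoint $s=1$ is responsible for the singular behavior of $A-Bs$.

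On $[-1,0]$ one has $A \le A-Bs \le A+B \le 2A$ (recall $0\le B<A$), so $(A-Bs)^{-\gamma}\simeq A^{-\gamma}$ uniformly, and this piece contributes $\simeq A^{-\gamma}$. On $[0,1]$ I will substitute $r=1-s$, obtaining
\[
\int_0^1 \frac{r^{\nu-1/2}(2-r)^{\nu-1/2}}{(A-B+Br)^{\gamma}}\,dr,
\]
and since $(2-r)^{\nu-1/2}\simeq 1$ on $[0,1]$ this is, up to constants, exactly the integral treated in Lemma~\ref{lem:intest2} with $a := A-B$ and the same role of $B$. Using $a+B = A$, the three regimes of that lemma reproduce the three cases on the right-hand side of the corollary.

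It then remains to observe that the $[0,1]$-contribution always dominates the $A^{-\gamma}$ coming from $[-1,0]$: for $\gamma>\nu+1/2$ this follows from $(A-B)^{-\gamma+\nu+1/2}\ge A^{-\gamma+\nu+1/2}$, since the exponent is negative and $A-B\le A$; for $\gamma=\nu+1/2$ the bracket $1+\log^+(B/(A-B))$ is $\ge 1$; and for $\gamma<\nu+1/2$ the lemma returns exactly $A^{-\gamma}$, as does the other piece. Finally, for $\nu=-1/2$ one has $d\Pi_{-1/2}=\tfrac12(\delta_{-1}+\delta_1)$, so the integral equals $\tfrac12\bigl[(A-B)^{-\gamma}+(A+B)^{-\gamma}\bigr]\simeq (A-B)^{-\gamma}$ because $A+B\simeq A\le A-B\cdot(\text{const})^{-1}$; since $\nu+1/2=0$, only the first case $\gamma>\nu+1/2$ occurs and this matches $A^{0}(A-B)^{-\gamma}$. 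No genuine obstacle arises; the only point requiring care is uniformity as $B\to 0$ or $B\to A^-$, which is transparent from this explicit case analysis.
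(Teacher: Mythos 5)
Your proof is correct and takes essentially the same route as the paper: the paper's argument also reduces to Lemma~\ref{lem:intest2} with $a=A-B$ (so $a+B=A$) after replacing the density $(1-s^2)^{\nu-1/2}$ by $(1-s)^{\nu-1/2}$ on $[0,1]$ and substituting $s\mapsto 1-s$, merely asserting without detail the comparability that you verify explicitly via the $[-1,0]$/$[0,1]$ split and the domination check, and dismissing $\nu=-1/2$ as trivial where you compute it out. The only cosmetic flaw is the garbled inequality ``$A+B\simeq A\le A-B\cdot(\text{const})^{-1}$'' in your $\nu=-1/2$ case; all that is needed there is $(A+B)^{-\gamma}\le (A-B)^{-\gamma}$, so the two-point sum is $\simeq (A-B)^{-\gamma}$.
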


\begin{proof}
The case $\nu = -1/2$ is trivial, so we may assume that $\nu > -1/2$.
Then 
\[
\int \frac{d\Pi_{\nu}(s)}{ (A - Bs)^{\gamma} }
\simeq
\int_0^1 \frac{(1 - s)^{\nu-1/2} \, ds}{ (A - Bs)^{\gamma} }
=
\int_0^1 \frac{s^{\nu-1/2} \, ds}{ (A - B + Bs)^{\gamma} }.
\]
Now the conclusion is a direct consequence of Lemma~\ref{lem:intest2} specified to $a = A - B$.
\end{proof}

\begin{lem}\label{lem:intqest} Let $\ab > -1$ and $\eps>0$ be fixed. Assume that
$\xi, \xi_1,\xi_2,\kappa_1,\kappa_2 \ge 0$ are fixed and such that
$\a+\xi_1+\kappa_1, \, \b+\xi_2+\kappa_2 \ge -1/2$. Further, let
$K$ be a fixed compact subset of $(0,\pi)$. Then
\begin{equation}\label{bound6.5}
\iint \frac{d\Pi_{\a+\xi_1+\kappa_1}(u) \, d\Pi_{\b+\xi_2+\kappa_2}(v) }{ \q ^{\a+\b+\xi_1+\xi_2+\xi+1\slash 2}}
     \lesssim  1
    + \chi_{\{\xi=1/2\}} \bigg( \frac{1}{|\t - \v|^2} \bigg)^\eps
     + \chi_{\{\xi>1/2\}} \bigg( \frac{1}{|\t - \v|^2} \bigg)^{\xi-1/2},
\end{equation}
uniformly in $\t \in K$, $\v \in (0,\pi)$, $\t \ne \vp$.
\end{lem}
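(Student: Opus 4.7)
The plan is to split according to the size of $|\t - \v|$, reduce the double integral to a one-dimensional one via Dirichlet-type coordinates, and then apply Lemma~\ref{lem:intest2}.

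Since $\t$ lies in a compact subset $K$ of $(0,\pi)$, formula \eqref{11.1} from the proof of Lemma~\ref{lem:derqcomp} gives $\q \simeq (\t-\v)^2 + (1-u)\v + (1-v)(\pi-\v)$ uniformly in $(u,v) \in [-1,1]^2$. When $|\t - \v|$ is bounded below by a fixed constant depending on $K$, $\q$ is itself bounded below and the integral is trivially $\lesssim 1$. Hence I restrict attention to the regime where $|\t - \v|$ is small; there $\v$ lies in an enlarged compact subset of $(0,\pi)$, so $\svp \simeq 1$ and $\cvp \simeq 1$, and the comparability further simplifies to $\q \simeq (\t-\v)^2 + (1-u) + (1-v)$. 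The contribution from the region where $1-u$ or $1-v$ is bounded below by a constant is trivially $\lesssim 1$, so the main task is to estimate the contribution from $(u,v)$ close to $(1,1)$.

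After the substitution $s = 1-u$, $r = 1-v$ this contribution reduces (up to bounded remainders coming from the parts of the $\Pi_\nu$ measures away from $u=1$, $v=1$) to estimating
\[
\int_0^{c_0}\!\!\int_0^{c_0} \frac{s^{\nu_1 - 1/2}\, r^{\nu_2 - 1/2}}{\big[(\t-\v)^2 + s + r\big]^{\gamma}}\, ds\, dr,
\]
with $\nu_1 = \a + \xi_1 + \kappa_1 \ge -1/2$, $\nu_2 = \b + \xi_2 + \kappa_2 \ge -1/2$, and $\gamma = \a + \b + \xi_1 + \xi_2 + \xi + 1/2$. Passing to Dirichlet coordinates $\rho = s+r$, $\sigma = s/\rho$ (Jacobian $\rho$) and integrating in $\sigma \in (0,1)$, which produces a finite Beta-function constant thanks to $\nu_1, \nu_2 \ge -1/2$, one arrives at the one-dimensional integral
\[
\int_0^{c_1} \frac{\rho^{\nu_1 + \nu_2}}{\big[(\t-\v)^2 + \rho\big]^{\gamma}}\, d\rho.
\]

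The key algebraic identity is $\nu_1 + \nu_2 + 1 - \gamma = \kappa_1 + \kappa_2 + 1/2 - \xi$. A direct split of this one-dimensional integral into $\rho \le (\t-\v)^2$ and $\rho > (\t-\v)^2$, or Lemma~\ref{lem:intest2} with $\nu = \nu_1 + \nu_2 + 1/2$, yields a bound of the form $1$, $\log\big(1/|\t-\v|^2\big)$, or $|\t-\v|^{-2(\xi - \kappa_1 - \kappa_2 - 1/2)}$ according as $\xi < \kappa_1+\kappa_2+1/2$, $\xi = \kappa_1 + \kappa_2 + 1/2$, or $\xi > \kappa_1 + \kappa_2 + 1/2$. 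Since $\kappa_1, \kappa_2 \ge 0$, in the latter two subcases $\xi \ge 1/2$ automatically, and the elementary majorizations $\log(1/b) \lesssim b^{-\eps}$ and $b^{-(\xi - \kappa_1 - \kappa_2 - 1/2)} \lesssim b^{-(\xi - 1/2)}$ (valid for $b = |\t-\v|^2 \in (0,\pi^2]$) collapse all three bounds into the right-hand side of \eqref{bound6.5}. The main technical obstacle is the clean treatment of the edge cases $\nu_i = -1/2$, where $d\Pi_{-1/2}$ is a sum of point masses: the $u$- or $v$-integration then degenerates to evaluation at $\pm 1$, but the remaining one-variable analysis proceeds along exactly the same lines and yields the same bound.
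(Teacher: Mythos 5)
Your proposal is correct, but it takes a genuinely different route from the paper's. The paper never localizes $\q$: it integrates iteratively, first in $u$ and then in $v$, via two (sometimes three) applications of Corollary~\ref{cor:intest3} with $A\simeq 1$ (this is where $\t\in K$ enters), and then has to treat separately a delicate borderline term $I_2$ carrying a logarithm, including an atomic subcase $d\Pi_{-1/2}$, each time invoking \eqref{logest}. You instead exploit the compactness of $K$ more aggressively through \eqref{11.1}, reducing to $\q \simeq (\t-\v)^2+(1-u)+(1-v)$ on the only nontrivial region (a corner neighborhood of $(u,v)=(1,1)$, with $\v$ confined to an enlarged compact set once $|\t-\v|$ is small), and then collapse the two integrations into a single radial integral by Dirichlet coordinates, so that one application of Lemma~\ref{lem:intest2} with $\nu=\nu_1+\nu_2+1/2$ produces the full trichotomy at once; your exponent identity $\nu_1+\nu_2+1-\gamma=\kappa_1+\kappa_2+1/2-\xi$ is exactly the bookkeeping that in the paper is distributed over the two successive integrations, and your handling of the atomic edge cases $\nu_i=-1/2$ (where the Beta factor would diverge but the measure degenerates to point masses) is consistent: the same trichotomy in $\xi$ versus $\kappa_1+\kappa_2+1/2$ reappears with the same bounds. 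Two remarks on what each approach buys. Your argument is shorter and more symmetric, with a single borderline case instead of the paper's scattered ones; just note explicitly that the Dirichlet change of variables is applied after enlarging the domain $(0,c_0)^2$ to $\{\rho<2c_0,\,\sigma\in(0,1)\}$, which is harmless since the integrand is nonnegative. The paper's iterated scheme, on the other hand, does not rely on the symmetric collapse of $\q$, and this is what makes it reusable in Lemma~\ref{lem:K=0pi4}, where $K=(0,\pi/4)$ touches the endpoint $\t=0$, the factor $(1-u)\t\v$ cannot be replaced by $(1-u)$, and the two integrations genuinely produce different powers of $\t+\v$ and $|\t-\v|$; your reduction would not carry over to that companion result without substantial modification.
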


To show this we need the following elementary estimate.
For each $\rho > 0$ fixed, 
\begin{equation}\label{logest}
\log^+x \lesssim x^{\rho}, \qquad x>0.
\end{equation}

\begin{proof}[Proof of Lemma \ref{lem:intqest}.]
    By the boundedness of $\q$ and the finiteness of $d\Pi_{\nu}$, $\nu \ge -1/2$, we obtain
    \begin{align*}
        \iint \frac{d\Pi_{\a+\xi_1+\kappa_1}(u) \, d\Pi_{\b+\xi_2+\kappa_2}(v) }{ \q ^{\a+\b+\xi_1+\xi_2+\xi+1\slash 2}}
            \lesssim 1 + \chi_{\{\tilde{\a} + \tilde{\b} + \xi + 1/2>0\}}
                \iint \frac{d\Pi_{\tilde{\a}+\kappa_1}(u) \, d\Pi_{\tilde{\b}+\kappa_2}(v) }{ \q^{\tilde{\a} + \tilde{\b} + \xi + 1/2}},
    \end{align*}
    where $\tilde{\a}=\a+\xi_1$ and $\tilde{\b}=\b+\xi_2$.
    Next, we assume that $\tilde{\a}+\tilde{\b} + \xi +1/2>0$ and estimate the second term above.
    Applying Corollary~\ref{cor:intest3} with the parameters
    $\nu=\tilde{\a}+\kappa_1$,
    $\gamma=\tilde{\a}+\tilde{\b}+\xi+1/2$,
    $A=1-v\cos\frac{\t}{2} \cos\frac{\v}{2} \simeq 1 $
    and $B= \sin\frac{\t}{2} \sin\frac{\v}{2}$
    we get
    \begin{align*}
        \iint \frac{d\Pi_{\tilde{\a}+\kappa_1}(u) \, d\Pi_{\tilde{\b}+\kappa_2}(v) }{ \q^{\tilde{\a} + \tilde{\b} + \xi + 1/2}}
            &\lesssim 1+\int \bigg( \chi_{ \{\tilde{\b} + \xi > \kappa_1 \} } \frac{1}{(1 - \st \svp - v\ct \cvp)^{\tilde{\b} + \xi - \kappa_1}} \\
            & \qquad \qquad \qquad + \chi_{ \{\tilde{\b} + \xi = \kappa_1 \} }
                \log^+ \frac{\st \svp}{1 - \st \svp - v \ct \cvp} \bigg) d\Pi_{\tilde{\b}+\kappa_2}(v) \\
            &\equiv 1 + I_1 + I_2.
    \end{align*}
    
    We analyze $I_1$ and $I_2$ separately. Another application of Corollary~\ref{cor:intest3} taken with
    $\nu=\tilde{\b}+\kappa_2$,
    $\gamma=\tilde{\b}+\xi-\kappa_1$,
    $A=1-\st \svp \simeq 1 $
    and $B= \ct \cvp$ leads to
    \begin{align*}
        I_1
            & \lesssim 1
                + \chi_{\{\xi - 1/2 = \kappa_1 + \kappa_2\}} \log^+ \frac{1}{|\t - \v|^2}
                + \chi_{\{\xi - 1/2 > \kappa_1 + \kappa_2\}} \bigg( \frac{1}{|\t - \v|^2} \bigg)^{\xi-1/2-(\kappa_1+\kappa_2)},
    \end{align*}
    because $1- \st \svp - \ct \cvp \simeq |\t - \v|^2$. To see that $I_1$ is bounded by the
    right-hand side of \eqref{bound6.5} we take into account that $\kappa_1+\kappa_2 \geq 0$
    and make use of \eqref{logest} with $\rho=\eps$, when $\xi=1/2$, and $\rho=\xi-1/2$,
    when $\xi>1/2$.

    As for $I_2$, we write
    \begin{align*}
        I_2 \lesssim &
            \Big( \chi_{ \{\tilde{\b} + \xi = \kappa_1, \, \tilde{\b}+\kappa_2>-1/2 \} }
        + \chi_{ \{\tilde{\b} + \xi = \kappa_1, \, \tilde{\b}+\kappa_2 =-1/2 \} }
            \Big) 
\int \log^+ \frac{1}{1 - \st \svp - v \ct \cvp} d\Pi_{\tilde{\b}+\kappa_2}(v).
    \end{align*}
Since $d\Pi_{-1/2}$ is a simple atomic measure, the estimate related to the second term is straightforward.
Using \eqref{logest} with a certain $0<\rho<\tilde{\b}+\kappa_2+1/2$ to the integrand connected with the
first term above we see that the desired bound follows from Corollary~\ref{cor:intest3} specified to
$\nu=\tilde{\b}+\kappa_2$,
    $\gamma=\rho$, $A=1-\st \svp \simeq 1$ and $B=\ct \cvp$. 
\end{proof}

The next lemma will play a crucial role in the proof of Lemma~\ref{lem:weightRiesz}.

\begin{lem}\label{lem:thetaK}
Let $\ab > -1$, $m \ge 1$ and $0 \le j < 2m$. Assume that $K$ is a fixed compact subset of $(0,\pi)$. Then
\begin{align*}
| \partial_\t^j K_m^{\ab}(\t,\v) | \lesssim 1,
\qquad \t \in K, \quad \v \in (0,\pi), \quad \t \ne \v,
\end{align*}
where one should replace $K_m^{\ab}(\t,\v)$ by $\widetilde{K}^{\ab}_m(\t,\v)$ in case
$\tau_{\ab} = 0$
(actually, after this replacement the estimate is valid for all $\ab > -1$).
\end{lem}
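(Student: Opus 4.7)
The strategy is to combine the integral representation from Lemma \ref{lem:diffK},
$$
\partial_\t^j K_m^{\ab}(\t,\v) = \frac{1}{\Gamma(2m)} \int_0^\infty \partial_\t^j H_t^{\ab}(\t,\v)\, t^{2m-1}\, dt
$$
(with $H_t^{\ab}$ replaced by $\widetilde{H}_t^{\ab}$ if $\tau_{\ab}=0$), with the pointwise estimates of $\partial_\t^j H_t^{\ab}$ furnished by Lemmas \ref{lem:thetaJP} and \ref{lem:oddthetaH}, exploiting crucially that $\t \in K$ stays bounded away from $0$ and $\pi$. I would split the $t$-integral at $t=1$; the tail $\int_1^\infty$ is controlled by the exponential-in-$t$ bound from the second part of Lemma \ref{lem:thetaJP} (valid for all $\ab$ once one passes to $\widetilde{H}_t^{\ab}$ when $\tau_{\ab}=0$), so $\int_1^\infty e^{-ct} t^{2m-1}\, dt \lesssim 1$.

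For the piece $\int_0^1$ and any $j \le 2m-2$, the small-$t$ part of Lemma \ref{lem:thetaJP} together with the compactness of $K$ renders the factors $(t^2+\t^2+\v^2)^{-\a-1/2}$ and $(t^2+(\pi-\t)^2+(\pi-\v)^2)^{-\b-1/2}$ uniformly bounded, yielding
$$
|\partial_\t^j H_t^{\ab}(\t,\v)| \lesssim \frac{t}{[t^2+(\t-\v)^2]^{1+j/2}}, \qquad t\le 1,\ \t\in K,\ \v\in(0,\pi).
$$
Setting $\delta=|\t-\v|$ and splitting the $t$-integral at $t=\delta$, a direct computation gives $\int_0^1 t^{2m}[t^2+\delta^2]^{-1-j/2}\, dt \lesssim 1$ uniformly in $\delta>0$ exactly when $j \le 2m-2$.

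The main obstacle is the remaining case $j=2m-1$, where the naive bound above produces a logarithmic blow-up as $\delta \to 0$. To overcome this I would invoke Lemma \ref{lem:oddthetaH}, which yields an extra factor of $|\partial_\t\q|$ in the integrand; then Lemma \ref{lem:derqcomp}, applicable since $\t\in K$, gives $|\partial_\t\q|\lesssim |\t-\v|+\q$. The $\q$-term effectively lowers the exponent of $(t^2+\q)$ by one, while the $|\t-\v|$-term supplies a saving factor. Performing the $(u,v)$-integrations by two applications of Lemma \ref{lem:intest} and once more using $\t\in K$, one arrives at
$$
|\partial_\t^{2m-1} H_t^{\ab}(\t,\v)| \lesssim \frac{t\,|\t-\v|}{[t^2+(\t-\v)^2]^{m+1}} + \frac{t}{[t^2+(\t-\v)^2]^{m}}, \qquad t\le 1.
$$
A rescaling $t=\delta s$ then shows that both terms, multiplied by $t^{2m-1}$ and integrated over $(0,1)$, give bounds uniform in $\delta>0$. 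When $\a<-1/2$ or $\b<-1/2$ the same reasoning applies verbatim using parts (ii)--(iv) of Lemma \ref{lem:oddthetaH}, since for $\t\in K$ the additional weights $(\st+\svp)^{Kk}$ and $(\ct+\cvp)^{Rr}$ are trivially bounded. Finally, the case $\tau_{\ab}=0$ needs no separate argument: for $j\ge 1$ one has $\partial_\t^j H_t^{\ab}=\partial_\t^j\widetilde{H}_t^{\ab}$ because $\P_0^{\ab}$ is constant in $\t$, while for $j=0$ the additional $\v$-dependent constant $\P_0^{\ab}(\t)\P_0^{\ab}(\v)$ contributes a term of the form $c\int_0^\infty e^{-t|\tau_{\ab}|}t^{2m-1}\,dt$ that is finite for all $\ab>-1$ precisely because of the compensation defining $\widetilde{K}_m^{\ab}$.
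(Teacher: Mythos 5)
Your proposal is correct in substance and follows essentially the same route as the paper's proof: the representation of Lemma \ref{lem:diffK}, the exponential bound for $t\ge 1$, the small-$t$ estimate of Lemma \ref{lem:thetaJP} disposing of all $j\le 2m-2$, and, for the critical case $j=2m-1$, the refined odd-order estimate of Lemma \ref{lem:oddthetaH} combined with the cancellation $|\partial_\t \q|\lesssim |\t-\v|+\q$ supplied by Lemma \ref{lem:derqcomp} (valid precisely because $\t\in K$). The one methodological difference is the order of integration in the critical case: the paper integrates in $t$ first (Lemma \ref{lem:intest2}) and then evaluates the $(u,v)$-integrals via the compact-set estimate of Lemma \ref{lem:intqest}, whereas you integrate in $(u,v)$ first (Lemma \ref{lem:intest}, as in the proof of Lemma \ref{lem:thetaJP}) and finish the $t$-integral by rescaling. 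Both orders work, and yours is arguably a bit shorter, since for $\t\in K$ the off-diagonal factors $(t^2+\t^2+\v^2)^{-\a-1/2}$ and $(t^2+(\pi-\t)^2+(\pi-\v)^2)^{-\b-1/2}$ collapse to constants and Lemma \ref{lem:intqest} is never needed; I verified that the hypotheses $\gamma>\nu+1/2$ of Lemma \ref{lem:intest} hold in all the configurations your computation requires, and that your claimed uniform bound $t|\t-\v|\,[t^2+(\t-\v)^2]^{-m-1}+t\,[t^2+(\t-\v)^2]^{-m}$ is indeed what comes out in every parameter range.

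Two slips should be repaired, though neither derails the argument. First, the weights $(\st+\svp)^{Kk}$ and $(\ct+\cvp)^{Rr}$ you mention belong to Lemma \ref{lem:tNSS}, not to Lemma \ref{lem:oddthetaH}; parts (ii)--(iv) of the latter instead contain \emph{additional} terms carrying no factor $|\partial_\t \q|$ but with the exponent of $(t^2+\q)$ lowered by one (e.g.\ the $\chi_{\{K+R>0\}}$ terms in (iv)), so the case distinction is not ``verbatim''; fortunately these extra terms are exactly of the type of your second displayed summand and succumb to the same lowered-exponent computation. Second, your final sentence about $\tau_{\ab}=0$, $j=0$ is wrong as written: when $\tau_{\ab}=0$ the quantity $\int_0^\infty e^{-t|\tau_{\ab}|}t^{2m-1}\,dt=\int_0^\infty t^{2m-1}\,dt$ diverges, and the compensation does not make this term finite --- it removes it. The correct statement is that one works with $\widetilde{H}_t^{\ab}$ throughout, using its exponential decay (which you did invoke earlier) for $t\ge 1$ and, for $t\le 1$, the small-$t$ bound for $H_t^{\ab}$ plus a bounded constant; the formula you wrote belongs rather to the case $\tau_{\ab}\ne 0$ of the parenthetical claim, where $\widetilde{K}_m^{\ab}(\t,\v)-K_m^{\ab}(\t,\v)$ equals the bounded quantity $-c\,\P_0^{\ab}(\t)\P_0^{\ab}(\v)\int_0^\infty e^{-t|\tau_{\ab}|}t^{2m-1}\,dt$.
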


\begin{proof}
We consider the case $\tau_{\ab} \ne 0$, leaving the opposite one to the reader.
An application of Lemmas~\ref{lem:diffK} and 
\ref{lem:thetaJP}
yields
\[
| \partial_\t^j K_m^{\ab}(\t,\v) |
\lesssim 1
+
\int_0^1 | \partial_\t^j H_t^{\ab}(\t,\v) | t^{2m-1} \, dt,
\qquad \t,\v \in (0,\pi), \quad \t \ne \v.
\]
Then for $j \le 2m -2$, with the aid of Lemma~\ref{lem:thetaJP}, we obtain
\[
| \partial_\t^j K_m^{\ab}(\t,\v) |
\lesssim 1
+
\int_0^1 \frac{ t^{2m} }{(t + |\t - \v|)^{2+j}} \, dt
\le 2,
\qquad \t \in K, \quad \v \in (0,\pi), \quad \t \ne \v. 
\]
Thus it remains to consider $j = 2m - 1$.

We focus on the case $-1 < \ab < -1/2$, which is the most involved one (see Lemma \ref{lem:oddthetaH});
the remaining cases are similar and hence left to the reader.
Using (iv) of Lemma~\ref{lem:oddthetaH} and then Lemma~\ref{lem:intest2} to the integral against $dt$ with the parameters
$\nu = 2m+1/2$, $\gamma = 2(\a + \b + 2 + m + W)$, $a = \sqrt{\q}$, $B=1$, where
$W \in \{K+R, K+R-1\}$, we infer that
\begin{align*}
| \partial_\t^{2m-1} K_m^{\ab}(\t,\v) |
    &\lesssim 1
+ \sum_{K,R = 0,1} \int_0^1 t^{2m} \iint \bigg(
\frac{|\partial_\t \q|}{(t^2 + \q)^{\a + \b + 2 + m + K + R}} \\
    & \qquad \qquad \qquad \qquad
+
\chi_{\{K+R>0\}}
\frac{1}{(t^2 + \q)^{\a + \b + 2 + m + K + R - 1}}
\bigg) \, \piK \, \piR \, dt \\
    & \lesssim
1 + \sum_{K,R = 0,1} \iint \bigg(
\frac{|\partial_\t \q|}{\q^{\a + \b + 3/2 + K + R}}
+
|\partial_\t \q| \log^+\frac{1}{\q}
+
\frac{1}{\q^{\a + \b + 1/2 + K + R}}
\\
    & \qquad \qquad \qquad \qquad
+
\chi_{\{K+R>0\}} \log^+\frac{1}{\q}
\bigg) \, \piK \, \piR,
\end{align*}
provided that $\t \ne \v$.
In view of \eqref{est4.5} and \eqref{logest}
the second term
under the last double integral is bounded.
Therefore an application of Lemma~\ref{lem:derqcomp} to the first, and \eqref{logest}
to the last term under the double integral above (with
$\rho = (\a + 1)K - (1-K)/2 + (\b + 1)R - (1-R)/2 + 1/2$ and $x=1/\q$)
implies
\begin{align*}
| \partial_\t^{2m-1} K_m^{\ab}(\t,\v) |
    &\lesssim
1 + \sum_{K,R = 0,1} \iint \bigg(
\frac{1}{\q^{\a + \b + 1/2 + K + R}}
+
\frac{|\t - \v|}{\q^{\a + \b + 3/2 + K + R}}
\\
    & \qquad \qquad \qquad \qquad \quad
+
\frac{1}{\q^{(\a + 1)K - (1-K)/2 + (\b + 1)R - (1-R)/2 + 1/2}}
\bigg) \, \piK \, \piR,
\end{align*}
for $\t \in K$, $\v \in (0,\pi)$, $\t \ne \v$.

The expression emerging from integration of the first two terms under the last double integral can be suitably
bounded by means of Lemma~\ref{lem:intqest} specified to $\xi_1 = K$, $\kappa_1 = (-\a -1/2)(1-K)$,
$\xi_2 = R$, $\kappa_2 = (-\b -1/2)(1-R)$; and $\xi=0$ or $\xi=1$, in the first or the second case, respectively.
The remaining term can also be treated by Lemma~\ref{lem:intqest}, this time applied with
$\xi=  \xi_1 = \xi_2 = \kappa_1 = \kappa_2 = 0$
and $\ab$ replaced by
$(\a + 1)K - (1-K)/2$ and $(\b + 1)R - (1-R)/2$.

This finishes the proof of Lemma~\ref{lem:thetaK}.
\end{proof}

We are now in a position to prove Lemma \ref{lem:weightRiesz}.

\begin{proof}[Proof of Lemma \ref{lem:weightRiesz}.]
We write $N=2m$, $m \ge 1$. From the definition of the Muckenhoupt class of $A^{\ab}_p$ weights we have that $1/w \in L^\infty(0,\pi)$ for $w \in A_1^{\ab}$ and
$w \in L^1(d\mu_{\ab})$ for $w \in A_p^{\ab}$, $1 \le p < \infty$. Therefore it is enough to show that for every fixed $\t \in (0,\pi)$ we have
\[
| R_{2m}^{\ab} (\t,\v) | \lesssim 1, \qquad \v \in (0,\pi), \quad \v \ne \t.
\]
We assume that $\tau_{\ab} \ne 0$; the opposite case can be treated in an analogous way. Let $\t \in (0,\pi)$ be fixed. By Corollary~\ref{cor:Rker} and the decomposition \eqref{dec1} we get
$$
R_{2m}^{\ab} (\t,\v) 
    = (-1)^m  \big(\J^{\ab}\big)_{\t}^m K_m^{\ab} (\t,\v)
 + \sum_{0 \le j < 2m} f_{m,j}(\t) \partial_{\t}^j K_m^{\ab} (\t,\v),
\qquad  \v \in (0,\pi), \quad \v \ne \t.
$$
Thus, in view of \eqref{HarmK} and Lemma~\ref{lem:thetaK}, we get the desired conclusion.
\end{proof}

\begin{rem}
Item \emph{(i)} in Lemma~\ref{lem:weightRiesz} can also be proved by using Lemma~\ref{lem:tNSS} and the fact that
$A_p^{\ab} \subset A_{\infty}^{\ab}$, $1\le p <\infty$.
On the other hand, Lemma~\ref{lem:weightRiesz} \emph{(ii)} is more subtle and cannot be deduced directly by an application of Lemma~\ref{lem:tNSS}.
\end{rem}

\section{Proof of Lemma~\ref{lem:PV=0}} \label{sec:PV0}

We start with some preparatory results. To state them, and also for further use, we denote
\begin{align*}
\mu_{\ab}(\v) & = \Big( \sin \frac{\v}{2} \Big)^{2\a+1} \Big( \cos \frac{\v}{2}\Big)^{2\b+1}, 
\qquad \v \in (0,\pi), \\
d & = d(\t)  =  \t \wedge (\pi-\t), \qquad \t \in (0,\pi) \quad
\big(\textrm{distance from $\t$ to the boundary of $(0,\pi)$}\big).  
\end{align*}
Further, for $\t \in (0,\pi)$ fixed and $\v \in (\t-d/2,\t + d/2)$ and $u,v \in [-1,1]$, 
we introduce the abbreviations
\begin{displaymath}
\begin {array}{rclrrl}
 \q(2\t-\v) & = & q(\t,2\t-\v,u,v), \qquad &
\partial_\t \q(2\t-\v) &  = & \partial_\t q(\t,2\t-\v,u,v), \\
 \partial_u \q(2\t-\v) & = & \partial_u q(\t,2\t-\v,u,v), \qquad &
\partial_v \q(2\t-\v) & = & \partial_v q(\t,2\t-\v,u,v).
\end {array}
\end{displaymath}
Finally, we will use frequently, sometimes without mentioning, the estimates
\begin{equation}\label{comp1}
\begin{split}
\cht - 1 \simeq t^2, \qquad 
\sht \simeq t,
\qquad \qquad  \qquad  0< t \le 1, \\
\mu_{\ab} (\v) \simeq 
\mu_{\ab} (2\t - \v) \simeq 1, 
\qquad \textrm{ $\t \in (0,\pi)$ fixed, \quad $\v \in (\t-d/2,\t + d/2)$}.
\end{split}
\end{equation}

\begin{lem}\label{lem:---}
    Let $\ab >-1$, $\gamma > 0$ and  $\t \in (0,\pi)$ be fixed. Then the following estimates hold.
\begin{align*}
& \emph{(a)} \qquad
	\big| \mu_{\ab}(\v) - \mu_{\ab}(2\t-\v) \big|
        \lesssim 
|\t-\v|, \qquad \v \in (\t-d/2,\t + d/2).\\
& \emph{(b)} \qquad
	\big| \q - \q(2\t-\v) \big|
        \lesssim |\t-\v| \, \q 
				\simeq |\t-\v| \, \q (2\t-\v), 
\qquad \v \in (\t-d/2,\t + d/2), \quad u,v \in [-1,1].\\
& \emph{(c)} \qquad
	\bigg| \frac{\mu_{\ab}(\v)}{(\cht - 1 + \q)^{\gamma}} - \frac{\mu_{\ab}(2\t-\v)}{(\cht - 1 + \q(2\t-\v))^{\gamma}} \bigg| \\
& \qquad \qquad \qquad  \lesssim
\frac{|\t - \v|}{(t^2 + \q)^{\gamma}}
            + 
\frac{|\t - \v|}{(t^2 + \q(2\t - \v))^{\gamma}},
\qquad t\le 1, \quad \v \in (\t-d/2,\t + d/2), \quad u,v \in [-1,1].
\end{align*}
\end{lem}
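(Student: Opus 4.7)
I would prove the three bounds in sequence, since each builds on the previous.

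For part (a), the symmetric interval $[\t-d/2,\t+d/2]$ is compactly contained in $(0,\pi)$, and $\mu_{\ab}$ is $C^{\infty}$ with bounded derivative on it. Both $\v$ and $2\t-\v$ lie in this interval, so the Mean Value Theorem immediately yields $|\mu_{\ab}(\v)-\mu_{\ab}(2\t-\v)|\lesssim |2\v-2\t|\lesssim |\t-\v|$, with the implicit constant depending on the fixed $\t$.

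For part (b), I would compute $\q-\q(2\t-\v)$ explicitly. Setting $\phi=\v-\t\in(-d/2,d/2)$ and using the angle-sum formulas
\begin{align*}
\sin\tfrac{\v}{2}&=\st\cos\tfrac{\phi}{2}+\ct\sin\tfrac{\phi}{2}, & \cos\tfrac{\v}{2}&=\ct\cos\tfrac{\phi}{2}-\st\sin\tfrac{\phi}{2},\\
\sin\tfrac{2\t-\v}{2}&=\st\cos\tfrac{\phi}{2}-\ct\sin\tfrac{\phi}{2}, & \cos\tfrac{2\t-\v}{2}&=\ct\cos\tfrac{\phi}{2}+\st\sin\tfrac{\phi}{2},
\end{align*}
a direct expansion shows that the terms involving $\cos(\phi/2)$ in $\q$ and in $\q(2\t-\v)$ are identical, leaving
\[
\q-\q(2\t-\v)=2(v-u)\st\ct\,\sin\tfrac{\phi}{2}=(v-u)\sin\t\,\sin\tfrac{\v-\t}{2}.
\]
Since $|\sin((\v-\t)/2)|\lesssim|\v-\t|$ and $\sin\t$ is a fixed constant, it remains to bound $|v-u|$ by $\q$ and by $\q(2\t-\v)$. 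The standard comparison $\q\simeq(\t-\v)^2+(1-u)\t\v+(1-v)(\pi-\t)(\pi-\v)$ of \cite[(22)]{NoSj}, which for the fixed $\t$ and $\v\in(\t-d/2,\t+d/2)$ reduces to $\q\simeq(\t-\v)^2+(1-u)+(1-v)$, together with the elementary observation $|v-u|\le(1-u)+(1-v)$ for $u,v\in[-1,1]$, gives $|v-u|\lesssim\q$. The same comparison with $\v$ replaced by $2\t-\v$ yields both $|v-u|\lesssim\q(2\t-\v)$ and the equivalence $\q\simeq\q(2\t-\v)$, completing~(b).

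For part (c), I would add and subtract $\mu_{\ab}(2\t-\v)/(\cht-1+\q)^{\gamma}$ to split the difference as
\begin{align*}
\frac{\mu_{\ab}(\v)-\mu_{\ab}(2\t-\v)}{(\cht-1+\q)^{\gamma}}+\mu_{\ab}(2\t-\v)\Big[(\cht-1+\q)^{-\gamma}-(\cht-1+\q(2\t-\v))^{-\gamma}\Big].
\end{align*}
The first summand is controlled by part (a) together with $\cht-1\simeq t^2$ from \eqref{comp1}, which gives $\cht-1+\q\gtrsim t^2+\q$. For the second, $\mu_{\ab}(2\t-\v)\simeq 1$ by \eqref{comp1}, and applying \eqref{3.2} with $x=\cht-1+\q$, $y=\cht-1+\q(2\t-\v)$, $\sigma=\gamma$ bounds the bracket by $|\q-\q(2\t-\v)|\cdot\big[(\cht-1+\q)^{-\gamma-1}+(\cht-1+\q(2\t-\v))^{-\gamma-1}\big]$. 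Using part (b) in both of its forms and the trivial estimate $\q\le\cht-1+\q$ (and analogously for $\q(2\t-\v)$), each resulting term collapses to $|\t-\v|/(t^2+\q)^{\gamma}$ or $|\t-\v|/(t^2+\q(2\t-\v))^{\gamma}$, respectively. The main obstacle is really the cancellation identity in (b): recognizing that the $\cos(\phi/2)$-contributions to $\q$ and to $\q(2\t-\v)$ cancel, leaving only the odd term $(v-u)\sin\t\sin(\phi/2)$, and then absorbing the prefactor $|v-u|$ into $\q$ via the bound $|v-u|\le(1-u)+(1-v)\lesssim\q$. Once this is in place, (a) and (c) reduce to standard Lipschitz and telescoping arguments.
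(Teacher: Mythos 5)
Your proposal is correct and follows essentially the same route as the paper's proof: (a) via the Mean Value Theorem on the compact interval, (b) via the exact trigonometric cancellation yielding $(u-v)\sin\t\,\sin\frac{\t-\v}{2}$ (your angle-addition expansion is algebraically the same as the paper's sum-to-product computation) together with $|u-v|\le(1-u)+(1-v)\lesssim\q\simeq\q(2\t-\v)$ from the localized comparability of $\q$, and (c) via the same add-and-subtract decomposition combined with (a), the elementary estimate \eqref{3.2}, \eqref{comp1}, and both forms of (b). No gaps.
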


\begin{proof}
   Item (a) is a straightforward consequence of the Mean Value Theorem and the inequalities
\[
0< \frac{\t - d/2}2
\le 
\frac{\v}{2}, \frac{2\t-\v}{2}
\le 
\frac{\t + d/2}2
 < \frac{\pi}{2},
 \qquad \v \in (\t-d/2,\t + d/2).
\]

Next we deal with (b).
The second relation there is an immediate consequence of \eqref{11.1}. To prove the first one we use
    the sum-to-product trigonometric formulas,
    $$\sin A - \sin B = 2 \sin \frac{A-B}{2} \cos \frac{A+B}{2} \quad \text{and} \quad
      \cos A - \cos B = -2 \sin \frac{A+B}{2} \sin \frac{A-B}{2},$$
    to get
    \begin{align*}
        \big| \q - \q(2\t-\v) \big|
            & =  \bigg| u \sin \frac{\t}{2} \Big( \sin \frac{2\t-\v}{2} - \sin \frac{\v}{2} \Big)
                    + v \cos \frac{\t}{2} \Big( \cos \frac{2\t-\v}{2} - \cos \frac{\v}{2} \Big)\bigg| \\
           & =  2 \st \ct \Big|  \sin \frac{\t-\v}{2} \Big|
                    |u - v|   \\
          &  \lesssim |\t-\v| \big[(1-u) + (1-v) \big]
            \lesssim |\t-\v| \, \q, \qquad \v \in (\t-d/2,\t + d/2), \quad u,v \in [-1,1],
    \end{align*}
    where the last inequality follows from \eqref{11.1}.

Finally, we justify item (c). 
Using the triangle inequality, already proved item (a), \eqref{3.2} and the estimates \eqref{comp1},
we see that
\begin{align*}
       &  \bigg| \frac{\mu_{\ab}(\v)}{(\cht - 1 + \q)^{\gamma}} - \frac{\mu_{\ab}(2\t-\v)}{(\cht - 1 + \q(2\t-\v))^{\gamma}} \bigg| \\
           & \qquad \leq \frac{|\mu_{\ab}(\v)-\mu_{\ab}(2\t-\v)|}{(\cht - 1 + \q)^{\gamma}}  
          + \mu_{\ab}(2\t-\v)
 \bigg| \frac{1}{(\cht - 1 + \q)^{\gamma}} - \frac{1}{(\cht - 1 + \q(2\t-\v))^{\gamma}}\bigg|  \\
        & \qquad \lesssim \frac{|\t - \v|}{(t^2 + \q)^{\gamma}}
            + |\q-\q(2\t-\v)| \bigg[\frac{1}{(t^2 + \q)^{\gamma + 1}} + \frac{1}{(t^2 + \q(2\t-\v))^{\gamma + 1}}\bigg].
    \end{align*}
Now the conclusion follows from just proved item (b).
\end{proof}

\begin{cor}\label{cor:---}
    Let $\ab >-1$, $\gamma > 0$ and  $\t \in (0,\pi)$ be fixed. Then
\begin{align*}
& \bigg| \frac{\partial_\t \q \, \mu_{\ab}(\v)}{(\cht - 1 + \q)^{\gamma}} + \frac{\partial_\t \q(2\t - \v) \, \mu_{\ab}(2\t-\v)}{(\cht - 1 + \q(2\t-\v))^{\gamma}} \bigg| \\
& \qquad  \lesssim
\frac{\q}{(t^2 + \q)^{\gamma}}
            + 
\frac{\q (2\t - \v)}{(t^2 + \q(2\t - \v))^{\gamma}},
\qquad t\le 1, \quad \v \in (\t-d/2,\t + d/2), \quad u,v \in [-1,1].
\end{align*}
\end{cor}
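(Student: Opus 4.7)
The plan is to exploit a sign cancellation between $\partial_\t\q$ and $\partial_\t\q(2\t-\v)$. Applying Lemma~\ref{lem:derqcomp} with the compact set $K=\{\t\}$, once with second argument $\v$ and once with second argument $2\t-\v$, one obtains
\begin{equation*}
\partial_\t\q = \tfrac{1}{2}\sin\tfrac{\t-\v}{2}+Q_1, \qquad \partial_\t\q(2\t-\v) = -\tfrac{1}{2}\sin\tfrac{\t-\v}{2}+Q_2,
\end{equation*}
with $|Q_1|\lesssim\q$ and $|Q_2|\lesssim\q(2\t-\v)$, uniformly in $\v\in(\t-d/2,\t+d/2)$ and $u,v\in[-1,1]$. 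The two leading terms differ only in sign, so they will cancel when the accompanying fractions are close to each other.

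Using this decomposition, the expression inside the absolute value in the corollary equals
\begin{equation*}
\tfrac{1}{2}\sin\tfrac{\t-\v}{2}\left[\frac{\mu_{\ab}(\v)}{(\cht-1+\q)^{\gamma}}-\frac{\mu_{\ab}(2\t-\v)}{(\cht-1+\q(2\t-\v))^{\gamma}}\right]+\frac{Q_1\,\mu_{\ab}(\v)}{(\cht-1+\q)^{\gamma}}+\frac{Q_2\,\mu_{\ab}(2\t-\v)}{(\cht-1+\q(2\t-\v))^{\gamma}},
\end{equation*}
and I would estimate the three pieces separately. For the first, Lemma~\ref{lem:---}(c) bounds the bracketed difference by a constant times $|\t-\v|/(t^2+\q)^{\gamma}+|\t-\v|/(t^2+\q(2\t-\v))^{\gamma}$; combined with $|\sin\tfrac{\t-\v}{2}|\le\tfrac{1}{2}|\t-\v|$ this produces a factor $(\t-\v)^2$. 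By the third identity in~\eqref{elem}, $(\t-\v)^2\simeq 1-\st\svp-\ct\cvp\le\q$, and the same argument with $\v$ replaced by $2\t-\v$ yields $(\t-\v)^2\lesssim\q(2\t-\v)$ (the quantity $\t-\v$ changing only by sign under this replacement). Hence this first piece is bounded as claimed.

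For the two remaining pieces I would invoke $|Q_1|\lesssim\q$ and $|Q_2|\lesssim\q(2\t-\v)$, together with $\mu_{\ab}(\v)\simeq\mu_{\ab}(2\t-\v)\simeq 1$ and $\cht-1+\q\simeq t^2+\q$, $\cht-1+\q(2\t-\v)\simeq t^2+\q(2\t-\v)$ for $t\le 1$, all from~\eqref{comp1}; each piece is then directly dominated by one of the two terms on the right-hand side of the corollary. I expect no serious obstacle: Lemma~\ref{lem:derqcomp} has already packaged the essential cancellation, and what remains is symmetric bookkeeping. The only mildly delicate point is to keep the two basepoints $\v$ and $2\t-\v$ on equal footing throughout, so that the final estimate comes out as the symmetric sum appearing in the statement.
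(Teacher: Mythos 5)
Your proposal is correct and follows essentially the same route as the paper's own proof: Lemma~\ref{lem:derqcomp} applied at both basepoints $\v$ and $2\t-\v$ (the sign flip of $\sin\frac{\t-\v}{2}$ giving the cancellation), the estimates \eqref{comp1} to absorb the $Q$-terms into the right-hand side, and Lemma~\ref{lem:---}\,(c) combined with $|\t-\v|^2\lesssim \q$ and $|\t-\v|^2\lesssim \q(2\t-\v)$ for the remaining difference. No changes are needed.
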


\begin{proof}
Applying Lemma~\ref{lem:derqcomp} and then using \eqref{comp1} together with the relation $\big| \sin \frac{\t-\v}{2} \big| \simeq |\t-\v|$, we see that the left-hand side in question is bounded by
\begin{align*}
& |\t - \v| 
\bigg| \frac{ \mu_{\ab}(\v)}{(\cht - 1 + \q)^{\gamma}} - 
\frac{\mu_{\ab}(2\t-\v)}{(\cht - 1 + \q(2\t-\v))^{\gamma}} \bigg| 
+
\frac{\q}{(t^2 + \q)^{\gamma}}
            + 
\frac{\q (2\t - \v)}{(t^2 + \q(2\t - \v))^{\gamma}},
\end{align*}
for $t\le 1$, $\v \in (\t-d/2,\t + d/2)$ and $u,v \in [-1,1]$.
Then the asserted estimate is a direct consequence of Lemma~\ref{lem:---} (c) and the bound $|\t - \v|^2 \lesssim \q$.
\end{proof}

Now we are ready to prove Lemma~\ref{lem:PV=0}.

\begin{proof}[Proof of Lemma~\ref{lem:PV=0}]
Fix $\t \in (0,\pi)$. 
Using Lemma~\ref{lem:thetaJP} we see that 
for every $\eps>0$ we have
    \begin{align}\label{est1}
          \int_{0, \, |\v - \t|>\eps}^{\pi} \int_0^\infty \big|\partial_{\t} {H}_t^{\ab}(\t,\v)\big| \, dt \, d\mu_{\ab}(\v)
            < \infty.
    \end{align}
Consequently, by Fubini's theorem (see \eqref{d4}),
    $$\lim_{\eps \to 0} \int_{0, \, |\v - \t|>\eps}^\pi R_1^{\ab}(\t,\v) \, d\mu_{\ab}(\v)
        = \lim_{\eps \to 0} \int_0^\infty \int_{0, \, |\v - \t|>\eps}^{\pi} \partial_{\t} {H}_t^{\ab}(\t,\v) \,  d\mu_{\ab}(\v) \, dt.$$
    Hence, in view of \eqref{zeroint}, 
our task is reduced to showing that we can pass with the limit under the first integral in the right-hand side above.
To prove that this is indeed legitimate we will use the dominated convergence theorem.
Taking into account the identity
    $$
\int_{\t+\eps}^{\t+d/2} \partial_{\t} {H}_t^{\ab}(\t,\v) \, d\mu_{\ab}(\v)
        = \int_{\t-d/2}^{\t-\eps} \partial_{\t} {H}_t^{\ab}(\t,2\t-\v) \mu_{\ab}(2\t-\v) \, d\v, \qquad t>0, \quad 0 < \eps < d/2,
$$
which is a consequence of a simple change of variable,
it is sufficient to verify that
\[
\int_1^\infty \int_{0}^{\pi} \big| \partial_{\t} {H}_t^{\ab}(\t,\v) \big| \,  d\mu_{\ab}(\v) \, dt
+
\int_0^1  \int_{0, \, |\v - \t|> d/2}^\pi
 \big| \partial_{\t} {H}_t^{\ab}(\t,\v) \big| \,  d\mu_{\ab}(\v) \, dt
< \infty
\] 
and 
\begin{equation}\label{goal2}
        \int_0^1 \int_{\t-d/2}^{\t} \big| \partial_{\t} {H}_t^{\ab}(\t,\v)\mu_{\ab}(\v) + \partial_{\t} {H}_t^{\ab}(\t,2\t-\v) \mu_{\ab}(2\t-\v) \big| \, d\v \, dt
            < \infty.
    \end{equation}
Finiteness of the first two double integrals follows from Lemma~\ref{lem:thetaJP} and \eqref{est1} (with $\eps=d/2$), respectively.

Showing \eqref{goal2} is much more involved 
since there are some important cancellations between the two terms inside the absolute value.
To verify \eqref{goal2} it is convenient to distinguish $4$ cases emerging from different integral representations for the Jacobi-Poisson kernel ${H}_t^{\ab}(\t,\v)$ derived in \cite[Proposition 2.3]{NSS}, see also \cite[(1)]{NSS}.

    \noindent \textbf{Case 1:} $\ab \geq -1/2$. 
Differentiating \cite[Proposition 2.3 (i)]{NSS} with respect to $\t$, see \cite[(1)]{NSS}, we obtain
    \begin{align}\label{qderi}
		\partial_{\t} {H}_t^{\ab}(\t,\v)
        = C_{\ab} \sht \iint \frac{\partial_\t \q}{(\cht - 1 + \q)^{\a+\b+3}} \, \pia \, \pib, \qquad t>0, \quad \v \in (0,\pi);
				\end{align}
				here and later on in Cases 2-4 passing with the differentiation in $\t$ under the double integral is justified by means of the dominated convergence theorem, see the comment in the proof of \cite[Corollary 3.5]{NSS}.
Taking into account Corollary~\ref{cor:---} (with $\gamma = \a + \b + 3$) we see that the integral in \eqref{goal2} is controlled by
\begin{equation*}
         I=\int_0^1 \int_{\t-d/2}^{\t+d/2} \iint \frac{t \, \q}{(t^2 + \q)^{\a+\b+3}} \, \pia \, \pib  \, d\v \, dt.
    \end{equation*}
    By means of
    Lemma~\ref{lem:intest2} (specified to $\nu=3/2$, $\gamma=2(\a+\b+3)$, $a=\sqrt{\q}$ and $B=1$)
    and Lemma~\ref{lem:intqest} (taken with $\xi_1=\xi_2=\kappa_1=\kappa_2=0$, $\xi=1/2$, $\eps=1/4$) we get
    \begin{align*}
        I & \simeq  \int_{\t-d/2}^{\t+d/2} \iint \q \int_0^1 \frac{t\, dt}{(\sqrt{\q}+t)^{2(\a+\b+3)}}  \, \pia \, \pib  \, d\v \\
            &  \simeq  \int_{\t-d/2}^{\t+d/2} \iint 
\frac{\pia \, \pib}{\q^{\a+\b+1}} \, d\v
              \lesssim \int_{\t-d/2}^{\t+d/2} 
\frac{d\v}{|\t-\v|^{1/2}}
              < \infty.
    \end{align*}
This finishes proving \eqref{goal2} for $\ab \geq -1/2$.

    \noindent \textbf{Case 2:} $-1 < \a < -1/2 \leq \b$. This time  \cite[Proposition 2.3 (ii)]{NSS} leads to
\begin{equation}
\begin{split}
        \partial_{\t} {H}_t^{\ab}(\t,\v)
            = &  \; C_{\ab}^1 \sht \iint \frac{\partial_\t \q \, \partial_u \q}{(\cht - 1 + \q)^{\a+\b+4}} \, \Pi_\a(u)du \, \pib \\ \label{qderii}
            & + C_{\ab}^2 \sht \iint \frac{\partial_\t \partial_u \q }{(\cht - 1 + \q)^{\a+\b+3}} \, \Pi_\a(u)du \, \pib \\ 
            & + C_{\ab}^3 \sht \iint \frac{\partial_\t \q }{(\cht - 1 + \q)^{\a+\b+3}} \, d\Pi_{-1/2}(u) \, \pib, 
\quad \, \, \, \, t>0, \quad \v \in (0,\pi).
\end{split}
\end{equation}
Using the bounds $|\partial_\t \partial_u \q| \lesssim 1$,
\eqref{est4.5}
and the comparability \eqref{smallab},
the expression
    in \eqref{goal2} is controlled by
    \begin{align*}
 &  \int_0^1 \int_{\t-d/2}^{\t} t  \iint \bigg|\frac{\partial_\t \q \, \partial_u \q \, \mu_{\ab}(\v)}{(\cht - 1 + \q)^{\a+\b+4}}
                    + \frac{\partial_\t \q(2\t-\v) \, \partial_u \q(2\t-\v) \, \mu_{\ab}(2\t-\v)}{(\cht - 1 + \q(2\t-\v))^{\a+\b+4}}\bigg| \, d\Pi_{\a+1}(u) \, \pib \, d\v \, dt \\
 & \qquad + 
\int_0^1 \int_{\t-d/2}^{\t+d/2}  \iint \frac{t}{(t^2 + \q)^{\a+\b+3}} \, d\Pi_{\a+1}(u) \, \pib \, d\v \, dt \\
  & \qquad +
 \int_0^1 \int_{\t-d/2}^{\t+d/2}  \iint 
\frac{t \, \sqrt{\q} }{(t^2 + \q)^{\a+\b+3}} 
\, d\Pi_{-1/2}(u) \, \pib \, d\v  \, dt
\equiv  I_1 + I_2 + I_3.
\end{align*}
Observe that 
$$
\partial_u  \q \, \mu_{\ab}(\v)
        = - \st \, \mu_{\a+1/2,\b}(\v), 
\qquad \v \in (0,\pi), \quad u,v \in [-1,1],
$$
which, with the aid of Corollary~\ref{cor:---} (taken with $\a$ replaced by $\a+1/2$ and $\gamma = \a + \b + 4$), yields
\[
I_1 \lesssim
\int_0^1 \int_{\t-d/2}^{\t+d/2}  \iint \frac{t \, \q}{(t^2 + \q)^{\a+\b+4}} 
\, d\Pi_{\a+1}(u) \, \pib \, d\v \, dt
\le 
I_2.
\]
Therefore it is enough to estimate $I_2 + I_3$. Combining Lemma~\ref{lem:intest2} (applied to the integrals with respect to $t$ and
specified to $\nu=3/2$, $\gamma=2(\a+\b+3)$, $a=\sqrt{\q}$, $B=1$) with Lemma~\ref{lem:intqest} 
(taken with $\kappa_1=\xi_2=\kappa_2=0$ and
$\xi_1 = - \a - 1/2$, $\xi= \a + 3/2 > 1/2$ if $K=0$ and $\xi_1=1$, $\xi=1/2$,
$\eps= \a + 1$ if $K=1$) we obtain
\begin{align*}
I_2 + I_3 \simeq
\sum_{K=0,1} \int_{\t-d/2}^{\t+d/2}  \iint \frac{1}{\q^{\a+\b+3/2 +K/2}} 
\, \piK \, \pib \, d\v 
\lesssim
\int_{\t-d/2}^{\t+d/2} \frac{d\v}{|\t-\v|^{2(\a+1)}} 
< \infty,
\end{align*}
which gives \eqref{goal2} in case $-1 < \a < -1/2 \leq \b$.

    \noindent \textbf{Case 3:} $-1 < \b < -1/2 \leq \a$. 
This case is parallel to Case 2, details are left to the reader.

    \noindent \textbf{Case 4:} $-1 < \ab < -1/2$. Differentiating $H_t^{\ab}(\t,\v)$, see \cite[Proposition 2.3 (iv)]{NSS}, produces
\begin{equation}
\begin{split}
\partial_{\t} H_t^{\ab}(\t,\v)
            =\; & C_{\ab}^1 \sht \iint \frac{\partial_\t \q \, \partial_u \q \, \partial_v \q}{(\cht - 1 + \q)^{\a+\b+5}} \, \Pi_\a(u)du \, \Pi_\b(v)dv \\
            & + C_{\ab}^2 \sht \iint \frac{\partial_\t \partial_u \q \, \partial_v \q + \partial_\t \partial_v \q \, \partial_u \q}{(\cht - 1 + \q)^{\a+\b+4}} \, \Pi_\a(u)du \, \Pi_\b(v)dv \\ \label{qderiv}
            & + C_{\ab}^3 \sht \iint \frac{\partial_\t \q \, \partial_u \q}{(\cht - 1 + \q)^{\a+\b+4}} \, \Pi_\a(u)du \, d\Pi_{-1/2}(v) \\
            & + C_{\ab}^4 \sht \iint \frac{\partial_\t  \partial_u \q }{(\cht - 1 + \q)^{\a+\b+3}} \, \Pi_\a(u)du \, d\Pi_{-1/2}(v) \\
            & + C_{\ab}^5 \sht \iint \frac{\partial_\t \q \, \partial_v \q}{(\cht - 1 + \q)^{\a+\b+4}} \, d\Pi_{-1/2}(u) \, \Pi_\b(v)dv \\
            & + C_{\ab}^6 \sht \iint \frac{\partial_\t \partial_v \q }{(\cht - 1 + \q)^{\a+\b+3}} \, d\Pi_{-1/2}(u) \, \Pi_\b(v)dv \\
            & + C_{\ab}^7 \sht \iint \frac{\partial_\t \q }{(\cht - 1 + \q)^{\a+\b+3}} \, d\Pi_{-1/2}(u) \, d\Pi_{-1/2}(v), 
\end{split}
\end{equation}
for $t>0$ and $\v \in (0,\pi)$. Using now the estimates 
   $|\partial_\t \partial_u \q|, |\partial_\t \partial_v \q|, 
|\partial_u \q|, |\partial_v \q| \lesssim 1$, \eqref{est4.5} and \eqref{smallab}, we see that
the left-hand side of \eqref{goal2} is bounded by 
\begin{align*}
	& \int_0^1 \int_{\t-d/2}^{\t} t  \iint \bigg|\frac{\partial_\t \q \, \partial_u \q \, \partial_v \q \, \mu_{\ab}(\v)}{(\cht - 1 + \q)^{\a+\b+5}}
                + \frac{\partial_\t \q(2\t-\v) \, \partial_u \q(2\t-\v) \, \partial_v \q(2\t-\v) \, \mu_{\ab}(2\t-\v)}{(\cht - 1 + \q(2\t-\v))^{\a+\b+5}}\bigg|  \\
	             & \qquad \qquad \qquad \quad \qquad \qquad \qquad \quad \times 
d\Pi_{\a+1}(u) \, d\Pi_{\b+1}(v) \, d\v \, dt \\
	& \qquad + 
\sum_{\substack{K,R = 0,1 \\ K+R > 0 }} 
\int_0^1 \int_{\t-d/2}^{\t+d/2}  \iint \frac{t}{(t^2 + \q)^{\a+\b+2 + K + R}} 
\, \piK \, \piR \, d\v \, dt \\
& \qquad + 
\sum_{\substack{K,R = 0,1 \\ K+R \le 1 }} 
\int_0^1 \int_{\t-d/2}^{\t+d/2}  \iint \frac{t \, \sqrt{\q}}
{(t^2 + \q)^{\a+\b+3 + K + R}} 
\, \piK \, \piR \, d\v \, dt 
\equiv I_1 + I_2 + I_3.
\end{align*}

We first deal with $I_1$. It is easy to see that
$$
\partial_u  \q  \, \partial_v  \q \, \mu_{\ab}(\v)
        = \st \, \ct \, \mu_{\a+1/2,\b+1/2}(\v), 
\qquad \v \in (0,\pi), \quad u,v \in [-1,1],
$$
which together with Corollary~\ref{cor:---} (with $\ab$ replaced by $\a+1/2$, $\b + 1/2$, respectively, and $\gamma = \a + \b + 5$) leads to
\[
I_1 \lesssim
\int_0^1 \int_{\t-d/2}^{\t+d/2}  \iint \frac{t \, \q}{(t^2 + \q)^{\a + \b + 5}} 
\, \piaa \, \pibb \, d\v \, dt \le I_2.
\]
Therefore it suffices to show that $I_2$ and $I_3$ are finite.

Applying Lemma~\ref{lem:intest2} 
(choosing $\nu=3/2$, $\gamma=2(\a+\b+ 2 + K+R+ W)$, $a=\sqrt{\q}$, $B=1$, 
where $W\in \{ 0 , 1 \}$)
and then Lemma~\ref{lem:intqest} 
(with $\xi_1 = (- \a - 1/2)(1 - K) + K$, $\xi_2= (- \b - 1/2)(1 - R) + R$, $\kappa_1=\kappa_2=0$, $\eps=1/4$ and 
$0 < \xi= 1/2 - (- \a - 1/2)(1 - K) - (- \b - 1/2)(1 - R) \le 1/2$ 
in case of $I_2$ and 
$0 < \xi=(\a + 1)(1-K)+(\b + 1)(1-R)+(K+R)/2 < 1$ in case of $I_3$) 
we arrive at the bound
\begin{align*}
I_2 + I_3 & \lesssim
\sum_{\substack{K,R = 0,1 \\ K+R > 0 }} 
\int_{\t-d/2}^{\t+d/2}  \iint \frac{\piK \, \piR}{\q^{\a+\b+1 + K + R}} 
\, d\v  
+ 
\sum_{\substack{K,R = 0,1 \\ K+R \le 1 }} 
\int_{\t-d/2}^{\t+d/2}  \iint \frac{\piK \, \piR}
{\q^{\a+\b+ 3/2 + K + R}} 
\, d\v \\
& \lesssim
\int_{\t-d/2}^{\t+d/2} \frac{d\v}{|\t-\v|^{1/2}} 
 + 
\sum_{\substack{K,R = 0,1 \\ K+R \le 1 }} 
\int_{\t-d/2}^{\t+d/2} 
\frac{d\v}{|\t-\v|^{2(\a + 1)(1-K)+2(\b + 1)(1-R)+K+R-1}}
< \infty.
\end{align*}
This finishes the reasoning for the case of $-1 < \ab < -1/2$. 

The proof of Lemma~\ref{lem:PV=0} is completed.
\end{proof}

\section{Appendix: proof of Proposition \ref{pro:unbound}} \label{sec:diff}

To begin with, we reduce the task to proving boundedness properties for simpler operators. 
Observe that 
for $f \in C_c^\infty (0,\pi)$
\begin{align*}
R_{2}^{\ab} f (\t) &= 
- f(\t)
- (\a+1/2)\cot\frac{\t}2 \, \partial_{\t} \big(\J^{\ab}\big)^{-1} f(\t)
+(\b+1/2)\tan\frac{\t}2 \, \partial_{\t} \big(\J^{\ab}\big)^{-1} f(\t) \\ 
& \quad + \tau_{\ab}^2 \big(\J^{\ab}\big)^{-1} f(\t), 
\qquad \t \in (0,\pi),
\end{align*}
where one should replace $f$ on the right-hand side by $\Pi_0 f$ when $\tau_{\ab} = 0$.
Since $(\J^{\ab})^{-1}$ is bounded on $L^1(d\mu_{\ab})$, it suffices to consider
\begin{align*}
T_1^{\ab} f (\t) 
= \cot\frac{\t}2 \, \partial_{\t} \big(\J^{\ab}\big)^{-1} f(\t)
\quad \textrm{and} \quad
T_2^{\ab} f (\t) 
= \tan\frac{\t}2 \, \partial_{\t} \big(\J^{\ab}\big)^{-1} f(\t),
\qquad \t \in (0,\pi),
\end{align*}
with appropriate modification when $\tau_{\ab} = 0$.
For symmetry reasons, we have 
$T_1^{\b,\a} f (\t) = - T_2^{\ab} \widetilde{f} (\pi - \t)$, where 
$\widetilde{f} (\t) = f (\pi - \t)$. Therefore proving Proposition~\ref{pro:unbound} reduces to showing the following.

\begin{lem}\label{lem:T1}
Let $\ab >-1$. Then $T_1^{\ab}$ is a bounded operator from $L^1(d\mu_{\ab})$ to $L^1((3\pi/4,\pi),d\mu_{\ab})$ and unbounded from $L^1(d\mu_{\ab})$ to $L^1((0,\pi/4),d\mu_{\ab})$.
\end{lem}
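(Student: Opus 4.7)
The plan is to exploit the integral representation of $T_1^{\alpha,\beta}$ via its kernel
\[
T_1^{\alpha,\beta}(\theta,\varphi)=\cot\tfrac{\theta}{2}\,\partial_\theta K_1^{\alpha,\beta}(\theta,\varphi)
\]
(with the compensated variant $\widetilde{K}_1^{\alpha,\beta}$ when $\tau_{\alpha,\beta}=0$; see Lemma~\ref{lem:Step1}), together with the fact that the $L^1(d\mu_{\alpha,\beta})\to L^1(A,d\mu_{\alpha,\beta})$ boundedness of $T_1^{\alpha,\beta}$ is equivalent to
\[
\esssup_{\varphi}\int_A\big|T_1^{\alpha,\beta}(\theta,\varphi)\big|\,d\mu_{\alpha,\beta}(\theta)<\infty,
\]
necessity being obtained by testing against normalised approximations to $\delta_\varphi$. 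Accordingly, the two halves of the lemma follow from verifying this criterion on $A=(3\pi/4,\pi)$ and refuting it on $A=(0,\pi/4)$.

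For the positive half ($A=(3\pi/4,\pi)$), $\cot(\theta/2)$ is uniformly bounded on $A$, so it suffices to show that $\int_{3\pi/4}^{\pi}|\partial_\theta K_1^{\alpha,\beta}(\theta,\varphi)|\,d\mu_{\alpha,\beta}(\theta)$ is bounded uniformly in $\varphi$. For $\theta\in[3\pi/4,\pi-\varepsilon]$ this is immediate from Lemma~\ref{lem:thetaK} (with $m=1$, $j=1$), which gives $|\partial_\theta K_1^{\alpha,\beta}|\lesssim 1$ uniformly in $\varphi$. To bound the remaining strip $\theta\in(\pi-\varepsilon,\pi)$ I would use the reflection symmetry $H_t^{\alpha,\beta}(\theta,\varphi)=H_t^{\beta,\alpha}(\pi-\theta,\pi-\varphi)$ (immediate from \eqref{d0}) and rerun the calculations of Section~\ref{sec:weightRiesz} with $\alpha$ and $\beta$ interchanged and $\theta,\varphi$ reflected; the $(\pi-\theta)^{2\beta+1}$-factor in $d\mu_{\alpha,\beta}$ absorbs kernel singularities near the endpoint uniformly in $\varphi$.

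For the negative half ($A=(0,\pi/4)$), $\cot(\theta/2)\simeq 2/\theta$ is singular. The plan is to produce a sequence $\varphi_n\to 0^+$ in $(0,\pi/4)$ for which $\int_0^{\pi/4}|T_1^{\alpha,\beta}(\theta,\varphi_n)|\,d\mu_{\alpha,\beta}(\theta)\to\infty$. Focusing on the off-diagonal region $\theta\in(2\varphi_n,\pi/4)$ (where the integrand has a definite sign so that absolute values and signed estimates coincide) and combining $\cot(\theta/2)\simeq 2/\theta$ with a lower bound on $\partial_\theta K_1^{\alpha,\beta}$ of the same order as the upper bound produced by integrating Lemma~\ref{lem:thetaJP} in $t$, I would obtain a divergence of order $1/\varphi_n$ (up to $\alpha$-dependent constants). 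The mechanism applies uniformly in all $\alpha,\beta>-1$.

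The principal obstacle is producing these matching lower bounds. The upper bound side is standard via Lemma~\ref{lem:thetaJP}. For the lower bound I would work with the explicit representations \eqref{qderi}, \eqref{qderii}, \eqref{qderiv} of $\partial_\theta H_t^{\alpha,\beta}$ used in Section~\ref{sec:PV0}, identifying a subregion of the $(u,v)$-square where $\partial_\theta\mathfrak{q}$ does not change sign (cf.\ Lemma~\ref{lem:derqcomp}) and has magnitude at least of order $|\theta-\varphi|$, and then integrating in $t$ as in the proof of Lemma~\ref{lem:thetaJP}. The delicate point is ensuring that the contributions from the remaining sub-regions cannot cancel the principal part — this parallels the sign-tracking arguments used throughout Section~\ref{sec:PV0} and is where the bulk of the work will lie.
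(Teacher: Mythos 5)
Your skeleton matches the paper's actual proof: the paper also runs everything through the Schur test (its Lemma~\ref{lem:Schur}, whose converse requires a kernel of fixed sign --- exactly the role your delta-testing plays), obtains the kernel of $T_1^{\ab}$ from Lemmas~\ref{lem:Step1} and \ref{lem:diffK}, proves boundedness on $(3\pi/4,\pi)$ by letting the weight $(\pi-\t)^{2\b+1}$ in $d\mu_{\ab}$ absorb the kernel singularity, and proves unboundedness on $(0,\pi/4)$ by extracting a sign-definite piece whose Schur integral diverges. One repair is needed in your positive half: Lemma~\ref{lem:thetaK} is stated only for $\t$ in a \emph{compact} subset of $(0,\pi)$, and the reflection $H_t^{\ab}(\t,\v)=H_t^{\b,\a}(\pi-\t,\pi-\v)$ merely moves the problematic strip from $\pi$ to $0$, where Section~\ref{sec:weightRiesz} again gives nothing; what actually closes this is the endpoint-uniform estimate of Lemma~\ref{lem:thetaJP} combined with Lemma~\ref{lem:intest2}, which yields $|K(\t,\v)|\lesssim 1+(\pi-\t)^{-2\b-1}|\t-\v|^{-1/2}$ and hence the Schur bound after multiplying by $(\pi-\t)^{2\b+1}$. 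This is fixable, but as proposed the positive half is circular.

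The genuine gap is in your quantitative mechanism for the negative half. You propose a lower bound on $\partial_\t K_1^{\ab}$ ``of the same order as the upper bound produced by integrating Lemma~\ref{lem:thetaJP} in $t$'', which on $2\v<\t<\pi/4$ would give $|T_1^{\ab}(\t,\v)|\gtrsim\t^{-2\a-3}$ and a divergence of order $1/\v$. This is false: the bound of Lemma~\ref{lem:thetaJP} is not sharp in this regime, because by \eqref{iden1} one has $\partial_\t\q=\frac12\sin\frac{\t-\v}{2}+\frac12[(1-u)\ct\svp-(1-v)\st\cvp]$, whose leading term has size $|\t-\v|$ rather than $\sqrt{\q}$, and the remaining terms lose a further power upon integration in $u,v$ through the cancellations exploited in Lemma~\ref{lem:uintcanc} (oddness of $u\,\pia$ and of $\Pi_\a(u)\,du$). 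The paper's sign-definite piece satisfies the two-sided estimate $\chi_{\{\pi/4\ge\t>2\v\}}L_1(\t,\v)\simeq\t^{-2\a-2}$ (cotangent included), all complementary pieces are Schur-bounded, and the Schur integral diverges only like $\log\frac{\pi}{8\v}$; a pointwise lower bound of order $\t^{-2\a-3}$ is therefore unattainable, and pursuing it would stall the argument (only the qualitative conclusion survives, since any divergence suffices). Relatedly, your parenthetical claim that the full integrand is sign-definite on $\t\in(2\v,\pi/4)$ is inaccurate: the $-(1-v)\st\cvp$ contribution can dominate for $v$ near $-1$, so sign-definiteness holds only for the extracted principal part (the $J_1$-type kernels in the paper), after the complementary parts --- including the near-diagonal region $\t\le 2\v$, which needs the separate reflection-type cancellation of Lemma~\ref{lem:-1/2} --- are proved Schur-bounded via Lemmas~\ref{lem:K=0pi4} and \ref{lem:**}. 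Finally, ``the mechanism applies uniformly in all $\a,\b>-1$'' glosses over the fact that the representation of $\partial_\t H_t^{\ab}$ changes across $\a,\b\gtrless-1/2$ (the four cases \eqref{qderi}, \eqref{qderii}, its mirror, and the seven-term \eqref{qderiv}), and the divergent piece sits in different terms in different cases; this case analysis, which you defer as ``the bulk of the work'', is precisely the substance of the paper's proof, so what you have is a correct-in-outline plan with a quantitatively false key step and the decisive cancellation analysis unperformed.
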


The key tool which allows us to obtain this result is the well-known Schur criterion.

\begin{lem}\label{lem:Schur}
Let $(X,\mu)$, $(Y,\nu)$ be $\sigma$-finite measure spaces and let $K(x,y)$ be a measurable 
complex-valued kernel defined on $X \times Y$. If there exists a constant $C>0$ such that
\begin{align}\label{Schurest}
\int_{X} |K(x,y)| \, d\mu(x) \le C, \qquad \textrm{a.a. } y \in Y,
\end{align}
then the integral operator $Tf(x) = \int_Y K(x,y) f(y) \, d\nu(y)$ is bounded from $L^1(Y,\nu)$ to $L^1(X,\mu)$.
Moreover, when $K$ is non-negative, the converse is true: boundedness of $T$ from $L^1(Y,\nu)$ to $L^1(X,\mu)$ implies \eqref{Schurest}.
\end{lem}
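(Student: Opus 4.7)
The plan is to prove both implications via Fubini--Tonelli, using nothing beyond standard measure theory; the $\sigma$-finiteness of $(X,\mu)$ and $(Y,\nu)$ is what makes the exchange of order of integration legitimate in both directions.

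For the forward implication, assume \eqref{Schurest} holds with constant $C$. Given $f \in L^1(Y,\nu)$, I would first observe that the function $(x,y) \mapsto |K(x,y)f(y)|$ is measurable and non-negative on $X \times Y$, so Tonelli's theorem applies and gives
\begin{align*}
\int_X \int_Y |K(x,y) f(y)| \, d\nu(y) \, d\mu(x)
 = \int_Y |f(y)| \int_X |K(x,y)| \, d\mu(x) \, d\nu(y)
\le C \, \|f\|_{L^1(Y,\nu)}.
\end{align*}
In particular, for $\mu$-a.a.\ $x \in X$ the integrand defining $Tf(x)$ is absolutely $\nu$-integrable, so $Tf(x)$ is well defined a.e., and $\int_X |Tf(x)| \, d\mu(x) \le C \|f\|_{L^1(Y,\nu)}$ by monotonicity. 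This yields the bound.

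For the converse, assume $K \ge 0$ and that $T\colon L^1(Y,\nu) \to L^1(X,\mu)$ is bounded with norm $\|T\|$. Set $g(y) := \int_X K(x,y)\, d\mu(x)$, which is a well-defined $[0,\infty]$-valued measurable function by Tonelli. My goal is to show $g \in L^\infty(Y,\nu)$ with $\|g\|_\infty \le \|T\|$. Suppose for contradiction that the set $E = \{ y : g(y) > \|T\| + \varepsilon \}$ has positive $\nu$-measure for some $\varepsilon > 0$. By $\sigma$-finiteness of $\nu$, I can extract a measurable subset $E' \subset E$ with $0 < \nu(E') < \infty$. Applying $T$ to $f = \chi_{E'} \in L^1(Y,\nu)$, non-negativity of $K$ together with Tonelli yields
\begin{align*}
\|Tf\|_{L^1(X,\mu)}
 = \int_X \int_Y K(x,y) \chi_{E'}(y) \, d\nu(y) \, d\mu(x)
 = \int_{E'} g(y) \, d\nu(y)
 \ge (\|T\| + \varepsilon) \, \nu(E'),
\end{align*}
which contradicts $\|Tf\|_{L^1(X,\mu)} \le \|T\| \, \|f\|_{L^1(Y,\nu)} = \|T\| \, \nu(E')$. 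Hence $\essup_{y} g(y) \le \|T\|$, which is \eqref{Schurest} with $C = \|T\|$.

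The only delicate point is the converse: one must ensure that $g$ is genuinely measurable (Tonelli) and that one can pass from $g \in L^\infty$ to the pointwise bound \eqref{Schurest} holding a.e., which is why the $\sigma$-finiteness assumption on $\nu$ is used to produce the truncating set $E'$ of finite positive measure. No other obstacle appears; the forward direction requires essentially no more than the Tonelli/Fubini theorem.
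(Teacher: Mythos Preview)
Your proof is correct. The paper does not actually supply a proof of this lemma; it merely introduces it as the ``well-known Schur criterion'' and states it without argument, so there is no paper proof to compare against. Your Fubini--Tonelli argument for the forward direction and the contradiction via characteristic functions of finite-measure sets for the converse are the standard way to establish this result.
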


In the proof of Lemma~\ref{lem:T1} we will need also several technical results, which are gathered below. We begin with the following modification of Lemma~\ref{lem:intqest} 
(corresponding to $K=(0,\pi/4)$, which is not admitted there).

\begin{lem}\label{lem:K=0pi4} Let $\ab > -1$ be fixed. Assume that
$\xi, \xi_1,\xi_2,\kappa_1,\kappa_2 \ge 0$ are fixed and such that
$\a+\xi_1+\kappa_1, \, \b+\xi_2+\kappa_2 \ge -1/2$. Then
\begin{align*}
\iint \frac{d\Pi_{\a+\xi_1+\kappa_1}(u) \, d\Pi_{\b+\xi_2+\kappa_2}(v) }{ \q ^{\a+\b+\xi_1+\xi_2+\xi+1\slash 2}} 
     &  \lesssim  
 1 
+ \chi_{\{ \a + \xi_1 + \xi = 0 \}} \log^+ \frac{1}{|\t - \v|}
 + \bigg( \frac{1}{\t + \v} \bigg)^{ 2(\a + \xi_1 + \xi)} \\
& \quad \times \bigg[ 1 + \chi_{\{\xi=1/2\}} \log \frac{\t + \v}{|\t - \v|} 
+ \chi_{\{\xi>1/2\}} \bigg( \frac{\t + \v }{|\t - \v|} \bigg)^{2\xi - 1} \bigg], 
\end{align*}
uniformly in $\t \in (0,\pi/4)$, $\v \in (0,\pi)$, $\t \ne \vp$.
\end{lem}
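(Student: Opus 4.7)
The plan is to follow the skeleton of the proof of Lemma~\ref{lem:intqest}, but modified to accommodate $\t$ approaching the endpoint $0$. First I would dispose of the range $\v \in (3\pi/4,\pi)$: there $|\t-\v| \ge \pi/2$, hence by \eqref{elem} one has $\q \gtrsim 1$ uniformly in $u,v$, so the double integral is bounded by a constant and contributes only the leading $1$ in the asserted bound.

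Throughout the rest of the argument I would restrict to $\v \in (0,3\pi/4)$, where $\cos\frac{\t}{2}\cos\frac{\v}{2} \simeq 1$ and, by direct computation from \eqref{elem},
\[
\q \simeq (\t-\v)^2 + (1-u)\t\v + (1-v).
\]
As in the proof of Lemma~\ref{lem:intqest}, the case $\tilde{\a}+\tilde{\b}+\xi+1/2 \le 0$ is trivial by the boundedness of $\q$, so I may assume $\tilde{\a}+\tilde{\b}+\xi+1/2 > 0$, where $\tilde{\a}=\a+\xi_1$ and $\tilde{\b}=\b+\xi_2$. I would then apply Corollary~\ref{cor:intest3} to the inner $u$-integral with $\nu=\tilde{\a}+\kappa_1$, $\gamma=\tilde{\a}+\tilde{\b}+\xi+1/2$, $A=1-v\cos\frac{\t}{2}\cos\frac{\v}{2}$ and $B=\sin\frac{\t}{2}\sin\frac{\v}{2}$. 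The decisive new feature compared to Lemma~\ref{lem:intqest} is that here $A \simeq (1-v)+(\t+\v)^2$ rather than $A \simeq 1$; this $A^{-?}$ factor is precisely what produces the $(\t+\v)^{-2(\a+\xi_1+\xi)}$ factor in the claimed estimate.

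For the subsequent $v$-integration I would change variable $s=1-v$, approximate the measure $d\Pi_{\tilde{\b}+\kappa_2}(v)$ near $v=1$ by $s^{\tilde{\b}+\kappa_2-1/2}\,ds$, and observe that contributions from a neighborhood of $v=-1$ are trivially bounded because there $A$ and $A-B$ are both $\gtrsim 1$. This reduces the problem to controlling integrals of the type
\[
\int_0^{c} \frac{s^{\tilde{\b}+\kappa_2-1/2}}{\bigl[(\t+\v)^2+s\bigr]^{p}\bigl[(\t-\v)^2+s\bigr]^{q}}\,ds
\]
for suitable $p,q \ge 0$ dictated by the three cases $\tilde{\b}+\xi \gtrless \kappa_1$ arising from Corollary~\ref{cor:intest3}. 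I would split the $s$-integration at $s=(\t+\v)^2$: on $s \le (\t+\v)^2$ the bracket containing $(\t+\v)^2$ is essentially constant and the remaining $s$-integral is controlled by Lemma~\ref{lem:intest2} (after an obvious rescaling of the upper limit), while on $s > (\t+\v)^2$ both brackets collapse to $s$ and the integral is evaluated directly via \eqref{rel}. The three characteristic terms in the claim, namely $\chi_{\{\a+\xi_1+\xi=0\}}\log^+(1/|\t-\v|)$, $\chi_{\{\xi=1/2\}}\log((\t+\v)/|\t-\v|)$ and $\chi_{\{\xi>1/2\}}((\t+\v)/|\t-\v|)^{2\xi-1}$, correspond exactly to the boundary cases in which the exponents appearing in Lemma~\ref{lem:intest2} or \eqref{rel} reach the critical value $-1$.

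The chief obstacle is the bookkeeping. The double application of Corollary~\ref{cor:intest3} combined with the three-way splittings in both the $u$- and $v$-integrations produces a sizeable case analysis, and one must carefully track the critical exponents so as to obtain precisely the logarithmic and polynomial singularities stated. In particular, matching the characteristic functions $\chi_{\{\a+\xi_1+\xi=0\}}$, $\chi_{\{\xi=1/2\}}$ and $\chi_{\{\xi>1/2\}}$ requires identifying when the strict inequalities determining which branch of Corollary~\ref{cor:intest3} and Lemma~\ref{lem:intest2} applies degenerate into equalities, so that each limiting case contributes only the claimed logarithmic correction rather than an actual power blow-up.
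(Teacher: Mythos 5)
Your plan is correct in outline, but it takes a genuinely different route from the paper at the decisive step. The paper's proof (after reducing, without loss of generality, to $\xi_1=\xi_2=0$) \emph{reverses} the order of integration relative to Lemma~\ref{lem:intqest}: since $\t\in(0,\pi/4)$ keeps $\st\svp$ bounded away from $1$, it applies Corollary~\ref{cor:intest3} first to the $v$-integral, with $A=1-u\st\svp\simeq 1$ and $B=\ct\cvp$ (branching on $\a+\xi\gtrless\kappa_2$), and only then to the $u$-integral, with $A=1-\ct\cvp\simeq(\t+\v)^2$ and $B=\st\svp$. In this way both applications of the corollary stay in the clean regime where one of $A$, $A-B$ is comparable to a single scale, the factor $(\t+\v)^{-2(\a+\xi_1+\xi)}$ drops out of the second application, and no restriction of $\v$ is needed, because $1-\ct\cvp\simeq(\t+\v)^2$ holds for all $\t,\v\in(0,\pi)$ by \eqref{elem}. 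You instead keep the order of Lemma~\ref{lem:intqest}, integrating in $u$ first against the genuinely two-scale quantity $A\simeq(1-v)+(\t+\v)^2$, and then evaluate by hand one-dimensional integrals of the form $\int_0^c s^{\tilde{\b}+\kappa_2-1/2}[(\t+\v)^2+s]^{-p}[(\t-\v)^2+s]^{-q}\,ds$ via the splitting at $s=(\t+\v)^2$ (and implicitly at $s=(\t-\v)^2$). This reduction is sound and the dominant balances do reproduce exactly the three characteristic terms of the statement: for instance, in the middle region $(\t-\v)^2\le s\le(\t+\v)^2$ the $s$-exponent is $\kappa_1+\kappa_2-\xi-1/2$, critical precisely when $\xi=\kappa_1+\kappa_2+1/2$, which with $\kappa_1=\kappa_2=0$ yields the $\chi_{\{\xi=1/2\}}\log\frac{\t+\v}{|\t-\v|}$ term, while the outer region $s>(\t+\v)^2$ is critical exactly when $\tilde{\a}+\xi=\kappa_2$, which for $\kappa_2=0$ produces the $\chi_{\{\a+\xi_1+\xi=0\}}\log^+$ term. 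What the paper's swap buys is that the two-bracket integrals never appear explicitly, so the case analysis is shorter; what your route buys is uniformity with the proof of Lemma~\ref{lem:intqest} at the cost of heavier bookkeeping.

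Two points in your plan still need patching, though both are routine. First, the borderline branch $\tilde{\b}+\xi=\kappa_1$ of Corollary~\ref{cor:intest3} in your $u$-integration yields an additional factor $\log^+\big(B/(A-B)\big)$ inside the $v$-integral, which your three-case description of the exponents $p,q$ covers only implicitly; it must be absorbed via \eqref{logest} with a small $\rho>0$ chosen so as not to spoil the critical characteristic-function terms (the paper's treatment of its term $I_2$, split according to whether $\a+\kappa_1=-1/2$ or $\a+\kappa_1>-1/2$, shows exactly the kind of care required). Second, your approximation of $d\Pi_{\tilde{\b}+\kappa_2}(v)$ near $v=1$ by the density $s^{\tilde{\b}+\kappa_2-1/2}\,ds$ fails in the admissible endpoint case $\tilde{\b}+\kappa_2=-1/2$, where $d\Pi_{-1/2}$ is atomic; there the $v$-integration degenerates to evaluation at $v=\pm1$ (i.e.\ $s=0$), which is consistent with your bounds but must be treated separately — the paper avoids this issue automatically, since Corollary~\ref{cor:intest3} includes $\nu=-1/2$.
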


\begin{proof}
Observe that without any loss of generality we may and do assume that 
$\xi_1 = \xi_2 = 0$. Further, since $\q$ is bounded and the measures 
$d\Pi_{\nu}$, $\nu \ge -1/2$, are finite, we have
\begin{align*}
\iint 
\frac{d\Pi_{\a+\kappa_1}(u) \, d\Pi_{\b+\kappa_2}(v) }
{ \q^{\a+\b+\xi+1\slash 2}}
            \lesssim 1 + \chi_{\{ \a + \b + \xi + 1/2 >0 \}}
                \iint \frac{d\Pi_{\a+\kappa_1}(u) \, d\Pi_{\b+\kappa_2}(v) }
{ \q^{\a + \b + \xi + 1/2}}.
\end{align*}
	Assuming that $\a + \b + \xi + 1/2 >0$ and
    applying Corollary~\ref{cor:intest3} to the integral against $d\Pi_{\b+\kappa_2}(v)$ with the parameters
    $\nu = \b+\kappa_2$,
    $\gamma = \a + \b+\xi+1/2$,
    $A=1- u \st \svp \simeq 1 $,
    $B= \ct \cvp$,
    we obtain
    \begin{align*}
        \iint \frac{d\Pi_{\a+\kappa_1}(u) \, d\Pi_{\b+\kappa_2}(v) }
{ \q^{\a+\b+\xi+1\slash 2} }
            & \lesssim 1 +
\int \bigg( 
\chi_{ \{ \a + \xi > \kappa_2 \} } 
\frac{1}{(1 - \ct \cvp - u\st \svp)^{\a + \xi - \kappa_2}} \\
            & \qquad \qquad \qquad + \chi_{ \{ \a + \xi = \kappa_2 \} }
                \log^+ \frac{1}{1 - \ct \cvp - u\st \svp} \bigg)\, d\Pi_{\a+\kappa_1}(u) \\
            &\equiv 1 + I_1 + I_2.
    \end{align*}
    
We now analyze $I_1$ and $I_2$ separately. To treat $I_1$ we apply again Corollary~\ref{cor:intest3} specified to
    $\nu=\a+\kappa_1$,
    $\gamma=\a+\xi-\kappa_2$,
    $A=1-\ct \cvp \simeq (\t + \v)^2 $,
    $B= \st \svp$, which leads to
    \begin{align*}
        I_1
            & \lesssim
\chi_{\{\xi - 1/2 < \kappa_1 + \kappa_2\}}
\bigg( \frac{1}{\t + \v} \bigg)^{ 2(\a + \xi - \kappa_2)} 
+ \chi_{\{\xi - 1/2 = \kappa_1 + \kappa_2\}}
\bigg( \frac{1}{\t + \v} \bigg)^{ 2(\a + \kappa_1 + 1/2)} 
\bigg( 1 + \log^+ \frac{\t \v}{|\t - \v|^2} \bigg) \\
& \quad
                + \chi_{\{\xi - 1/2 > \kappa_1 + \kappa_2\}} 
\bigg( \frac{1}{\t + \v} \bigg)^{ 2(\a + \kappa_1 + 1/2)} 
\frac{1}{|\t - \v|^{2(\xi - 1/2 - \kappa_1 - \kappa_2)}} \\
& \equiv J_1 + J_2 + J_3.
    \end{align*}
The required bound for $J_1$ is straightforward, so let us pass to $J_2$. Since the constraint
$\xi - 1/2 = \kappa_1 + \kappa_2$ implies $\xi \ge 1/2$ and 
$ \kappa_1 + 1/2 =  \xi -  \kappa_2$, one can easily check that the conclusion follows (when $\xi > 1/2$ it is convenient to use \eqref{logest}).
Considering $J_3$, in this case $\xi > 1/2$ and we get
\[
J_3 \le
\bigg( \frac{ |\t - \v| }{\t + \v} \bigg)^{2 \kappa_1}
|\t - \v|^{2 \kappa_2}
\bigg( \frac{1}{\t + \v} \bigg)^{ 2(\a + \xi)} 
\bigg( \frac{\t + \v }{|\t - \v|} \bigg)^{2\xi - 1},
\]
which leads to the desired estimate. This finishes the analysis related to $I_1$.

Finally, we deal with $I_2$. The case $\a + \xi = 0$ is straightforward, 
so from now on we assume that $\a + \xi = \kappa_2 > 0$.
To proceed it is convenient to distinguish two cases.

\noindent \textbf{Case 1:} $\a + \kappa_1 = -1/2$. 
Then in $I_2$ we have $\xi = 1/2 + \kappa_1 + \kappa_2 > 1/2$ and using \eqref{logest} with any $\rho$ satisfying $0 < \rho \le (2\xi - 1) \wedge (2\a + 2\xi)$ we infer that
\[
I_2 \lesssim
1 + \log^+ \frac{1}{|\t - \v|} 
\lesssim \frac{1}{|\t - \v|^{\rho}}
\lesssim
\bigg( \frac{1}{\t + \v} \bigg)^{ 2(\a + \xi)} 
\bigg( \frac{\t + \v }{|\t - \v|} \bigg)^{2\xi - 1}.
\]

\noindent \textbf{Case 2:} $\a + \kappa_1 > -1/2$. 
Applying \eqref{logest} with a certain $\rho > 0$ satisfying 
$\rho < (\a + \kappa_1 + 1/2) \wedge (\a + \xi)$ and then 
Corollary~\ref{cor:intest3} specified to
    $\nu=\a+\kappa_1$,
    $\gamma=\rho$,
    $A=1-\ct \cvp \simeq (\t + \v)^2 $,
    $B= \st \svp$,
we get
\[
I_2 \lesssim
\int \frac{d\Pi_{\a+\kappa_1}(u)}{(1 - \ct \cvp - u\st \svp)^{\rho}} 
\simeq
\bigg( \frac{1}{\t + \v} \bigg)^{ 2\rho} 
\lesssim
\bigg( \frac{1}{\t + \v} \bigg)^{ 2(\a + \xi)}.
\]
This finishes the proof of Lemma~\ref{lem:K=0pi4}.
\end{proof}

\begin{lem}\label{lem:**}
Let $\nu, \lambda \in \mathbb{R}$, $\kappa < 1$, $\gamma > -1$ be fixed and such that $\nu + \lambda \ge 0$ and $\gamma + 1 + \nu - \kappa \ge 0$.
Then, excluding the case when
$\nu + \lambda = \gamma + 1 + \nu - \kappa = 0$, we have
\[
\v^{\nu} \int_0^{\pi/4} 
\bigg( \frac{\v}{\t + \v} \bigg)^{\lambda} \frac{\t^{\gamma} \, d\t}{|\t - \v|^{\kappa}}  
\lesssim
1,\qquad \v \in (0,\pi).
\]
\end{lem}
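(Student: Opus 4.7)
The plan is to compare the sizes of $\t$ and $\v$, treating the easy range where $\v$ is bounded away from $0$ separately from the delicate small-$\v$ regime.

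First, for $\v \in [\pi/8,\pi)$ the factor $\v^{\nu}$ is comparable to $1$, and because $\v \ge \pi/8$ while $\t \in (0,\pi/4)$ one has $\t + \v \simeq 1$, so the ratio $\v/(\t+\v)$ is also $\simeq 1$. The whole expression is then controlled by $\int_0^{\pi/4} \t^{\gamma} |\t-\v|^{-\kappa}\, d\t$, which is uniformly bounded thanks to $\gamma > -1$ and $\kappa < 1$.

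The substantive case is $\v \in (0,\pi/8)$, where I would split the range of integration into $(0,\v/2)$, $(\v/2,2\v)$, and $(2\v,\pi/4)$. On the first two subintervals $\t + \v \simeq \v$, hence $(\v/(\t+\v))^{\lambda} \simeq 1$; using $|\t-\v| \simeq \v$ on $(0,\v/2)$, and $\t \simeq \v$ together with $\int_{\v/2}^{2\v}|\t-\v|^{-\kappa}\, d\t \simeq \v^{1-\kappa}$ (finite since $\kappa < 1$) on $(\v/2,2\v)$, each of these contributions is of order $\v^{\nu+\gamma+1-\kappa}$, which is bounded by the second hypothesis. On the third subinterval $\t + \v \simeq \t$ and $|\t-\v| \simeq \t$, so the remaining term reduces to
\[
\v^{\nu+\lambda}\int_{2\v}^{\pi/4} \t^{\gamma-\kappa-\lambda}\, d\t.
\]

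A case analysis on the exponent $\gamma - \kappa - \lambda$ then finishes the argument: if $\gamma - \kappa - \lambda > -1$ the last integral is $O(1)$ and $\v^{\nu+\lambda} \lesssim 1$ thanks to $\nu + \lambda \ge 0$; if $\gamma - \kappa - \lambda < -1$ the integral is dominated by a constant times $\v^{\gamma+1-\kappa-\lambda}$, yielding the uniformly bounded quantity $\v^{\nu+\gamma+1-\kappa}$. The only delicate subcase, and the one I expect to be the main obstacle, is the borderline $\gamma - \kappa - \lambda = -1$: the integral becomes $\log(C/\v)$ and one gets $\v^{\nu+\lambda}\log(C/\v)$. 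In this subcase the two critical quantities coincide, $\gamma + 1 + \nu - \kappa = \nu + \lambda$, and the exclusion in the statement forbids that both vanish simultaneously, forcing $\nu + \lambda > 0$ so that the logarithm is absorbed. This is precisely where the stated exclusion becomes indispensable.
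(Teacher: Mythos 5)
Your proof is correct and is essentially the paper's argument: the paper first substitutes $\theta = \varphi s$ and then splits $\int_0^{\pi/(4\varphi)} (1+s)^{-\lambda} s^{\gamma} |1-s|^{-\kappa}\, ds$ into a bounded local part plus the tail $\int_2^{\pi/(4\varphi)} s^{\gamma-\lambda-\kappa}\, ds$ (active only when $\varphi < \pi/8$), which corresponds exactly to your decomposition into $(0,\varphi/2)$, $(\varphi/2,2\varphi)$, $(2\varphi,\pi/4)$ together with your easy range $\varphi \ge \pi/8$. In particular, your three-way case analysis on the exponent $\gamma - \lambda - \kappa$, including the borderline case $\gamma - \lambda - \kappa = -1$ where $\nu + \lambda = \gamma + 1 + \nu - \kappa$ and the exclusion hypothesis forces $\nu + \lambda > 0$ so the logarithm is absorbed, matches the paper's computation step for step.
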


\begin{proof}
Changing the variable of integration $\t = \v s$ and keeping in mind that $\kappa < 1$ and $\gamma > -1$,
we get
\begin{align*}
\v^{\nu} \int_0^{\pi/4} 
\bigg( \frac{\v}{\t + \v} \bigg)^{\lambda} \frac{\t^{\gamma} \, d\t}{|\t - \v|^{\kappa}} 
& = \v^{\gamma+1+\nu-\kappa} \int_{0}^{\pi/(4\v)} \bigg( \frac{1}{1+s}\bigg)^{\lambda} 
	\frac{s^{\gamma}\, ds}{|1-s|^{\kappa}} \\
& \simeq \v^{\gamma+1+\nu-\kappa} \bigg( 1 + 
	\chi_{\{\v < \pi/8\}}\int_2^{\pi/(4\v)} s^{\gamma - \lambda - \kappa}\, ds \bigg) \\
& \lesssim \v^{\gamma+1+\nu-\kappa} \Big( 1 + \chi_{\{\gamma-\lambda-\kappa+1=0\}} \log\frac{\pi}{\v}
		+ \v^{-\gamma + \lambda + \kappa -1} \Big).
\end{align*}
Clearly, the last expression is bounded uniformly in $\v \in (0,\pi)$, 
in view of the assumptions imposed on the parameters.
\end{proof}

The next result will be needed when dealing with the case $\a \ge -1/2$.

\begin{lem}\label{lem:-1/2}
Let $\a \ge -1/2$ and $\b > -1$ be fixed. 
Consider the kernel $K(\t,\v)$ defined on $(0,\pi/4) \times (0,\pi)$ in the following way.
\begin{itemize}
\item[(a)] For $ \b \ge -1/2$, 
\[
K(\t,\v) = 
\t^{-1} \int_0^1 t \sht \iint 
\frac{\sin\frac{\t - \v}2 + (1-u) \ct \svp}{(\cht - 1 + \q)^{\a + \b + 3}}
\, \pia \, \pib \, dt.
\] 
\item[(b)] For $-1 < \b < -1/2 $, 
\[
K(\t,\v) = 
\t^{-1} \int_0^1 t \sht \iint 
\frac{\sin\frac{\t - \v}2 + (1-u) \ct \svp}{(\cht - 1 + \q)^{\a + \b + 4}}
\, \pia \, \ob \, dt.
\]
\end{itemize}
Then we have
\[
\int_0^{(\pi/4)\wedge (2\v)}  |K(\t,\v)|
\, d\mu_{\ab}(\t) \lesssim 1, \qquad \v \in (0,\pi).
\]
\end{lem}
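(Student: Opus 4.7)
The plan is to handle cases (a) and (b) uniformly by reducing (b) to (a). Indeed, \eqref{smallab} yields $|\Pi_\b(v)|\,dv \simeq |v|\,d\Pi_{\b+1}(v)$, and since $|v|\le 1$ the case (b) integrand is dominated by the case (a) integrand with $\b$ replaced by $\b+1 > 1/2$; the hypothesis of case (a) is then satisfied, so I focus on case (a).

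The first step is to rewrite the numerator via \eqref{iden1} as
\[
\sin\tfrac{\t-\v}{2}+(1-u)\ct\svp = 2\partial_\t\q + (1-v)\st\cvp,
\]
which yields the splitting $K=2K_1+K_2$, where $K_1$ carries $\partial_\t\q$ and $K_2$ carries $(1-v)\st\cvp$ in the numerator. For $K_2$ the factor $\t^{-1}\st \simeq 1$ on $(0,\pi/4)$, and the weight $(1-v)\pib$ is essentially controlled by $d\Pi_{\b+1}(v)$ (directly for $v\in(0,1)$; for $v\in(-1,0)$ one has $\cht-1+\q\gtrsim 1$ on $(0,\pi/4)$, which makes the contribution trivial). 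Consequently
\[
|K_2(\t,\v)|\lesssim \int_0^1 t\, H_t^{\a,\b+1}(\t,\v)\,dt \le K_1^{\a,\b+1}(\t,\v),
\]
the kernel of $(\J^{\a,\b+1})^{-1}$. Since $\boldsymbol{1}$ is an eigenfunction of $\J^{\a,\b+1}$ with strictly positive eigenvalue $\tau_{\a,\b+1}^2=((\a+\b+3)/2)^2$ and since $d\mu_{\ab}(\t)\simeq d\mu_{\a,\b+1}(\t)$ on $(0,\pi/4)$, symmetry of $K_1^{\a,\b+1}$ gives
\[
\int_0^{(\pi/4)\wedge 2\v}|K_2(\t,\v)|\,d\mu_{\ab}(\t) \lesssim \int_0^\pi K_1^{\a,\b+1}(\v,\t)\,d\mu_{\a,\b+1}(\t) = \tau_{\a,\b+1}^{-2},
\]
uniformly in $\v\in(0,\pi)$.

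For $K_1$ I would apply $|\partial_\t\q|\lesssim\sqrt\q$ from \eqref{est4.5} together with $\sht\simeq t$ and $\cht-1\simeq t^2$; evaluating the $t$-integral by Lemma~\ref{lem:intest2} produces $|K_1(\t,\v)|\lesssim \t^{-1}\iint\pia\pib/\q^{\a+\b+1}$. Lemma~\ref{lem:K=0pi4} with $\xi_1=\xi_2=\kappa_1=\kappa_2=0$, $\xi=\tfrac12$ then yields a pointwise bound in terms of $1$, $(\t+\v)^{-2\a-1}$, and logarithmic powers of $(\t+\v)/|\t-\v|$. Integrating this bound against $d\mu_{\ab}(\t)\simeq\t^{2\a+1}\,d\t$ on $(0,(\pi/4)\wedge 2\v)$ is precisely what Lemma~\ref{lem:**} is calibrated for, and will deliver a bound uniform in $\v$.

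The main obstacle will be the endpoint $\a=-\tfrac12$, where the resulting estimate $|K_1|\lesssim \t^{-1}(1+\log^+\frac{1}{|\t-\v|})$ is not integrable against $d\mu_{\ab}\simeq d\t$ near $\t=0$. The remedy is structural: one must retain a cancellation between $K_1$ and $K_2$ on the atoms of $d\Pi_{-1/2}$—for $u=1$ the summand $-(1-v)\st\cvp$ of $2\partial_\t\q|_{u=1}$ exactly cancels the summand $+(1-v)\st\cvp$ of $K_2|_{u=1}$, leaving only $\sin\tfrac{\t-\v}{2}$ with its crucial $|\t-\v|$ factor—and to exploit the restriction $\t\le 2\v$ (so that $\t+\v\simeq\v$). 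Together with the logarithmic gains from Lemma~\ref{lem:K=0pi4}, Lemma~\ref{lem:**} then absorbs the remaining singularities and closes the argument.
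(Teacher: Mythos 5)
Your reduction of (b) to (a) via \eqref{smallab} is exactly the paper's device (there encoded by an auxiliary exponent $\sigma$), and your treatment of $K_2$ is correct and genuinely slicker than the paper's: after recognizing $\sht \iint (\cht-1+\q)^{-(\a+\b+3)}\,\pia\,\pibb$ as a constant multiple of $H_t^{\a,\b+1}(\t,\v)$, the row-sum identity $\int_0^\pi K_1^{\a,\b+1}(\v,\t)\,d\mu_{\a,\b+1}(\t)=\tau_{\a,\b+1}^{-2}$ (from \eqref{iden0}) finishes that piece, where the paper instead grinds through Lemma~\ref{lem:intest2} and Corollary~\ref{cor:intest3}. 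For $\a>-1/2$ your bound on $K_1$ also closes, but not through Lemma~\ref{lem:**} as you claim: converting the logarithm from Lemma~\ref{lem:K=0pi4} into a power $|\t-\v|^{-\eps}$ via \eqref{logest} puts you precisely into the excluded case $\nu+\lambda=\gamma+1+\nu-\kappa=0$ of Lemma~\ref{lem:**}, and rightly so, since that lemma integrates over all of $(0,\pi/4)$, where your pointwise bound genuinely produces $\log(1/\v)$; you must use the restriction $\t\le 2\v$ (so $\t+\v\simeq\v$) and integrate directly, e.g.\ by the substitution $\t=\v s$, which gives $\int_0^2 s^{2\a}\log\frac{e}{|1-s|}\,ds<\infty$. (The paper avoids this issue altogether: it never splits off $\partial_\t \q$, bounds the numerator crudely by $|\t-\v|+(1-u)\v$, and lands at the clean estimate $|K(\t,\v)|\lesssim \t^{-1}\v^{-2\a-1}$, which integrates against $\t^{2\a+1}\,d\t$ over $(0,2\v)$ exactly because $2\a+1>0$.)

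The genuine gap is at the endpoint $\a=-1/2$, where your proposed remedy fails. Recombining $2K_1+K_2$ on the atom $u=1$ merely undoes your own decomposition and restores the original numerator $\sin\frac{\t-\v}{2}$ there, and that atom alone is not integrable: for fixed small $\v$ and $\t\to 0^+$, the inner $(t,v)$-integral of the $u=1$ term is comparable to $|\t-\v|^{-1}\simeq\v^{-1}$ (Lemma~\ref{lem:intest2} followed by Corollary~\ref{cor:intest3} with $A\simeq 1$ and $A-B=1-\st\svp-\ct\cvp\simeq(\t-\v)^2$), while $|\sin\frac{\t-\v}{2}|\simeq\v$, so this contribution has exact size $\t^{-1}$ and $\int_0^{2\v}\t^{-1}\,d\t=\infty$; symmetrically, the $u=-1$ atom is $\simeq +\t^{-1}$. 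Hence no argument that estimates the two atoms separately in absolute value can succeed, and neither the restriction $\t\le 2\v$ nor logarithmic factors from Lemma~\ref{lem:K=0pi4} can repair this. The indispensable idea, absent from your sketch, is a cancellation \emph{between} the atoms $u=1$ and $u=-1$: the paper groups the two terms as $\sin\frac{\t-\v}{2}$ times the \emph{difference} of the reciprocal powers at $u=\pm 1$ --- controlled by \eqref{3.2} together with $|q(\t,\v,1,v)-q(\t,\v,-1,v)|=2\st\svp\simeq \t\v$ and $q(\t,\v,-1,v)\ge 1-\ct\cvp\simeq(\t+\v)^2$ --- plus a remainder whose numerator is $\sin\frac{\t-\v}{2}+\sin\frac{\t+\v}{2}=2\sin\frac{\t}{2}\cos\frac{\v}{2}\simeq\t$. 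Both gains carry the factor $\t$ that neutralizes the $\t^{-1}$ prefactor and yield $|K(\t,\v)|\lesssim\v^{-1}$, which integrates to $O(1)$ over $(0,(\pi/4)\wedge 2\v)$. Without this inter-atom step your plan does not close at $\a=-1/2$.
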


In the proofs of Lemmas~\ref{lem:-1/2} and \ref{lem:T1} we will use the fact that for each fixed $\nu > -1/2$ we have
\begin{equation}\label{largeab}
(1-u) \, d\Pi_{\nu} (u) \simeq d\Pi_{\nu + 1} (u), \qquad u \in (0,1).
\end{equation}

\begin{proof}[Proof of Lemma \ref{lem:-1/2}]
In the reasoning below we assume that $\t \le (\pi/4) \wedge (2 \v)$, if not stated otherwise.
Further, we define an auxiliary constant $\sigma = \sigma(\b)$ which is equal to $0$ if $\b \ge -1/2$ and $1$ if $-1 < \b < -1/2$.
We deal with items (a) and (b) simultaneously, but we consider the cases of
$\a > -1/2$ and $\a = -1/2$ separately.

\noindent \textbf{Case 1:} $\a > -1/2 $. 
By \eqref{smallab} we obtain
\[
|K(\t,\v)| 
\lesssim
\t^{-1} \int_0^1 t^2 \iint 
\frac{|\t - \v| + (1-u) \v }{(t^2 + \q)^{\a + \b + 3 + \sigma}}
\, \pia \, d\Pi_{\b + \sigma}(v) \, dt.
\]
Now applying Lemma~\ref{lem:intest2} 
specified to $\nu = 5/2$, $\gamma = 2(\a + \b + 3 + \sigma)$, $a=\sqrt{\q}$, 
$B = 1$, and then Corollary~\ref{cor:intest3}
to the integral against $d\Pi_{\b + \sigma}(v)$ with
$\nu = \b + \sigma$, $\gamma = \a + \b + 3/2 + \sigma$, 
$A = 1- u \st \svp \simeq 1$, $B=\ct \cvp$,
we see that
\begin{align*}
|K(\t,\v)|
& \lesssim 
\t^{-1}
\iint \frac{|\t - \v| + (1-u) \v }{\q^{\a+\b+3/2 + \sigma}} 
\, d\Pi_{\b + \sigma}(v) \, \pia \\ 
& \simeq
\t^{-1} 
\int \frac{|\t - \v| + (1-u) \v }{(1 - \ct \cvp - u \st \svp)^{\a+ 1}}  
\, \pia .
\end{align*}
To proceed, 
we split the region of integration in the last integral onto the intervals $[-1,0]$ and $[0,1]$, and 
denote the corresponding expressions by $I_{-1}$ and $I_1$, respectively. 
In order to finish the proof of Case 1 it suffices to show that
\[
I_{-1} + I_{1}
\lesssim 
 \t^{-1} \v^{-2\a - 1}.
\] 
Since 
\[
1 - \ct \cvp - u \st \svp \ge 1 - \ct \cvp \simeq (\t + \v)^2, \qquad 
u \in [-1,0], \quad \t,\v \in (0,\pi),
\]
the conclusion for $I_{-1}$ is trivial.
Using \eqref{largeab} and then Corollary~\ref{cor:intest3} twice 
(with $\nu = \a$ or $\nu = \a + 1$ and 
$\gamma = \a + 1$, $A = 1-  \ct \cvp \simeq (\t + \v)^2$, $B=\st \svp$) we get the required estimate for $I_1$.

\noindent \textbf{Case 2:} $\a = -1/2 $.
Computing the integral against $\piu$, applying the triangle inequality and then \eqref{3.2}, we see that
\begin{align*}
& \bigg| \int 
\frac{\sin\frac{\t - \v}2 + (1-u) \ct \svp}{(\cht - 1 + \q)^{\b + 5/2 + \sigma}}
\, \piu
\bigg| \\
& \qquad \simeq
\bigg| \frac{ \sin\frac{\t + \v}2 }{(\cht -1 + q(\t,\v,-1,v))^{\b + 5/2 + \sigma}} 
+ \frac{ \sin\frac{\t - \v}2 }{(\cht -1 + q(\t,\v,1,v))^{\b + 5/2 + \sigma}} 
\bigg| \\
& \qquad \le
\Big|\sin\frac{\t - \v}2 \Big| \bigg| \frac{1}{(\cht -1 + q(\t,\v,1,v))^{\b + 5/2 + \sigma}} 
- \frac{1}{(\cht -1 + q(\t,\v,-1,v))^{\b + 5/2 + \sigma}}  \bigg| \\
& \qquad \qquad +
\frac{1}{(\cht -1 + q(\t,\v,-1,v))^{\b + 5/2 + \sigma}} 
\Big| \sin\frac{\t - \v}2 + \sin\frac{\t + \v}2 \Big| \\
& \qquad \lesssim
 \frac{ |\t - \v| \t \v^{-1}}{ (t^2 + q(\t,\v,1,v) )^{\b + 5/2 + \sigma}}
+
\frac{\t}{ (t^2 + q(\t,\v,-1,v) )^{\b + 5/2 + \sigma}}.
\end{align*}
Combining this with \eqref{smallab}, Lemma~\ref{lem:intest2} (applied with $\nu = 5/2$, $\gamma = 2(\b + 5/2 + \sigma)$, $a=\sqrt{q(\t,\v,\pm 1,v)}$, $B = 1$) and Corollary~\ref{cor:intest3}
(with $\nu = \b + \sigma$,  
$\gamma = \b + 1 + \sigma$, $A = 1 \mp  \st \svp \simeq 1$, $B=\ct \cvp$) we obtain
\begin{align*}
|K(\t,\v)| 
 \lesssim
\int \bigg(
 \frac{ |\t - \v| \, \v^{-1}}{ q(\t,\v,1,v)^{\b+1 + \sigma}} 
+
\frac{1}{ q(\t,\v,-1,v)^{\b+1 + \sigma}}
\bigg) \, d\Pi_{\b + \sigma}(v)
 \simeq 
\v^{-1}, 
\end{align*}
which concludes Case 2, and thus the proof of Lemma~\ref{lem:-1/2}.
\end{proof}

\begin{lem}\label{lem:uintcanc}
Let $\a > -1$ and $\gamma > 0$ be fixed. 
\begin{itemize}
\item[(a)] If $ \a \ge -1/2$, then
\[
\bigg| \int \frac{u \, \pia}{(\cht - 1 + \q)^{\gamma}}
\bigg|
\lesssim
\frac{\t \v}{(\t + \v)^2}
\int \frac{\pia}{(t^2 + \q)^{\gamma}},
\qquad 0<t \le 1, \quad \t,\v \in (0,\pi), \quad v \in [-1,1].
\] 
\item[(b)] If $-1 < \a < -1/2 $, then
\[
\bigg| \int \frac{ \oa }{(\cht - 1 + \q)^{\gamma}}
\bigg|
\lesssim
\frac{\t \v}{(\t + \v)^2}
\int \frac{\piaa}{(t^2 + \q)^{\gamma}},
\qquad 0<t \le 1, \quad \t,\v \in (0,\pi), \quad v \in [-1,1].
\] 
\end{itemize}
\end{lem}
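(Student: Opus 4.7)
\smallskip

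\noindent\textbf{Proof plan.} The key feature to exploit is that in both (a) and (b) the factor in front of $d\Pi_\alpha(u)$ (respectively $du$) is odd in $u$, while the remaining part of the measure and the kernel's dependence on $u$ are controlled by an affine function of $u$. Set $s = \sin\frac{\theta}{2}\sin\frac{\varphi}{2}$ and $A = \cosh\frac{t}{2} - v\cos\frac{\theta}{2}\cos\frac{\varphi}{2}$, so that $\cosh\frac{t}{2} - 1 + \mathfrak{q} = A - us$. Note that $A \ge 1 - \cos\frac{\theta}{2}\cos\frac{\varphi}{2} \simeq (\theta+\varphi)^2$ by \eqref{elem}, that $s \le \theta\varphi/4$ (since $\sin x \le x$), and that $A - us \simeq t^2 + \mathfrak{q}$ for $t \le 1$ by \eqref{comp1}.

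For part (a), I would use the symmetry of $d\Pi_\alpha$ on $[-1,1]$ to rewrite
\[
\int_{-1}^{1} \frac{u\, d\Pi_\alpha(u)}{(A-us)^\gamma}
= \int_0^1 u\Big[\tfrac{1}{(A-us)^\gamma} - \tfrac{1}{(A+us)^\gamma}\Big]\, d\Pi_\alpha(u),
\]
and then invoke \eqref{3.2} (with $x = A-us$, $y = A+us$, so $x\vee y = A+us$, $x\wedge y = A-us$, $|x-y|=2us$) to dominate the bracketed difference by $C\, us/\bigl[(A+us)(A-us)^\gamma\bigr] \le C\, us/[A(A-us)^\gamma]$. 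Bounding $u^2 \le 1$ inside the resulting integral and using $s/A \lesssim \theta\varphi/(\theta+\varphi)^2$ together with $A-us \simeq t^2 + \mathfrak{q}$ yields the claimed inequality.

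For part (b), the analogous manipulation uses that $\Pi_\alpha$ is odd on $(-1,1)$:
\[
\int_{-1}^{1} \frac{\Pi_\alpha(u)\, du}{(A-us)^\gamma}
= \int_0^1 \Pi_\alpha(u)\Big[\tfrac{1}{(A-us)^\gamma} - \tfrac{1}{(A+us)^\gamma}\Big]\, du.
\]
Apply \eqref{3.2} as before, and then convert $|\Pi_\alpha(u)|\, du$ into $u\, d\Pi_{\alpha+1}(u)$ via \eqref{smallab}. After bounding $u^2 \le 1$, symmetrizing back to $(-1,1)$ (which only costs a factor $2$, as $d\Pi_{\alpha+1}$ is even), and using $s/A \lesssim \theta\varphi/(\theta+\varphi)^2$ and $A-us \simeq t^2+\mathfrak{q}$, the desired estimate follows.

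The only mildly delicate step is the extraction of the factor $\theta\varphi/(\theta+\varphi)^2$, which depends on the two simple but crucial inequalities $s \le \theta\varphi/4$ and $A \gtrsim (\theta+\varphi)^2$; everything else is an essentially mechanical application of the Mean Value-type bound \eqref{3.2} and, for (b), of the comparability \eqref{smallab}. No further technical machinery is required.
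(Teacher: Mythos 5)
Your argument is correct and is essentially the paper's own proof: both fold the integral onto $[0,1]$ using the oddness of $u\,\pia$ (resp.\ of $\oa$, converted via \eqref{smallab}), bound the resulting difference of kernels at $u$ and $-u$ by \eqref{3.2}, and extract the factor $\t\v/(\t+\v)^2$ from $|q(\t,\v,u,v)-q(\t,\v,-u,v)|=2u\st\svp\lesssim u\,\t\v$ together with $q(\t,\v,-u,v)\ge 1-\ct\cvp\simeq(\t+\v)^2$. The only cosmetic difference is your affine parametrization $\cht-1+\q=A-us$, which repackages the same estimates (and, incidentally, no factor of $2$ is needed when enlarging the final integral from $[0,1]$ to $[-1,1]$, since the integrand is nonnegative).
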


\begin{proof}
We will treat both cases simultaneously. Since the measures $u \, \pia$, $ \a \ge -1/2$, and $\oa$, $-1 < \a < -1/2$, are odd in $[-1,1]$, do not possess any atom at $0$ and have finite total variation, we get
\begin{align*}
\bigg| \int \frac{u \, \pia}{(\cht - 1 + \q)^{\gamma}}
\bigg|
& \le 
\int_{[0,1]} \bigg|
\frac{1}{(\cht - 1 + \q)^{\gamma}} 
- \frac{1}{(\cht - 1 + q(\t,\v,-u,v) )^{\gamma}}
\bigg| \, \pia, \\
\bigg| \int \frac{\oa}{(\cht - 1 + \q)^{\gamma}}
\bigg|
& \lesssim 
\int_{[0,1]} \bigg|
\frac{1}{(\cht - 1 + \q)^{\gamma}} 
- \frac{1}{(\cht - 1 + q(\t,\v,-u,v) )^{\gamma}}
\bigg| \, \piaa;
\end{align*}
to obtain the second estimate we used also \eqref{smallab}. Now the conclusion is an immediate consequence of \eqref{3.2} and the relations
\[
q(\t,\v,-u,v) \ge 1 - \ct \cvp \simeq (\t + \v)^2, 
\qquad \t,\v \in (0,\pi), \quad u \in [0,1], \quad v \in [-1,1].
\]
\end{proof}

Finally, in the proof of Lemma~\ref{lem:T1} we will frequently use the estimates, 
see \eqref{est4.5} and \eqref{q_for},
\begin{align}\label{est5}
|\partial_{\t} \q| \lesssim \sqrt{\q}, \qquad 
|\partial_u \q| \lesssim \t \v, \qquad 
|\partial_v \q| \lesssim 1, \qquad
|\partial_{\t} \partial_u \q| \lesssim \v, \qquad
|\partial_{\t} \partial_v \q| \lesssim \t, 
\end{align}
holding uniformly in $\t,\v \in (0,\pi)$ and $u, v \in [-1,1]$.

\begin{proof}[Proof of Lemma \ref{lem:T1}]
By Lemmas \ref{lem:Step1} and \ref{lem:diffK} we see that $T_1^{\ab}$ is an integral operator with the kernel
\[
K(\t,\v) = \cot\frac{\t}2 \int_0^\infty \partial_{\t} H_t^{\ab} (\t,\v) \, tdt,
\qquad \t,\v \in (0,\pi), \quad \t \ne \v;
\]
note that $\tau_{\ab} = 0$ is also included.
We first focus on the positive part of the lemma. Using sequently Lemma~\ref{lem:thetaJP}, Lemma~\ref{lem:intest2} (applied with $\nu = 5/2$, $\gamma = 3$, $a=|\t-\v|$ and $B=1$) and \eqref{logest} with $\rho=1/2$ we obtain
\begin{align*}
|K(\t,\v)|
& \lesssim
1 + (\pi - \t) \int_0^1 \frac{t^2\,dt}{(t + \pi - \t + \pi - \v)^{2\b+1} 
(t + |\t - \v|)^3 } 
\lesssim
1 + (\pi - \t)^{-2\b-1} \int_0^1 \frac{t^2\,dt}{(t + |\t - \v|)^3 } \\
& \lesssim
1 + (\pi - \t)^{-2\b-1} |\t-\v|^{-1/2}, 
\qquad \t \in (3\pi/4, \pi), \quad  \v \in (0,\pi).
\end{align*}
Now the desired conclusion is a direct consequence of Lemma~\ref{lem:Schur}.

We pass to proving the negative part. We split the region of integration in the definition of $K(\t,\v)$ onto $(0,1)$ and $(1,\infty)$ denoting the resulting expressions by $K_0 (\t,\v)$ and $K_{\infty}(\t,\v)$, respectively. We first show that the operator $T_\infty$ associated with the kernel $K_{\infty}(\t,\v)$ is $L^1(d\mu_{\ab})$-bounded.
Using the series definition of $H_t^{\ab} (\t,\v)$, see \eqref{d0}, and proceeding as in the proof of \cite[Lemma 3.8]{NSS}, we obtain
\[
|\partial_{\t} H_t^{\ab} (\t,\v)|
\lesssim
\sin \t \, e^{-ct}, \qquad t \ge 1, \quad \t,\v \in (0,\pi),
\]
for some $c=c_{\ab}>0$. This combined with Lemma~\ref{lem:Schur} gives us the desired property for $T_\infty$.

It remains to deal with the operator $T_0$ associated to the kernel $K_0(\t,\v)$.
We will show that $T_0$ is not bounded from $L^1(d\mu_{\ab})$ to $L^1((0,\pi/4),d\mu_{\ab})$.
This will finish the proof.
It is convenient to distinguish four cases depending on whether each of the parameters of type $\ab$ is less than $-1/2$ or not. 

\noindent \textbf{Case 1:} $\ab \ge -1/2$.
Using \eqref{qderi}
and the decomposition \eqref{iden1} we arrive at
\begin{align*}
K_0 (\t,\v) &= c \cot\frac{\t}2 \int_0^1 t \sht \iint 
\frac{\big[ \sin\frac{\t-\v}2  + (1-u) \ct \svp \big] - (1-v) \st \cvp}
{ (\cht - 1 + \q)^{\a + \b + 3} } \, \pia \, \pib \, dt \\
&\equiv c L_1(\t,\v) - c L_2(\t,\v), \qquad \t,\v \in (0,\pi), \quad \t \ne \v,
\end{align*}
with some non-zero constant $c=c_{\ab}$. 
We claim that $L_2 (\t,\v)$ produces a bounded operator from $L^1(d\mu_{\ab})$ to $L^1((0,\pi/4),d\mu_{\ab})$. Indeed, applying Lemma~\ref{lem:intest2} 
(specified to $\nu = 5/2$, $\gamma = 2(\a + \b + 3)$, $a=\sqrt{\q}$, 
$B = 1$), 
splitting the integration in $v$ into $[-1,0]$ and $[0,1]$, and then using the estimates
\begin{align}\label{est6}
1- u \st \svp \simeq 1, \qquad u\in [-1,1], 
\quad \t \in (0,\pi/4), \quad \v \in (0,\pi),
\end{align}
and \eqref{largeab},
we see that
\begin{align*}
|L_2(\t,\v)|
& \lesssim
\int_0^1 t^2 \iint \frac{(1-v)\, \pia \, \pib}{(t^2 + \q)^{\a+\b+3}} \, dt
\simeq
\iint \frac{(1-v)\, \pia \, \pib}{\q^{\a+\b+3/2}} \\
& \lesssim
1 + \int_{[0,1]} \int \frac{ \pia \, \pibb}{\q^{\a+\b+3/2}},
\qquad \t \in (0,\pi/4), \quad \v \in (0,\pi), \quad \t \ne \v;
\end{align*}
note that here $\b = -1/2$ is also included.
Now Lemma~\ref{lem:K=0pi4} (specified to 
$\xi_1 = \kappa_1 = \kappa_2 = \xi = 0$, $\xi_2 = 1$) leads to 
\[
|L_2(\t,\v)| 
\lesssim
(\t + \v)^{-2\a} + |\t -\v|^{-1/2}
\lesssim
(\t + \v)^{-2\a - 1} |\t -\v|^{-1/2}, 
\qquad \t \in (0,\pi/4), \quad \v \in (0,\pi), \quad \t \ne \v.
\]
This, in view of Lemma~\ref{lem:Schur} and Lemma~\ref{lem:**} 
(applied with $\gamma = \lambda = - \nu = 2\a + 1$, $\kappa = 1/2$), gives the asserted property for the operator connected with $L_2 (\t,\v)$.

We now focus on $L_1(\t,\v)$ and show that it produces an unbounded operator from $L^1(d\mu_{\ab})$
to $L^1((0,\pi/4),d\mu_{\ab})$. 
By combining Lemma~\ref{lem:-1/2} (a) with Lemma~\ref{lem:Schur} we know that $\chi_{ \{ \t \le 2\v \} } L_1(\t,\v)$ produces a bounded operator from $\Leb1$ to $\Lebr1$.
Since $\chi_{ \{ \t > 2\v \} } L_1(\t,\v) \ge 0$, in order to finish Case 1 it suffices to show that,
see Lemma \ref{lem:Schur},
\begin{align}\label{iden2}
\essup_{\v \in (0,\pi/8)} \int_{2\v}^{\pi/4} L_1(\t,\v) \, \t^{2\a+ 1} \, d\t
 = \infty.
\end{align}
Using Lemma~\ref{lem:intest2} (taken with $\nu = 5/2$, $\gamma = 2( \a + \b + 3)$, $a=\sqrt{\q}$, $B = 1$) and Corollary~\ref{cor:intest3} twice (first to the integral against $\pib$ with $\nu = \b$, $\gamma = \a + \b + 3/2$, $A = 1 - u \st \svp \simeq 1$, $B=\ct \cvp$, and then to the resulting integral against $\pia$ 
with $\nu = \a$, $\gamma = \a + 1$, $A = 1 - \ct \cvp \simeq (\t + \v)^2$, $B=\st \svp$) we obtain
\begin{align*}
\chi_{ \{ \pi/4 \ge \t > 2\v \} } L_1(\t,\v) 
& \gtrsim
\chi_{ \{ \pi/4 \ge \t > 2\v \} } \, \t^{-1}
\int_0^1 t^2 \iint 
 \frac{ \t - \v }{ (t^2 + \q)^{\a + \b+ 3}} 
\, \pia \, \pib \, dt \\
& \simeq 
\chi_{ \{ \pi/4 \ge \t > 2\v \} } \,
\iint \frac{\pib \, \pia}{  \q^{\a + \b+ 3/2}}  \\
& \simeq 
\chi_{ \{ \pi/4 \ge \t > 2\v \} } \, 
\int \frac{\pia}{  (1 - \ct \cvp - u \st \svp)^{\a + 1}}  
\simeq 
\chi_{ \{ \pi/4 \ge \t > 2\v \} } \, \t^{-2\a - 2}.
\end{align*}
This confirms \eqref{iden2} and finishes the reasoning justifying Lemma~\ref{lem:T1} for $\ab \ge -1/2$.

\noindent \textbf{Case 2:} $-1 < \a < -1/2 \le \b$.
Decompose $K_0(\t,\v) = L_1(\t,\v) + L_2(\t,\v) + L_3(\t,\v)$, where $L_j(\t,\v)$ corresponds to the term with constant $C^j_{\ab}$ in \eqref{qderii}, $j=1,2,3$ (observe that $C^j_{\ab} \ne 0$).
We first ensure that $L_1(\t,\v)$ and $L_2(\t,\v)$ are associated with bounded operators from $\Leb1$ to $\Lebr1$. 
Using Lemma~\ref{lem:uintcanc}~(b) (to $L_2(\t,\v)$ with $\gamma = \a + \b + 3$) and then \eqref{est5} and \eqref{smallab}, we get
\[
|L_1(\t,\v)| + |L_2(\t,\v)|
\lesssim
\int_0^1 t^2 \iint \bigg[ \frac{\v \, \sqrt{ \q } }{(t^2 + \q)^{\a+\b+4}}
+ \frac{\v^2}{(\t + \v)^2} 
\frac{1 }{(t^2 + \q)^{\a+\b+3}}
\bigg] 
\, \piaa \, \pib \, dt .
\]
Then an application of Lemma~\ref{lem:intest2} (specified to $\nu = 5/2$,  $a=\sqrt{\q}$, $B = 1$ and  $\gamma = 2( \a + \b + 4)$ or 
$\gamma = 2( \a + \b + 3)$) and then 
Lemma~\ref{lem:K=0pi4} (choosing $\xi_1 = 1$, 
$\kappa_1 = \xi_2 = \kappa_2 = 0$ and $\xi = 1/2$ or $\xi = 0$)
leads to
\begin{align*}
|L_1(\t,\v)| + |L_2(\t,\v)|
& \lesssim
\iint \bigg[ \frac{\v }{ \q^{\a+\b+2}}
+ \frac{\v^2}{(\t + \v)^2} 
\frac{1 }{\q^{\a+\b+3/2}}
\bigg] 
\, \piaa \, \pib \\
& \lesssim
\frac{\v}{(\t + \v)^{2\a + 3}}
\bigg( \frac{\t + \v}{|\t - \v|} \bigg)^{1/2}
+
\frac{\v^2}{ (\t + \v)^{2\a + 4} }
\lesssim
\frac{\v}{(\t + \v)^{2\a + 5/2}}
\frac{1}{|\t - \v|^{1/2}},
\end{align*}
provided that $\t \in (0,\pi/4)$, $\v \in (0,\pi)$, $\t \ne \v$.
This, with the aid of Lemma~\ref{lem:Schur} and Lemma~\ref{lem:**} (taken with $\gamma = 2\a + 1$, $\lambda = 2\a + 5/2$, $\nu = -2\a - 3/2$, $\kappa = 1/2$), finishes the analysis concerning $L_1(\t,\v)$ and $L_2(\t,\v)$.

It remains to check that $L_3(\t,\v)$ defines an unbounded operator from $L^1(d\mu_{\ab})$ to
$L^1((0,\pi/4),d\mu_{\ab})$.
Since
\begin{align}\label{iden4}
\partial_{\t} \q = -\frac{1}2 u \ct \svp + \frac{1}2 v \st \cvp, 
\end{align}
we consider the kernels $J_k(\t,\v)$, $k=-1,0,1$, given by
\begin{align*}
& J_0(\t,\v)  = 
\cot\frac{\t}2  \ct \svp \int_0^1 t \sht \iint 
\frac{u \, \piu \, \pib}
{ (\cht - 1 + \q)^{\a + \b + 3} } \, dt, \\
&  
\ct \cvp  \int_0^1 t \sht \iint 
\frac{v \, \piu \, \pib}
{ (\cht - 1 + \q)^{\a + \b + 3} } \, dt \equiv J_{-1}(\t,\v) + J_1(\t,\v),
\end{align*}
where $J_{-1}(\t,\v)$ and $J_1(\t,\v)$ correspond to the integration in $v$ restricted to $[-1,0]$ and $[0,1]$, respectively. 
We will show that the operators associated with $J_0(\t,\v)$ and $J_{-1}(\t,\v)$ are bounded from $\Leb1$ to $\Lebr1$, whereas the one connected with $J_1(\t,\v)$ is unbounded. 

By \eqref{est6} the required property for $J_{-1}(\t,\v)$ is straightforward.
Next we focus on $J_0(\t,\v)$.
Combining Lemma~\ref{lem:uintcanc}~(a) (specified to $\gamma = \a + \b + 3$) with Lemma~\ref{lem:intest2} (applied with $\nu = 5/2$, $\gamma = 2(\a + \b + 3)$, $a=\sqrt{\q}$, $B = 1$) and 
Lemma~\ref{lem:K=0pi4} (with $\xi_1 = -\a - 1/2$, 
$\kappa_1 = \xi_2 = \kappa_2 = 0$, $\xi = \a + 3/2 > 1/2$) we infer that
\begin{align*}
|J_0(\t,\v)| & \lesssim 
\frac{\v^2}{(\t + \v)^2}  \iint 
\frac{ \piu \, \pib}
{ \q^{\a + \b + 3/2} } 
 \lesssim
\frac{\v^2}{(\t + \v)^2}
\frac{1}{|\t - \v|^{2\a + 2}}, \quad \, 
\t \in (0,\pi/4), \quad \v \in (0,\pi), \quad \t \ne \v.
\end{align*}
Now the conclusion for $J_0(\t,\v)$ is a direct consequence of Lemma~\ref{lem:Schur} and Lemma~\ref{lem:**} (specified to $\gamma = 2\a + 1$, $\lambda = 2$, $\nu = 0$ and $\kappa = 2\a + 2$).

Finally, we deal with $J_{1}(\t,\v)$. Since the integrand in the definition of $J_{1}(\t,\v)$ is non-negative, in view of Lemma~\ref{lem:Schur} it is sufficient to show that
\begin{align}\label{iden3}
\essup_{\v \in (0,\pi/8)} \int_0^{\pi/4} J_1(\t,\v) \, \t^{2\a+ 1} \, d\t
 = \infty.
\end{align}
Restricting the integration in $v$ to the interval $[1/2,1]$, using Lemma~\ref{lem:intest2} (specified to $\nu = 5/2$, $\gamma = 2(\a + \b + 3)$, $a=\sqrt{\q}$, $B = 1$) and then integrating in $u$, we see that
for $\t,\v \in (0,\pi/4)$, $\t \ne \v$, one has
\begin{align*}
J_1(\t,\v)
& \gtrsim
\int_0^1 t^2 \int_{[1/2,1]}  \int
\frac{ \piu \, \pib }{(t^2 + \q)^{\a+\b+3}} \, dt 
\simeq 
\int_{[1/2,1]}  \int
\frac{ \piu \, \pib }{ \q^{\a+\b+3/2}} \\
& \simeq 
\int_{[1/2,1]}  
\frac{  \pib }{ (1 - \st \svp - v \ct \cvp)^{\a+\b+3/2}}
\simeq 
\int  
\frac{  \pib }{ (1 - \st \svp - v \ct \cvp)^{\a+\b+3/2}};
\end{align*}
here we also used the fact that the essential contribution to the last integral comes from integration over $[1/2,1]$. Applying now Corollary~\ref{cor:intest3}
(with $\nu = \b$,  
$\gamma = \a + \b + 3/2$, $A = 1 - \st \svp \simeq 1$, $B=\ct \cvp$) we get
\[
\int_0^{\pi/4} J_1(\t,\v) \, \t^{2\a+ 1} \, d\t
\gtrsim
\int_0^{\pi/4} \frac{\t^{2\a+ 1} \, d\t}{|\t - \v|^{2\a + 2}} 
\gtrsim
\int_{2\v}^{\pi/4} \t^{-1} \, d\t 
=
\log \frac{\pi}{8 \v}, \qquad \v \in (0,\pi/8), 
\]
which confirms \eqref{iden3} and completes the case $-1 < \a < -1/2 \le \b$.

\noindent \textbf{Case 3:} $-1 < \b < -1/2 \le \a$.
From \cite[Proposition 2.3 (iii)]{NSS} we get
\begin{align*}
        \partial_{\t} {H}_t^{\ab}(\t,\v)
            = &  \; C_{\ab}^1 \sht \iint \frac{\partial_\t \q \, \partial_v \q}{(\cht - 1 + \q)^{\a+\b+4}} \, \pia \, \ob \\ 
            & + C_{\ab}^2 \sht \iint \frac{\partial_\t \partial_v \q }{(\cht - 1 + \q)^{\a+\b+3}} \, \pia \, \ob \\ 
            & + C_{\ab}^3 \sht \iint \frac{\partial_\t \q }{(\cht - 1 + \q)^{\a+\b+3}} \, \pia \, \piv, 
\end{align*}
where $C_{\ab}^j \ne 0$, $j=1,2,3$.
We denote by $L_j(\t,\v)$, $j=1,2,3$, the corresponding components of $K_0(\t,\v)$. 

We first show that the operators emerging from $L_2(\t,\v)$ and $L_3(\t,\v)$ are bounded from $\Leb1$ to $\Lebr1$. 
Indeed, applying  Lemma~\ref{lem:uintcanc} (a) (to the component of $L_3(\t,\v)$ connected with the first term in the decomposition \eqref{iden4} of $\partial_{\t} \q$) and also \eqref{est5} together with \eqref{smallab}, we obtain
\begin{align*}
|L_2(\t,\v)| + |L_3(\t,\v)|
& \lesssim \int_0^1 t^2
\iint 
\frac{\pia \, \pibb }{(t^2 + \q)^{\a+\b+3}}
\, dt  
+  \bigg( \frac{\v^2}{(\t + \v)^2} + 1 \bigg)\int_0^1 t^2
\iint 
\frac{\pia \, \piv }{(t^2 + \q)^{\a+\b+3}}
\, dt \\
& \simeq
\sum_{R=0,1} \int_0^1 t^2
\iint 
\frac{\pia \, \piR }{(t^2 + \q)^{\a+\b+3}}
 \, dt.
\end{align*}
Making use of Lemma~\ref{lem:intest2} 
(specified to $\nu = 5/2$, $\gamma = 2(\a + \b + 3)$, $a=\sqrt{\q}$, $B = 1$) and then Lemma~\ref{lem:K=0pi4}
(with $\xi_1 = \kappa_1 = \kappa_2 = 0$ and  
$ \xi_2 = - \b -1/2$, $\xi = \b + 3/2 > 1/2$ if $R=0$, 
and $ \xi_2 = 1$, $\xi = 0$ if $R=1$) yields
\begin{align*}
|L_2(\t,\v)| + |L_3(\t,\v)|
 \lesssim 
\sum_{R=0,1} 
\iint 
\frac{\pia \, \piR }{\q^{\a+\b+3/2}}
 \lesssim 
\frac{1}{(\t + \v )^{2\a + 1}} \, \frac{1}{|\t - \v|^{2\b + 2}},
\end{align*}
for $\t \in (0,\pi/4)$, $\v \in (0,\pi)$, $\t \ne \v$.
This, in view of Lemma~\ref{lem:Schur} and Lemma~\ref{lem:**} 
(specified to $\gamma = \lambda = - \nu = 2\a + 1$ and $\kappa = 2\b + 2$), gives the asserted property for $L_2(\t,\v)$ and $L_3(\t,\v)$.

It remains to investigate $L_1(\t,\v)$ and prove that it defines an unbounded operator from $L^1(d\mu_{\ab})$ to $L^1((0,\pi/4),d\mu_{\ab})$. 
Observe that, up to a non-zero multiplicative constant, $L_1(\t,\v)$ is equal to, see \eqref{iden1},
\begin{align*}
& \cot\frac{\t}2 \ct \cvp \int_0^1 
t \sht \iint 
\frac{\big[ \sin\frac{\t-\v}2  + (1-u) \ct \svp \big] - (1-v) \st \cvp}
{ (\cht - 1 + \q)^{\a + \b + 4} } \, \pia \, \ob \, dt. 
\end{align*}
This expression splits into two terms according to the main difference in the numerator of the fraction
under the double integral.
We denote by $J_{-1}(\t,\v) $ and $ J_1(\t,\v)$ the first of these terms with the integration in $v$ restricted to $[-1,0]$ and $[0,1]$, respectively. Further, let $J_2(\t,\v)$ stand for the second term.
We will prove that $\chi_{ \{ \t \le 2\v  \} } ( J_{-1}(\t,\v) + J_1(\t,\v) )$,
$ \chi_{ \{ \t > 2\v  \} } J_{-1}(\t,\v) $ and $J_2(\t,\v)$ define bounded operators from $\Leb1$ to $\Lebr1$, whereas 
$\chi_{ \{ \t > 2\v  \} } J_1(\t,\v)$
corresponds to an unbounded operator between those spaces.

Using (b) of Lemma~\ref{lem:-1/2} and \eqref{est6}, respectively, the asserted property for the first two kernels follows.
We now focus on $J_2(\t,\v)$. Splitting the integration in $v$ into intervals $[-1,0]$, $[0,1]$, and then using \eqref{est6} to the first term and the estimates \eqref{smallab}, \eqref{largeab} to the second one, we obtain
\begin{align*}
|J_2(\t,\v)| 
& \lesssim 
1 + 
\int_0^1 t^2
\iint 
\frac{ \pia \, d\Pi_{\b + 2}(v) }{(t^2 + \q)^{\a+\b+4}}
 \, dt.
\end{align*}
This together with Lemma~\ref{lem:intest2}
(specified to $\nu = 5/2$, $\gamma = 2(\a + \b + 4)$, $a=\sqrt{\q}$, $B = 1$) and Lemma~\ref{lem:K=0pi4}
(choosing $\xi_1 = \kappa_1 = \kappa_2 = 0$,  
$ \xi_2 = 2$ and $\xi = 0$) leads to
\[
|J_2(\t,\v)| \lesssim
(\t + \v)^{-2 \a} + |\t - \v|^{-1/2}
\lesssim
(\t + \v)^{-2 \a - 1}  |\t - \v|^{-1/2},
\qquad \t \in (0,\pi/4), \quad \v \in (0,\pi), \quad \t \ne \v.
\]
Now the conclusion follows from Lemma~\ref{lem:Schur} and Lemma~\ref{lem:**} 
(with $\gamma = \lambda = - \nu = 2\a + 1$ and $\kappa = 1/2$). 

Finally, we consider $\chi_{ \{ \t > 2\v \} } J_1(\t,\v)$.
Since the integrand in the definition of $\chi_{ \{ \t > 2\v \} } J_1(\t,\v)$ is non-positive, in view of Lemma~\ref{lem:Schur} it is enough to ensure that
\begin{align}\label{iden5}
\essup_{\v \in (0,\pi/8)} \int_{2\v}^{\pi/4} |J_1(\t,\v)| \, \t^{2\a+ 1} \, d\t
 = \infty.
\end{align}
Restricting the integration in $v$ to $[1/2, 1]$ and taking into account \eqref{smallab} we obtain
\[
\chi_{ \{ \pi/4 \ge \t > 2\v \} } | J_1(\t,\v) | 
\gtrsim
\chi_{ \{ \pi/4 \ge \t > 2\v \} } \, \t^{-1}
\int_0^1 t^2 \int_{[1/2,1]} \int
 \frac{ \t - \v }{ (t^2 + \q)^{\a + \b+ 4}} 
\, \pia \, \pibb \, dt .
\]
Since the last expression is comparable with a similar one with no restriction in $v$, an application of
Lemma~\ref{lem:intest2} (specified to $\nu = 5/2$, $\gamma = 2( \a + \b + 4)$, $a=\sqrt{\q}$, $B = 1$) and Corollary~\ref{cor:intest3} twice (first to the integral with respect to $\pibb$ with parameters $\nu = \b + 1$, $\gamma = \a + \b + 5/2$, $A = 1 - u \st \svp \simeq 1$, $B=\ct \cvp$ and then to the resulting integral with respect to $\pia$ with $\nu = \a$, $\gamma = \a + 1$, $A = 1 - \ct \cvp \simeq (\t + \v)^2$, $B=\st \svp$) gives
\begin{align*}
\chi_{ \{ \pi/4 \ge \t > 2\v \} } | J_1(\t,\v) | 
& \gtrsim
\chi_{ \{ \pi/4 \ge \t > 2\v \} } \,
\iint \frac{\pibb \, \pia}{  \q^{\a + \b+ 5/2}}  \\
& \simeq 
\chi_{ \{ \pi/4 \ge \t > 2\v \} } \, 
\int \frac{\pia}{  (1 - \ct \cvp - u \st \svp)^{\a + 1}}  
\simeq 
\chi_{ \{ \pi/4 \ge \t > 2\v \} } \, \t^{-2\a - 2},
\end{align*}
which implies \eqref{iden5}. This completes the reasoning justifying Lemma~\ref{lem:T1} for 
$-1 < \b < -1/2 \le \a$.

\noindent \textbf{Case 4:} $-1 < \ab < -1/2$.
We decompose $K_0(\t,\v)$ into $7$ parts according to the formula \eqref{qderiv} 
(note that the constants $C^j_{\ab}$, $j=1,\ldots,7$, appearing there are non-zero)
and denote them by $L_j(\t,\v)$, $j=1,\ldots,7$, respectively.
We first deal with $L_j(\t,\v)$, $j\ne 5,7$. We will show that these kernels produce bounded operators from $\Leb1$ to $\Lebr1$.
Using Lemma~\ref{lem:uintcanc} (b) (when it comes to $L_2(\t,\v)$ and $L_4(\t,\v)$) and \eqref{est5} and \eqref{smallab}, we see that they can be estimated simultaneously by
$$
	 \t^{-1} \int_0^1 t^2 \!\!
\sum_{\substack{K,R = 0,1 \\ K+R > 0 }} 
\iint \bigg(
\frac{ \chi_{ \{ K=1 \} } \, \t  \v \, \sqrt{\q} }{(t^2 + \q)^{\a+\b+4 + R}}
+
\frac{(\t  \v)^K}{ (\t + \v)^{2K} } 
 \frac{ \chi_{ \{ K=1 \} } \v + \chi_{ \{ R=1 \} } \t^{1+K} \v^K }
{(t^2 + \q)^{\a+\b+2 + K + R}}\,
\bigg) \piK \, \piR \, dt.
$$
Since 
$\t^K( \chi_{ \{ K=1 \} } \v + \chi_{ \{ R=1 \} } \t^{1+K} \v^K ) \lesssim \t \v^K$, $K=0,1$, an application of Lemma~\ref{lem:intest2} 
(with $\nu = 5/2$, $a=\sqrt{\q}$, $B = 1$ and $\gamma = 2(\a + \b + 4 + R)$ or $\gamma = 2(\a + \b + 2 + K + R)$) gives further bound by
\[
\frac{1}{|\t - \v|^{\a + 1}} + 
\sum_{\substack{K,R = 0,1 \\ K+R > 0 }} 
\iint \bigg(
\frac{ \chi_{ \{ K=1 \} } \, \v }{ \q^{\a+\b+2 + R}}
+
\frac{ \v^{2K} }{ (\t + \v)^{2K} } 
 \frac{ \chi_{ \{ \a + \b + 1/2 + K + R > 0 \} } }
{ \q^{\a+\b+ 1/2 + K + R} }
\bigg) \, \piK \, \piR;
\]
here we used also the relation $\q \gtrsim (\t - \v)^2$ and \eqref{logest} with $\rho = \a + 1$. Next, using Lemma~\ref{lem:K=0pi4} to the first term under the sum above
(specified to $\xi_1 = 1$, $\kappa_1 = \kappa_2 = 0$ and  
$ \xi_2 = 1$, $\xi = 1/2$ if $R=1$, and
$ \xi_2 = - \b -1/2$, $\xi = \b + 1 < 1/2$ if $R=0$) and also to the second term
(taken with $\xi_1 = \xi_2 = 1$, $\kappa_1 = \kappa_2 = \xi = 0$ if $K = R = 1$ and  $\xi_1 = \kappa_1 = 1/2$, $\xi_2 = - \b -1/2 $, $\kappa_2 = 0$, 
$\xi = \b + 1 < 1/2$ if $K=1$, $R=0$
and finally $\xi_1 = -\a - 1/2$, $\kappa_1 = 0$, $\xi_2 = \kappa_2 = 1/2$, 
$\xi = \a + 1 < 1/2$ if $K=0$, $R=1$), we infer that the expression in question is controlled by
\begin{align*}
& \frac{1}{|\t - \v|^{\a + 1}} 
+ 
\frac{ \v }{ (\t + \v)^{2\a + 3} } 
\bigg( \frac{ \t + \v }{ |\t - \v| } \bigg)^{1/2} 
+ 
\frac{ \v }{ (\t + \v)^{2\a + 2\b + 4} }
+
\frac{ \v^2  }{ (\t + \v)^{2\a + 4} } \\
& \qquad \lesssim
\frac{1}{|\t - \v|^{\a + 1}}
+
\frac{ \v }{ (\t + \v)^{2\a + 5/2} }
\frac{ 1 }{ |\t - \v|^{1/2} } 
, \qquad \t \in (0,\pi/4), \quad \v \in (0,\pi), \quad \t \ne \v ;
\end{align*}
here we used also \eqref{logest}.
This, in view of Lemma~\ref{lem:Schur} and Lemma~\ref{lem:**}
(applied with $\gamma = 2\a + 1$ and $\lambda = \nu = 0$, $\kappa = \a + 1$ or $\lambda = 2\a + 5/2$, $\nu = -2\a - 3/2$, $\kappa = 1/2$),
gives the desired conclusion for $L_j(\t,\v)$, $j \ne 5,7$. 

Next we show that $L_7(\t,\v)$ also produces a bounded operator from $\Leb1$ to $\Lebr1$.
Taking into account \eqref{iden4} and applying Lemma~\ref{lem:uintcanc} (a) (to the component of $L_7(\t,\v)$ connected with the first term in \eqref{iden4}) we obtain
\[
|L_7(\t,\v)| \lesssim
 \bigg(
\frac{  \, \v^2 }{(\t + \v)^2} + 1 \bigg) \int_0^1 t^2
\iint 
\frac{\piu \, \piv }{(t^2 + \q)^{\a+\b+3 }} \, dt.
\]
Using now Lemma~\ref{lem:intest2} 
(with $\nu = 5/2$, $\gamma = 2(\a + \b + 3)$, $a=\sqrt{\q}$, $B = 1$) 
and then integrating in $u$ and $v$ we get 
\[
|L_7(\t,\v)| 
\lesssim
\frac{1}{|\t - \v|^{\a + 1}}
+
\chi_{ \{ \a + \b + 3/2  > 0 \} }
\frac{1}{|\t - \v|^{2\a + 2\b + 3}},
\qquad \t \in (0,\pi/4), \quad \v \in (0,\pi), \quad \t \ne \v ,
\]
which with the aid of Lemma~\ref{lem:Schur} and Lemma~\ref{lem:**} 
(specified to $\gamma = 2\a + 1$, $\lambda =  \nu = 0$ and $\kappa = \a + 1$ or $\kappa = 2\a + 2\b + 3$) leads to the desired property of $L_7(\t,\v)$.

Finally, we consider $L_5(\t,\v)$. We denote by $J(\t,\v)$ the component of $L_5(\t,\v)$ corresponding to the first term in \eqref{iden4} and by $J_{-1}(\t,\v)$ and $J_1(\t,\v)$ the remaining parts of $L_5(\t,\v)$ with integration in $v$ restricted to $[-1,0]$ and $[0,1]$, respectively.
We aim at showing that $J_{-1}(\t,\v)$ and $J(\t,\v)$, in contrast with $J_1(\t,\v)$, stand behind bounded operators from $\Leb1$ to $\Lebr1$.
The conclusion for $J_{-1}(\t,\v)$ is a straightforward consequence of \eqref{est6}. We pass to analyzing $J(\t,\v)$.
Using sequently Lemma~\ref{lem:uintcanc}~(a), \eqref{smallab}, Lemma~\ref{lem:intest2}
(specified to $\nu = 5/2$, $\gamma = 2(\a + \b + 4)$, $a=\sqrt{\q}$, $B = 1$) 
and Lemma~\ref{lem:K=0pi4}
(with $\xi_1 = -\a -1/2$, $ \xi_2 = 1$, $\kappa_1 = \kappa_2 = 0$, 
$\xi = \a + 3/2 > 1/2$) we get
\begin{align*}
|J(\t,\v)|
& \lesssim
\frac{ \v^2 }{(\t + \v)^2}
\int_0^1 t^2 \iint
\frac{\piu \, \pibb }{(t^2 + \q)^{\a+\b+ 4}} \, dt
\simeq
\frac{ \v^2 }{(\t + \v)^2}
\iint
\frac{\piu \, \pibb }{ \q^{\a+\b+ 5/2}} \\
& \lesssim
\frac{ \v^2 }{(\t + \v)^2}
\frac{ 1 }{|\t - \v|^{2\a + 2}},
\qquad \t \in (0,\pi/4), \quad \v \in (0,\pi), \quad \t \ne \v .
\end{align*}
Now Lemma~\ref{lem:Schur} and Lemma~\ref{lem:**} 
(with $\gamma = 2\a + 1$, $\lambda = 2$, $ \nu = 0$ and $\kappa = 2\a + 2$) come into play,
and the conclusion follows. 

It remains to check that $J_1(\t,\v)$ gives rise to an operator that is not bounded from 
$L^1(d\mu_{\ab})$ to $L^1((0,\pi/4),d\mu_{\ab})$.
Since the integrand related to $J_1(\t,\v)$ is non-positive, it suffices to prove that
\begin{align}\label{iden6}
\essup_{\v \in (0,\pi/8)} \int_{0}^{\pi/4} |J_1(\t,\v)| \, \t^{2\a+ 1} \, d\t
 = \infty.
\end{align}
Restricting the integration in $v$ to the interval $[1/2,1]$ and applying \eqref{smallab} we obtain
\[
| J_1(\t,\v) | 
 \gtrsim
\int_0^1 t^2 \int_{[1/2,1]} \int 
 \frac{ \piu \, \pibb  }{ (t^2 + \q)^{\a + \b+ 4}} \, dt,
\qquad \t,\v \in (0,\pi/4), \quad \t \ne \v.
\]
Observe that the last expression is comparable to a similar one with integration in $v$ over the whole interval $[-1,1]$.
Using this, Lemma~\ref{lem:intest2} 
(specified to $\nu = 5/2$, $\gamma = 2(\a + \b + 4)$, $a=\sqrt{\q}$, $B = 1$),
Corollary~\ref{cor:intest3} (applied to the integral against $\pibb$ with 
$\nu = \b + 1$, $\gamma = \a + \b + 5/2$, $A= 1 - u \st \svp \simeq 1$, 
$B = \ct \cvp$) and then integrating in $u$, we see that
\begin{align*}
| J_1(\t,\v) | 
\gtrsim
\iint \frac{ \pibb \, \piu  }{  \q^{\a + \b+ 5/2}}  
 \simeq  
\int \frac{ \piu  }{  (1 - \ct \cvp - u \st \svp)^{\a + 1}} 
 \simeq 
|\t - \v|^{-2\a - 2},
\end{align*}
for $\t \in (0,\pi/4)$, $\v \in (0,\pi/8)$, $\t \ne \v$.
Consequently,
\[
\int_0^{\pi/4} | J_1(\t,\v) | \, \t^{2\a + 1} \, d\t 
\gtrsim
\int_{2\v}^{\pi/4} \t^{-1} \, d\t = \log \frac{\pi}{8 \v}, 
\qquad \v \in (0,\pi/8),
\]
which implies \eqref{iden6}.
This completes showing the case of $-1 < \ab < -1/2$ in Lemma \ref{lem:T1}.

The proof of Lemma~\ref{lem:T1} is at last finished.
\end{proof}


\end{document}